\lstdefinelanguage{Sage}[]{Python}
{morekeywords={True,False,sage,singular},
sensitive=true}
\definecolor{dblackcolor}{rgb}{0.0,0.0,0.0}
\definecolor{dbluecolor}{rgb}{.01,.02,0.7}
\definecolor{dredcolor}{rgb}{0.8,0,0}
\definecolor{dgraycolor}{rgb}{0.30, 0.3,0.30}
\tikzstyle{square} = [shape=regular polygon, regular polygon sides=4, minimum size=1cm, draw, inner sep=0, anchor=south, fill=gray!30]
\tikzstyle{squared} = [shape=regular polygon, regular polygon sides=4, minimum size=1cm, draw, inner sep=0, anchor=south, fill=gray!60]
\newtheorem{theorem}{Theorem}[section]
\newtheorem{definition}[theorem]{Definition}
\newtheorem{lemma}[theorem]{Lemma}
\newtheorem{coro}[theorem]{Corollary}
\newtheorem{example}[theorem]{Example}
\newtheorem{prop}[theorem]{Proposition}
\newcommand{\Z}{{\mathbb{Z}}}
\newcommand{\Q}{{\mathbb{Q}}}
\newcommand{\N}{{\mathbb{N}}}
\newcommand{\Gal}{{\mathrm{Gal}}}
\newcommand{\Nm}{{\mathrm{Nm}}}
\providecommand{\keywords}[1]
{
  \small	
  \textbf{\textit{Keywords---}} #1
}
\begin{document}

\title{An Elementary Proof of the  Minimal Euclidean Function on the Gaussian Integers}
\author{Hester Graves}
\affil{Center for Computing Sciences/IDA}
\date{\today}

\maketitle

\abstract{Every Euclidean domain $R$ has a minimal Euclidean function, $\phi_R$.  
A companion paper \cite{Graves} introduced a formula to compute $\phi_{\Z[i]}$.
It is the first formula for a minimal Euclidean function for 
the ring of integers of a non-trivial number field.
It did so by studying the geometry of the set $B_n = \left \{ \sum_{j=0}^n v_j (1+i)^j : v_j \in \{0, \pm 1, \pm i \} \right \}$ and 
then applied  Lenstra's result that $\phi_{\Z[i]}^{-1}([0,n]) = B_n$
 to provide a short proof of $\phi_{\Z[i]}$.  
Lenstra's proof requires s substantial algebra background.
This paper uses the new geometry of the sets $B_n$ to prove the formula for $\phi_{\Z[i]}$ without using Lenstra's result.
The new geometric method lets us prove Lenstra's theorem using only elementary methods. 
We then apply the new formula to answer Pierre Samuel's open question: what is the size of $\phi_{\Z[i]}^{-1}(n)$?.
Appendices provide a table of answers and the associated SAGE code. \\
\keywords{number theory, Euclidean algorithm, Euclidean function, Euclidean domain, Gaussian integers, quadratic number fields}

\section{Introduction}\label{introduction}
This paper presents the first formula that computes the minimal Euclidean function for a non-trivial number field.  
Theorem \ref{formula_statement} gives a formula for $\phi_{\Z[i]}$, the minimal Euclidean function for $\Z[i]$.
The ring $\Z[i]$, also called the Gaussian integers or the Gaussians, is the ring of integers of $\Q(i)$.  
Calculating the minimal Euclidean function for any number field's ring of integers (other than $\Z$, the ring of integers of $\Q$) 
has been an open problem since Motzkin introduced minimal Euclidean functions in 1941. 
Pierre Samuel explicitly mentioned being unable to generally enumerate the pre-images of $\phi_{\Z[i]}^{-1}$ in 1971 \cite{Samuel}.
Section~\ref{history} provides the question's history.

To the author's surprise, $\phi_{\Z[i]}$ is easy to compute, and can be done by hand for small examples.
Sections~\ref{expansions} and \ref{Main Result} study the geometry of the sets $\phi_{\Z[i]}^{-1}([0,n])$. 
Samuel calculated $|\phi_{\Z[i]}^{-1}(n)|$ for $n \in [0,8]$.
Section~\ref{Application} shows how to quickly compute $\phi_{\Z[i]}^{-1} (9)$,
and gives a closed form expression for $|\phi_{\Z[i]}^{-1}|$ for $n\geq 2$.
Appendix~\ref{Table} is a table of these values.   
The section also compares our new formula with the previous recursive methods to compute $\phi_{\Z[i]}^{-1}([0,n])$;
Appendix~\ref{Code} provides code for those older techniques.

A companion paper \cite{Graves} gives a short proof of Theorem \ref{formula_statement}, using a result of Lenstra.
Lenstra's proof requires comfort with a range of ideas in algebra.  
We use our new geometric description of the sets $B_n$ to provide a shorter, alternative proof of Lenstra's theorem.
This paper, therefore, provides a self-contained, elementary proof, at the expense of the brevity of \cite{Graves}.

The only background knowledge required is familiarity with complex conjugation and quotients in rings.
The proof focuses on the geometry of the sets $\phi_{\Z[i]}^{-1}([0,n])$, 
so readers will want to study the figures carefully, and pay particular attention to Figure \ref{Fig:triangle}.  

\subsection{History}\label{history}
Answering a question of Zariski, Motzkin showed in 1949 that every Euclidean domain $R$ has a unique minimal Euclidean function $\phi_R$.  
His paper only gave one example in a number field: he showed that $\phi_{\Z}(x)$ is the 
number of digits in the binary expansion of $|x|$, or $\lfloor \log_2(|x|) \rfloor$ \cite{Motzkin}.  
Following his lead, mathematicians searched fruitlessly for minimal Euclidean  functions for number fields' rings of integers.  
Pierre Samuel calculated $\phi_{\Z[i]}^{-1}(n)$ and
$\phi_{\Z[\sqrt{2}]}^{-1}(n)$ for $n\leq 8$
\footnote{Conscientious readers who check the original source will note that Samuel claimed that he went up to $n=9$.  
He used a slightly different definition, so that $\phi_{\Z[i]}(0) \neq \phi_{\Z[i]}(1)$.  
This footnoted sentence is his result, translated to our notation using Definition~\ref{construction}.},
  and said in his survey `About Euclidean Rings' that the sets were `very irregular (\cite{Samuel}, p. 290).'
He explicitly expressed interest in computing the sets, and included their various sizes.

In his monograph ``Lectures in Number Fields\cite{Lenstra}," Lenstra showed on page 49 that  
\begin{equation}\label{1+i expansion}
\phi_{\Z[i]}^{-1}([0,n]) = \left \{ \sum_{j=0}^n v_j (1+i)^j : v_j \in \{0, \pm 1, \pm i \} \right \}.
\end{equation}
Note that Lenstra, unlike Motzkin in his study of $\Z$, provided an algebraic description of the preimages of $\phi_{\Z[i]}$,
rather than a function.  That may seem like a distinction without a difference, but in the Gaussians, it is not 
easy to determine the least $n$ for which $a+bi$ can be written as a $(1+i)$-ary expansion of length $\leq n$.
Section \ref{expansions} expands on some of these challenges.  
Using Lenstra's result to compute $\phi_{\Z[i]}^{-1}(9)$ (where Samuel stopped his computation) would require computing 
$v (1+i)^9 + w$ for all possible $v_j \in \{ \pm 1, \pm i\}$ and $w \in \phi_{\Z[i]}^{-1}([0,8])$.
One would then remove any elements that appear in $\phi_{\Z[i]}^{-1}([0,8])$.  
An explicit formula allows us to directly compute the elements, without the repetition required by the recursive method outlined above. 
We see in Section~\ref{Application} that Theorem~\ref{pre-image_cardinality} calculates the cardinality of $\phi_{\Z[i]}^{-1}(n)$ for $n \geq 1$
 without enumerating all of the sets' elements.

In \cite{Graves}, the author explicitly computed $\phi_{\Z[i]}$, using the sequence $w_n$.  
We define $B_n = \left \{ \sum_{j=0}^n v_j (1+i)^j : v_j \in \{0, \pm 1, \pm i \} \right \}$, 
the Gaussians' $(1+i)$-ary analogue of the set of integers with binary expansions of length $\leq n$.
That paper gives a formula to find the least $n$ such that a Gaussian integer is an element of $B_n$.
It then uses Lenstra's theorem (Equation \ref{1+i expansion}) to show that $\phi_{\Z[i]}$ is given by that formula.
\begin{definition} For $k \geq 0$, $w_{2k} = 3 \cdot 2^k$ and $w_{2k +1} = 4 \cdot 2^k$.
\end{definition}
We denote $b$ divides $a$ by $a \mid b$.  When $b^ c \mid a$ but $b^{c+1} \nmid a$, we write $b^c \parallel a$.

\begin{theorem}\label{formula_statement} (Theorem 1.2 in \cite{Graves})
Suppose that $a+bi \in \Z[i] \setminus 0$, that $2^j \parallel a+bi$, and that $n$ is the least integer such that 
$\max \left ( \left | \frac{a}{2^j} \right |, \left | \frac{b}{2^j} \right | \right ) + 2 \leq w_n$.
If $\left | \frac{a}{2^j} \right | + \left | \frac{b}{2^j} \right | + 3 \leq w_{n+1} $, then 
$\phi_{Z[i]}(a+bi) = n + 2j$.  Otherwise, $\phi_{Z[i]}(a+bi) = n + 2j +1$.
\end{theorem}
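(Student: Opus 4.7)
The plan is to invoke Lenstra's theorem $\phi_{\Z[i]}^{-1}([0,m]) = B_m$ (established in Section~\ref{Main Result} by elementary geometric methods) and thereby translate the claim into the equivalent statement that $a+bi \in B_{n+2j}$ (respectively $B_{n+2j+1}$) and not in the previous level. The proof then splits into a reduction step that peels off the $2^j$ factor, followed by a direct geometric check on the primitive quotient.

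For the reduction, because $2 = -i(1+i)^2$, the hypothesis $2^j \parallel a+bi$ is equivalent to $(1+i)^{2j} \mid a+bi$ with $(1+i)^{2j+2} \nmid a+bi$. By reducing any candidate expansion $\sum_\ell v_\ell (1+i)^\ell$ modulo successive powers of $1+i$, one shows that $v_0 = \dots = v_{2j-1} = 0$ is forced; hence $a+bi \in B_m$ iff $(a+bi)/(1+i)^{2j} \in B_{m-2j}$. Absorbing the unit $(-i)^j$ into the digit choices, this reduces the problem to the case $j=0$: given a primitive $a+bi$, show that the least $n$ with $a+bi \in B_n$ is exactly the one produced by the theorem's two inequalities.

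For the geometric step, I would invoke the description of $B_n$ built up in Sections~\ref{expansions} and \ref{Main Result}: up to low-order boundary effects, $B_n$ is the set of Gaussian integers satisfying both the \emph{square} inequality $\max(|a|,|b|)+2 \leq w_n$ and the \emph{diamond} inequality $|a|+|b|+3 \leq w_{n+1}$. The theorem's first condition picks out the minimal $n$ forcing the (weaker) square inequality; the secondary inequality then decides whether the diamond inequality also holds at level $n$, in which case $\phi_{\Z[i]}(a+bi)=n+2j$, or fails, in which case a short verification using the recursion $w_{n+2}=2w_n$ shows that both inequalities hold at level $n+1$ and gives $\phi_{\Z[i]}(a+bi)=n+2j+1$.

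The main obstacle is the geometric characterization of $B_n$ itself. One must understand how multiplication by $1+i$ (a rotation by $\pi/4$ combined with scaling by $\sqrt{2}$) interacts with adding a digit in $\{0,\pm1,\pm i\}$, so that $B_n$ is carved out as an octagonal lattice region whose defining half-planes alternate between tight and slack as $n$ changes parity, matching the two regimes $w_{2k}=3\cdot 2^k$ and $w_{2k+1}=4\cdot 2^k$. A secondary, smaller obstacle is the uniqueness argument for the bottom digits in a $(1+i)$-ary expansion that powers the reduction step; this should follow from a clean induction on $j$ using the residue class of the sum modulo $(1+i)^{2j}$.
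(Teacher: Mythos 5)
Your proposal is correct and follows essentially the paper's own route: it combines the elementary proof that $\phi_{\Z[i]}^{-1}([0,m]) = B_m$ (Theorem \ref{main_result}) with the decomposition $B_m \setminus 0 = \coprod_{j} 2^j S_{m-2j}$ (Theorem \ref{octo_union}, imported from \cite{Graves}), reduces to the primitive quotient, and checks the two inequalities, using $w_{n+2} = 2w_n$ to see that both hold at level $n+1$ when the diamond inequality fails at level $n$. Note only that your digit-peeling reduction and the ``octagonal characterization'' you flag as the main obstacle are exactly what Theorem \ref{octo_union} packages (and your phrase ``up to low-order boundary effects'' should be read as: for elements with $2 \nmid \gcd(a,b)$ the characterization is exact, which is all the reduction requires), so no work beyond what the paper cites is needed.
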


The formula's proof in \cite{Graves} provided a geometric description of the sets $B_n$.
Section~\ref{expansions} defines the geometry used in \cite{Graves}, and uses it to study our sets $B_n$. 
Sections~\ref{expansions} and \ref{Main Result} then show that $\phi_{\Z[i]}^{-1}([0,n]) = B_n$ and thus
\[\phi_{\Z[i]}^{-1}([0,n]) \setminus 0 =    \displaystyle \coprod_{j=0}^{\lfloor n/2 \rfloor } ( a + bi: 2^j \parallel a + bi, 
\max(|a|, |b|) \leq w_n - 2^{j+1},  |a| + |b| \leq w_{n+1} - 3 \cdot 2^j \}, \]
thereby bypassing Lenstra's proof.  
We do this because Lenstra's proof requires an extensive knowledge of algebra, while this paper's arguments are elementary.

As a consequence of Theorem \ref{octo_union} in \cite{Graves} and Section~\ref{expansions}, we answer Samuel's question
by  characterizing the sets $\phi_{\Z[i]}^{-1}(n)$ and then providing a closed-form formula computing $|\phi_{\Z[i]}^{-1}(n)|$.

\begin{theorem}\label{pre-images} For $k \geq 1$,\\
$\begin{array}{ccc}
\phi_{\Z[i]}^{-1}(2k +1) & = 
&\displaystyle \coprod _{j=0}^{k}
\left ( 
a+bi:
\begin{array}{c}
2^j \parallel (a+bi); |a|, |b|\leq w_n - 2^{j+1}; \\
|a| + |b| \leq w_{n+1} - 3 \cdot 2^j ,\\
\text{ and either } \max(|a|, |b|) > w_{n-1} - 2^{j+1} \\
\text{ or } |a| + |b| > w_{n} - 3 \cdot 2^j 
\end{array}
\right ) \\
\text{and} && \\
\phi_{\Z[i]}^{-1}(2k)  & = 
&\begin{array}{c}
 \{\pm 2^k, \pm 2^k i \} \cup \\
 \displaystyle \coprod _{j=0}^{k-1}
\left ( 
a+bi:
\begin{array}{c}2^j \parallel (a+bi); |a|, |b|\leq w_n - 2^{j+1};\\
  |a| + |b| \leq w_{n+1} - 3 \cdot 2^j ,\\
\text{ and either } \max(|a|, |b|) > w_{n-1} - 2^{j+1} \\
\text{ or } |a| + |b| > w_{n} - 3 \cdot 2^j 
\end{array}
\right ).
\end{array}
\end{array}$
\end{theorem}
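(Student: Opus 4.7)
The plan is to deduce Theorem~\ref{pre-images} directly from the stratified description
\[
\phi_{\Z[i]}^{-1}([0,n]) \setminus \{0\} \;=\; \coprod_{j=0}^{\lfloor n/2 \rfloor} S_j(n),
\]
with $S_j(n) := \{a+bi : 2^j \parallel a+bi,\ \max(|a|,|b|) \leq w_n - 2^{j+1},\ |a|+|b| \leq w_{n+1} - 3 \cdot 2^j\}$, which is the displayed consequence of Sections~\ref{expansions} and \ref{Main Result} announced just above the definition of $w_n$. Since preimages under $\phi_{\Z[i]}$ of $[0,n]$ are nested, $\phi_{\Z[i]}^{-1}(n) = \phi_{\Z[i]}^{-1}([0,n]) \setminus \phi_{\Z[i]}^{-1}([0,n-1])$, and since the strata $S_j$ are indexed by the mutually exclusive condition $2^j \parallel a+bi$, the set difference can be carried out stratum-by-stratum.

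For every index $j$ appearing in both ranges, an element of $S_j(n) \setminus S_j(n-1)$ is precisely one that satisfies the two stratum-$j$ inequalities for $n$ but violates at least one of the two corresponding inequalities for $n-1$, namely $\max(|a|,|b|) > w_{n-1} - 2^{j+1}$ or $|a|+|b| > w_n - 3 \cdot 2^j$. This reproduces the ``either/or'' clause in Theorem~\ref{pre-images}. The monotonicity $w_{n-1} \leq w_n \leq w_{n+1}$ (immediate from the definition of $w_n$) guarantees that the $n$-level constraints imply the looser $(n-1)$-level constraints wherever both would be required, so the ``violation'' description is well-posed.

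It remains to handle the index $j$ that appears at level $n$ but not at level $n-1$. When $n = 2k+1$ is odd, $\lfloor n/2 \rfloor = k = \lfloor (n-1)/2 \rfloor$, no new stratum is born, and the formula for $\phi_{\Z[i]}^{-1}(2k+1)$ falls out at once. When $n = 2k$ is even, $\lfloor n/2 \rfloor = k$ while $\lfloor (n-1)/2 \rfloor = k-1$, so the entire stratum $S_k(2k)$ must be added. Substituting $w_{2k} = 3 \cdot 2^k$ and $w_{2k+1} = 4 \cdot 2^k$ into the defining inequalities collapses them to $2^k \parallel a+bi$, $\max(|a|,|b|) \leq 2^k$, $|a|+|b| \leq 2^k$; these force $(a,b)$ to be one of $(\pm 2^k, 0)$ or $(0, \pm 2^k)$, giving the extra four-element set $\{\pm 2^k, \pm 2^k i\}$ in the even-case formula.

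Once the stratification of $\phi_{\Z[i]}^{-1}([0,n])$ from Sections~\ref{expansions}--\ref{Main Result} is in hand, the argument is really just careful bookkeeping, and I do not expect any serious obstacle; the one point demanding care is that the parity of $n$ controls whether a new top stratum appears, and only for even $n$ does it contribute the isolated four points $\pm 2^k$, $\pm 2^k i$.
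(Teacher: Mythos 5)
Your proposal is correct and follows essentially the same route as the paper: invoke $\phi_{\Z[i]}^{-1}([0,n]) = B_n$ together with the decomposition $B_n \setminus 0 = \coprod_j 2^j S_{n-2j}$, subtract level $n-1$ from level $n$ stratum-by-stratum (the strata being disjoint via the $2^j \parallel (a+bi)$ condition), and observe that only for even $n = 2k$ does the new top stratum $2^k S_0 = \{\pm 2^k, \pm 2^k i\}$ appear. One phrasing slip: it is the $(n-1)$-level inequalities that imply the looser $n$-level ones (so $S_j(n-1) \subseteq S_j(n)$), not the reverse as written, but this does not affect the argument.
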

We use this description to find the following expressions.
 \begin{theorem}\label{size_of_sets}
 For $k\geq 1$, 
 \begin{align*}
 |\phi_{\Z[i]}^{-1} (2k)| &= 14 \cdot 4^k - 14 \cdot 2^k + 4\\
 \intertext{ and} 
 |\phi_{\Z[i]}^{-1}(2k +1)| &= 28 \cdot 4^k - 20 \cdot 2^k + 4.
 \end{align*}
 \end{theorem}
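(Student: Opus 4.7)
The plan is to apply the explicit disjoint union in Theorem~\ref{pre-images} directly: count the lattice points in each piece and sum. The engine is a counting formula for the octagon
\[ \{(x,y) \in \Z^2 : |x|, |y| \leq A, \ |x| + |y| \leq B \} \]
with $A \leq B \leq 2A$. Its cardinality is $(2A+1)^2 - 4\binom{2A - B + 1}{2}$, since each of the four corners of the bounding square $[-A,A]^2$ is truncated by a right triangle with integer leg $2A - B$, and for $B \geq A$ these four corner triangles are pairwise disjoint.

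First I would record and verify this octagon lemma at the two extremes (the diamond $B = A$ and the full square $B = 2A$). Next, for each $j$, I would count the Gaussian integers $a+bi$ with $2^j \parallel (a+bi)$ lying in the octagonal annulus from Theorem~\ref{pre-images}. Substituting $a = 2^j a'$, $b = 2^j b'$ replaces the condition $2^j \mid (a+bi)$ by $(a',b') \in \Z^2$ and rescales the outer octagon to $A = w_n/2^j - 2$, $B = w_{n+1}/2^j - 3$ (with an analogous inner octagon governed by $w_{n-1}$ and $w_n$). The extra condition $2^{j+1} \nmid (a+bi)$ is removed by inclusion-exclusion: the count with $2^j \parallel$ equals the count with $2^j \mid$ minus the count with $2^{j+1} \mid$, and the latter comes from the same formula with $2^{j+1}$ in place of $2^j$.

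With $w_{2k} = 3 \cdot 2^k$ and $w_{2k+1} = 4 \cdot 2^k$, each piece becomes a quadratic polynomial in the variable $2^{k-j}$. Summing over $j = 0, \ldots, k$ (respectively $k-1$) reduces to two geometric series, in powers of $4$ and of $2$, producing a closed form of the shape $\alpha \cdot 4^k + \beta \cdot 2^k + \gamma$. For even $n = 2k$, the four exceptional points $\{\pm 2^k, \pm 2^k i\}$ in Theorem~\ref{pre-images} contribute directly to the constant term, and a single algebraic simplification yields the coefficients $14, -14, 4$ or $28, -20, 4$.

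The main obstacle is organizational rather than conceptual: there are four octagon counts to track per $j$, and for indices near $j = k$ the rescaled octagons degenerate to small diamonds (or the single point $\{0\}$), so the boundary cases need a careful sanity check to confirm the octagon formula still applies. Once those base cases are pinned down, the geometric sums collapse cleanly and the closed-form cardinalities fall out.
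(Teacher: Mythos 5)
Your plan is correct in outline and would reach the stated formulas, but it is organized differently from the paper. The paper never counts the annuli of Theorem~\ref{pre-images} directly: it first computes $|S_n|$ by an explicit quadrant-and-parity count, then sums the disjoint union of Theorem~\ref{octo_union} to get closed forms for $|A_{\Z[i],n}|=|B_n|$ (Theorem~\ref{pre-image_cardinality}), and finally obtains Theorem~\ref{size_of_sets} as the difference $|A_{\Z[i],n}|-|A_{\Z[i],n-1}|$. You instead count $\phi_{\Z[i]}^{-1}(n)=B_n\setminus B_{n-1}$ piece by piece straight from Theorem~\ref{pre-images}, using the closed octagon count $(2A+1)^2-4\binom{2A-B+1}{2}$ together with a second inclusion--exclusion to impose $2^j\parallel(a+bi)$. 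Both routes are elementary lattice-point counts that end in geometric series; yours has the small advantage that the constant terms cancel within each annulus, so no linear-in-$k$ terms ever appear, while the paper's detour through $|B_n|$ buys the formula of Theorem~\ref{pre-image_cardinality}, which is wanted in Section~\ref{Application} in its own right.

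One step needs more care than your sketch suggests, and it is not confined to the degenerate pieces near $j=k$. The number of points of the $j$-th outer octagon that are divisible by $2^{j+1}$ is \emph{not} given by substituting $2^{j+1}$ for $2^j$ in $A=w_n/2^j-2$, $B=w_{n+1}/2^j-3$: dividing the actual bounds $w_n-2^{j+1}$ and $w_{n+1}-3\cdot 2^j$ by $2^{j+1}$ and flooring gives the octagon with $A'=w_n/2^{j+1}-1$ and $B'=w_{n+1}/2^{j+1}-2$, which is strictly larger than the octagon with parameters $w_n/2^{j+1}-2$ and $w_{n+1}/2^{j+1}-3$. (This off-by-one is exactly why the sets $B_n$ are not nested octagons; compare Corollary~\ref{one_up}.) Taking the substitution literally drops boundary points at every $j$, and the lost points are numerous enough to corrupt the coefficient of $2^k$ in the final answer, so the mistake would not be caught only at the degenerate indices. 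With the floored bounds $A',B'$, the same octagon formula applies (one checks $A'\le B'\le 2A'$), the $j=k$ pieces come out right automatically, the four points $\{\pm 2^k,\pm 2^k i\}$ supply the constant $4$ in the even case as you say, and the geometric sums do collapse to $14\cdot 4^k-14\cdot 2^k+4$ and $28\cdot 4^k-20\cdot 2^k+4$.
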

Appendix \ref{Table} is a table of the values of $|\phi_{\Z[i]}^{-1} (n)|$.

\section{Preliminaries}

\subsection{Motzkin's Lemma and minimal Euclidean functions}

A domain $R$ is \textbf{Euclidean} if there exists a \textbf{Euclidean function} $f$,
$f: R \setminus 0 \rightarrow \N,$ such that if $a \in R$ and $b \in R \setminus 0$, then there exist some $q,r \in R$ such that $a =qb +r$, 
where either $r=0$ or $f(r) < f(b)$.\footnote{Motzkin and Lenstra both define $f: R \setminus 0 \rightarrow W$, where $W$ is a 
well-ordered set with $\N$ as an initial segment.}

We can restate this standard definition of Euclidean functions in terms of cosets, by saying that $f:R \setminus 0 \rightarrow \N$ is a Euclidean function if, for all $b \in R \setminus 0$,
 every non-zero coset $[a] \in R/b$ has a representative $r$ (i.e., $a \equiv r \pmod {b}$) such that $f(r) < f(b)$.  This reformulation paves the way for Motzkin's Lemma.

\begin{definition}\label{construction} \textbf{Motzkin Sets} \cite{Motzkin} Given a domain $R$, define 
\begin{align*}
A_{R,0} &: = 0 \cup R^{\times} \\
A_{R,j} &: = A_{R, j-1} \cup \{ \beta :A_{R,j-1} \twoheadrightarrow R/\beta \}, \text{ and}\\ 
A_R & := \bigcup_{j=0}^{\infty} A_{R,j},
\end{align*}
where $R^{\times}$ is the multiplicative group of $R$ and $G \twoheadrightarrow R/ \beta$ if every $[a] \in R/\beta$ has a representative $r \in G$.
\end{definition}

Studying $A_{\Z}$ clarifies this cumbersome definition.  The elements $[0]$, $[1]$, and $[2]$ of $\Z / 3\Z$ 
can be represented as $[0]$, $[1]$, and $[-1]$, as 
$2 \equiv -1 \pmod{3}$. 

\begin{example}\label{example_in_Z}  When $R = \Z$, our Motzkin sets are 
\begin{align*}
A_{\Z,0} & = \{0, \pm 1\} \\
A_{\Z,1} & = \{0, \pm 1, \pm 2, \pm 3\} \\
A_{\Z,2} & = \{0, \pm 1, \pm 2, \pm 3, \pm 4, \pm 5, \pm 6, \pm 7\} \\
A_{\Z,n} & = \{0, \pm 1, \ldots , \pm (2^{n+1} -1)\} \\
A_{\Z} & = \Z.
\end{align*}
\end{example}

Motzkin' sets allow us to present his foundational lemma.

\begin{lemma}(Motzkin's Lemma \cite{Motzkin}) \label{Motzkins_Lemma}  A domain $R$ is Euclidean if and only if $R = A_R$.  Furthermore, if $R$ is Euclidean, if $F$ is the set of all Euclidean functions on $R$, and if 
\begin{align*}
\phi_R &: R \setminus 0 \rightarrow \N,\\ 
\phi_R(a) &:= j \text{ if }a \in A_{R,j} \setminus A_{R, j-1},
\end{align*}
then $\phi_R(a) = \displaystyle \min_{f\in F} f(a)$ and $\phi_R$ is itself a Euclidean function.
\end{lemma}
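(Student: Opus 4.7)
The plan is to prove the biconditional and the minimality statement together by establishing two directions: first that if $R = A_R$ then the level function $\phi_R$ is itself a Euclidean function (so $R$ is Euclidean), and second that if $R$ admits \emph{any} Euclidean function $f$, then $\phi_R(a) \leq f(a)$ for every nonzero $a$, which forces both $R = A_R$ and the minimality of $\phi_R$ within $F$.

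For the easy direction, assume $R = A_R$, so that $\phi_R$ is well-defined on $R \setminus 0$. Given $b \in R \setminus 0$ with $\phi_R(b) = j$, I would split on $j$. If $j = 0$, then $b$ is a unit and every $a \in R$ satisfies $a = (ab^{-1})b + 0$, so the Euclidean condition holds trivially. If $j \geq 1$, then by definition of $A_{R,j}$ and the fact that $b \notin A_{R,j-1}$, the membership $b \in A_{R,j}$ comes only from $A_{R,j-1} \twoheadrightarrow R/b$. So every coset of $R/b$ has a representative $r \in A_{R,j-1}$, meaning either $r = 0$ or $\phi_R(r) \leq j - 1 < j = \phi_R(b)$. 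This is exactly the Euclidean property for $\phi_R$.

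For the harder direction, assume $R$ is Euclidean with some Euclidean function $f \in F$. I claim that $\phi_R(a) \leq f(a)$ for all $a \in R \setminus 0$, proved by strong induction on the value $f(a) \in \N$. For the base case, let $N_0 = \min_{x \neq 0} f(x)$; if $f(a) = N_0$, then for any $c \in R$ the equation $c = qa + r$ forces $r = 0$ (otherwise $f(r) < N_0$, contradicting minimality), so $a \mid c$ for every $c$, making $a$ a unit and hence $\phi_R(a) = 0 \leq N_0$. For the inductive step with $f(a) = n > N_0$, take any $c \in R$ and write $c = qa + r$ with $r = 0$ or $f(r) < n$; in the nontrivial case the induction hypothesis gives $\phi_R(r) \leq f(r) < n$, so $r \in A_{R,n-1}$, and $0 \in A_{R,0} \subseteq A_{R,n-1}$ handles the other case. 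Thus $A_{R,n-1} \twoheadrightarrow R/a$, forcing $a \in A_{R,n}$ and $\phi_R(a) \leq n$.

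Combining both directions closes the loop: the inequality $\phi_R \leq f$ shows every $a \in R \setminus 0$ lies in some $A_{R,j}$, giving $R \subseteq A_R$ (the reverse inclusion is automatic); and then the easy direction, applied to $R = A_R$, certifies $\phi_R \in F$, so $\phi_R = \min_{f \in F} f$ pointwise. The main subtlety I expect is not the induction itself but handling the base case cleanly: one must notice that the minimum value $N_0$ of $f$ need not be $0$, so the argument that the elements attaining $f = N_0$ are exactly the units (together with the observation $0 \in A_{R,0}$) is what makes the induction start and the inductive step's coset argument go through uniformly.
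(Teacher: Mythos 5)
Your proof is correct. Note that the paper itself does not prove this lemma --- it is stated with a citation to Motzkin and used as a black box --- so there is no in-paper argument to compare against; your two-directional argument (the level function $\phi_R$ is Euclidean when $R = A_R$, and $\phi_R \leq f$ for any Euclidean $f$ by strong induction on $f(a)$, which simultaneously yields $R = A_R$ and minimality) is the standard proof of Motzkin's lemma, and your handling of the base case via $N_0 = \min_{x \neq 0} f(x)$ (rather than assuming the minimum value is $0$) together with the observation $0 \in A_{R,0}$ is exactly the care the argument needs.
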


We call $\phi_R$ the \textbf{minimal Euclidean function} on $R$.  Example \ref{example_in_Z} shows that $\phi_{\Z} (x) = \lfloor \log_2 |x| \rfloor$ is the number of digits in the binary expansion of 
$x$, as mentioned in the introduction.
Before Motzkin's Lemma, proving a domain was Euclidean was an exercise in trial and error, as people searched for potential Euclidean functions.  
Motzkin showed that if a Euclidean function exists, then the Motzkin sets explicitly define it.  
Motzkin's Lemma tells us that $A_{R, n} = \phi_{R}^{-1} ([0,n])$.

The simplest applications of Motzkin's Lemma show that certain rings are not Euclidean.
If $R$ is a principal ideal domain with finitely many multiplicative units, 
it is easy to compute $A_{R,n}$ for small $n$.  If the sets stabilize, then $A_R \subsetneq R$ and $R$ is not a Euclidean domain.
Computing Motzkin sets quickly shows that 
while $\Q(\frac{1 + \sqrt{-19}}{2})$ is principal, it is not Euclidean.

\subsection{Motzkin Sets for the Gaussian Integers}\label{A_sets}

The elements of $\Z[i] = \{ a + bi: a, b \in \Z \}$ are called Gaussian integers because Gauss showed that 
$\Nm(a+bi) = a^2 + b^2$ is a Euclidean function for $\Z[i]$, making $\Z[i]$ a norm-Euclidean ring.
The (algebraic) norm is a multiplicative function, so $\Nm(a+bi) \Nm(c+di) = \Nm((a+bi)(c+di))$, and $\Nm(a+bi) = |\Z[i]/(a+bi)\Z[i]|$, the number of cosets of $a+bi$.  
The domain $\Z[i]$ is the ring of integers of $\Q(i)$, and its group of multiplicative units is $\Z[i]^{\times} = \{ \pm 1, \pm i \}$.  
Following Definition \ref{construction}, we present the first three Motzkin sets for $\Z[i]$.
\begin{example}\label{example_in_G}
\begin{align*}
A_{\mathbb{Z}[i], 0} &= \{0, \pm 1, \pm i \},\\
A_{\mathbb{Z}[i], 1} & =  \{0, \pm 1, \pm i , \pm 1 \pm i, \pm 2 \pm i, \pm 1 \pm 2i\},\\
A_{\mathbb{Z}[i], 2} & =  \{0, \pm 1, \pm i , \pm 1 \pm i, \pm 2 \pm i, \pm 1 \pm 2i\}  \\
&  \cup  \{  \pm 2, \pm 2i, \pm 3, \pm 3i, \pm 3 \pm i, \pm 1 \pm 3i, \pm 4 \pm i, \pm 1 \pm 4i, \pm 2 \pm 3i, \pm 3 \pm 2i\}.
\end{align*}
\end{example}
For $n \geq 1$,
\[A_{\mathbb{Z}[i],n} = A_{\mathbb{Z}[i],n-1} \cup \{a+bi \in \mathbb{Z}[i] :A_{\mathbb{Z}[i], n-1} \twoheadrightarrow \mathbb{Z}[i]/(a+bi) \},\]
so the sets $A_{\mathbb{Z}[i], n}$ are closed under multiplication by units, as $a+bi$ and its
associates $u(a+bi)$, $u \in \Z[i]^{\times}$, generate the same ideal.  This gives the sets $A_{\mathbb{Z}[i], n}$ a four-fold symmetry,
but the Gaussian integers' Motzkin sets actually have an eight-fold symmetry.

\begin{lemma}\label{cc}  The sets $A_{\mathbb{Z}[i],n}$ are closed under complex conjugation.
\end{lemma}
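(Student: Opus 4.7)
The plan is a straightforward induction on $n$, using the fact that complex conjugation is a ring automorphism of $\Z[i]$ and so commutes with the construction of the Motzkin sets.

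For the base case, I would simply observe that $A_{\Z[i],0} = \{0, \pm 1, \pm i\}$ and note that $\overline{0}=0$, $\overline{\pm 1}=\pm 1$, and $\overline{\pm i}=\mp i$, so the set is stable under conjugation.

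For the inductive step, I would assume that $A_{\Z[i],n-1}$ is closed under complex conjugation and pick an arbitrary $\beta\in A_{\Z[i],n}$. If $\beta$ already lies in $A_{\Z[i],n-1}$, the inductive hypothesis immediately gives $\overline{\beta}\in A_{\Z[i],n-1}\subseteq A_{\Z[i],n}$. Otherwise $\beta$ is a new element, meaning $A_{\Z[i],n-1}\twoheadrightarrow \Z[i]/\beta$. I would then verify that $A_{\Z[i],n-1}\twoheadrightarrow \Z[i]/\overline{\beta}$: given any $a\in\Z[i]$, apply the hypothesis on $\beta$ to $\overline{a}$ to get $r\in A_{\Z[i],n-1}$ with $\overline{a}-r = q\beta$ for some $q\in\Z[i]$, then conjugate both sides to obtain $a - \overline{r} = \overline{q}\,\overline{\beta}$, i.e.\ $a\equiv \overline{r}\pmod{\overline{\beta}}$. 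Since $\overline{r}\in A_{\Z[i],n-1}$ by the inductive hypothesis, this shows $\overline{\beta}$ meets the membership criterion for $A_{\Z[i],n}$.

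I do not anticipate a real obstacle here: the only thing one has to be careful about is noting that conjugation is a bijection $\Z[i]\to\Z[i]$ (so quantifying over $a$ is the same as quantifying over $\overline{a}$) and a ring homomorphism (so divisibility by $\beta$ translates to divisibility by $\overline{\beta}$). Both are immediate, and no detailed geometry of the $B_n$ or of the $(1+i)$-ary expansions is needed — the lemma is a purely formal consequence of the recursive Motzkin construction together with the fact that $\overline{\,\cdot\,}$ is an automorphism of $\Z[i]$ fixing the unit group setwise.
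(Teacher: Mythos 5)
Your proof is correct and follows essentially the same route as the paper: induction on $n$, conjugating the representation $\overline{a} - r = q\beta$ to show $A_{\Z[i],n-1} \twoheadrightarrow \Z[i]/\overline{\beta}$. The only difference is cosmetic — you make the case split between $\beta \in A_{\Z[i],n-1}$ and $\beta$ being a new element explicit, which the paper leaves implicit.
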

\begin{proof} We use induction; note that $A_{\mathbb{Z}[i],0}$ is closed under complex conjugation.  
Suppose that $A_{\mathbb{Z}[i],n}$ is closed under complex conjugation, that $a+bi \in A_{\mathbb{Z}[i], n+1}$, and that 
$[x] \in \mathbb{Z}[i] / (\overline{a+bi})$.  Then there exist some $q$ in 
$\mathbb{Z}[i]$ and some $r \in A_{\mathbb{Z}[i], n}$ such that 
$\overline{x} = q (a+bi) + r$.
Our induction hypothesis forces $\overline{r}$ to be an element of $A_{\mathbb{Z}[i], n}$,  and as 
$x = \overline{q} (\overline{a+bi} ) + \overline{r}$, $A_{\Z[i],n} \twoheadrightarrow \Z/(\overline{a+bi})\Z$ and  
$\overline{a+bi} \in A_{\mathbb{Z}[i], n+1}$.
\end{proof}

\begin{coro}\label{you_get_the_whole_set} An element $a+bi \in A_{\mathbb{Z}[i],n}$ if and only if $\{ \pm a \pm bi \}, \{ \pm b \pm ai\} \subset A_{\mathbb{Z}[i],n}$.
\end{coro}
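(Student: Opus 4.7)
The plan is to observe that Corollary \ref{you_get_the_whole_set} is essentially the combination of two symmetries of the Motzkin sets: the four-fold rotational symmetry coming from multiplication by units, and the reflective symmetry coming from complex conjugation established in Lemma \ref{cc}. Together these generate a dihedral action of order $8$ whose orbit of $a+bi$ is exactly $\{\pm a \pm bi\} \cup \{\pm b \pm ai\}$.

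For the forward direction, I would first note (as the paper does just before Lemma \ref{cc}) that the sets $A_{\Z[i],n}$ are closed under multiplication by units, since an element $\beta$ and any associate $u\beta$ with $u \in \Z[i]^\times$ generate the same ideal and therefore have the same cosets to represent. Thus if $a+bi \in A_{\Z[i],n}$, then so are $-1\cdot(a+bi) = -a-bi$, $i\cdot(a+bi) = -b+ai$, and $-i\cdot(a+bi) = b-ai$, giving four of the eight required elements. Applying Lemma \ref{cc} to $a+bi$ yields $a-bi \in A_{\Z[i],n}$, and then the same closure under units applied to $a-bi$ produces the remaining four elements $-a+bi$, $b+ai$, and $-b-ai$. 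This exhausts $\{\pm a \pm bi\} \cup \{\pm b \pm ai\}$.

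The backward direction is immediate: $a+bi$ is itself one of the eight listed elements, so if the whole set is contained in $A_{\Z[i],n}$ then in particular $a+bi$ is.

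I do not expect a genuine obstacle here; the statement is really just a repackaging of Lemma \ref{cc} together with the unit-closure observation already made in the paragraph preceding it. The only small subtlety worth flagging is making explicit that unit-closure holds at every level $n$ (not merely in the limit $A_{\Z[i]}$), which is why the argument must use $u\beta$ and $\beta$ generating the same ideal, rather than any statement about $\phi_R$ itself.
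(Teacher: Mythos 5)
Your proof is correct and follows exactly the route the paper intends: the corollary is stated without separate proof precisely because it is the immediate combination of unit-closure of the sets $A_{\mathbb{Z}[i],n}$ (noted in the paragraph before Lemma \ref{cc}) with Lemma \ref{cc} itself, which is what you do.
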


Lemma \ref{cc} is a special case of the general result that if $K$ is a Galois number field, its Motzkin sets are closed under $\sigma$ for all $\sigma \in \Gal(K/ \Q)$.

\subsection{Representatives of Cosets of $a+bi$}\label{cosets}

Our definition of $A_{\Z[i],n}$ relies on sets that surject onto quotients $\Z[i]/(a + bi)$, so it behooves us to study how subsets of $\Z[i]$ map onto these quotients.  
First, we examine squares in the plane.  


\begin{lemma}\label{a_square}  
When $a > b \geq 0$, distinct elements in an $a \times a$ square in $\Z[i]$ are not congruent modulo $a +bi$. In other words, if $a > b \geq 0$, if $c,d \in \mathbb{Z}$,  if 
\begin{equation*}
S = \{ x+yi: c \leq x < c +a, d \leq y < d + a\},
\end{equation*}
and if $\alpha + \beta i, \gamma + \delta i$ are distinct elements of $S$, then $\alpha + \beta i \not \equiv \gamma +\delta i \pmod{a + bi}$.
\end{lemma}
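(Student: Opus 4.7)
The plan is to reduce the congruence condition to a divisibility statement on differences and then close it with a one-line norm calculation. If $\alpha+\beta i$ and $\gamma+\delta i$ are distinct points of $S$ that are congruent modulo $a+bi$, then their difference $x+yi:=(\alpha-\gamma)+(\beta-\delta)i$ is a nonzero multiple of $a+bi$; because both points have integer coordinates drawn from a half-open interval of length $a$, I automatically obtain $|x|,|y|\leq a-1$. So it suffices to show that the only element of $(a+bi)\Z[i]$ whose real and imaginary parts both have absolute value strictly less than $a$ is $0$, which will contradict the ``nonzero multiple'' conclusion and complete the proof.

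For that I would write $x+yi=(m+ni)(a+bi)$ with $m,n\in\Z$ and apply the multiplicativity of $\Nm$, giving
\[
x^2+y^2 = (m^2+n^2)(a^2+b^2).
\]
From $|x|,|y|<a$ I get $x^2+y^2<2a^2$, and from $b\geq 0$ I have $2a^2\leq 2(a^2+b^2)$, so $m^2+n^2<2$. The only integer solutions are $(m,n)\in\{(0,0),(\pm1,0),(0,\pm 1)\}$, and the first case returns $x+yi=0$, contradicting distinctness.

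The remaining step is to rule out the four unit multiples. Expanding them gives $x+yi\in\{\pm(a+bi),\pm(-b+ai)\}$, and in every one either the real or the imaginary part equals $\pm a$, violating the strict bound $|x|,|y|<a$. The only mild subtlety in the plan is that the norm inequality is not sharp enough to collapse $(m,n)$ to $(0,0)$ in one shot---it forces only $m^2+n^2\leq 1$---so the small case check on the unit orbit of $a+bi$ is unavoidable, and it is precisely the strict inequalities $|x|,|y|<a$ (inherited from the half-open structure of $S$) that make the check go through.
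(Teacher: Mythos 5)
Your proposal is correct and follows essentially the same route as the paper: bound the coordinate differences by $a-1$, use multiplicativity of the norm to force the multiplier to be a unit, and then rule out the four unit multiples $\pm(a+bi),\pm i(a+bi)$ because each has a coordinate equal to $\pm a$. No gaps; the case check you flag as unavoidable is exactly the step the paper performs as well.
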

\begin{proof} Suppose, leading to a contradiction, that $\alpha + \beta i \equiv \gamma +\delta i \pmod{a+bi}$.  Then there exists some $y \in \mathbb{Z}[i]$ such that 
$(\alpha - \gamma) + (\beta -\delta) i = y (a+bi)$.  
Note that 
\begin{equation*}
\Nm(y) \Nm(a+bi) = (\alpha -\gamma)^2 + (\beta -\delta)^2 \leq 2(a-1)^2 < 2(a^2 + b^2)=2 \Nm(a+bi).
\end{equation*}
As $\alpha + \beta i \neq \gamma + \delta i$, the norm of $y$ equals one, so  
$(\alpha - \gamma) + (\beta -\delta)i \in \{ \pm (a+bi), \pm (b-ai)\}$,
which cannot be, as $|\alpha -\gamma|, |\beta -\delta| \leq a-1$.
\end{proof} 

\begin{lemma} \label{two_squares} 
 If $a > b \geq 0$, if 
$S = \{ x+yi: 0 \leq x,y < a\}$, if $T=\{ x+iy: 0 \leq x <b, -b \leq y <0\}$,
and if $\alpha + \beta i, \gamma + \delta i$ are distinct elements of any translate of $S \cup T$, then 
$\alpha + \beta i \not \equiv \gamma + \delta i \pmod{a +bi}$ and $|S \cup T| = \Nm(a +bi)$.
The set $S \cup T$ contains exactly one representative of every coset of $a+bi$.
\end{lemma}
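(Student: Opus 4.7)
My strategy is to show the inclusion $S\cup T \hookrightarrow \Z[i] \twoheadrightarrow \Z[i]/(a+bi)$ is injective; combined with a cardinality count this will force it to be a bijection, giving the desired statement about coset representatives. Since congruence modulo $a+bi$ is translation-invariant, it suffices to handle the un-translated set. First I would note that $S$ lies in the closed upper half plane and $T$ strictly in the lower half plane, so they are disjoint; hence $|S \cup T| = a^2 + b^2 = \Nm(a+bi) = |\Z[i]/(a+bi)|$, and once injectivity is established, surjectivity and the uniqueness of representatives follow for free.

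For injectivity I would split into three cases based on where two putatively congruent elements $\alpha + \beta i$ and $\gamma + \delta i$ lie. If both are in $S$, this is a direct appeal to Lemma \ref{a_square}. If both are in $T$, the coordinate differences are bounded in absolute value by $b-1$, so the norm of the difference is at most $2(b-1)^2 < 2b^2 \leq a^2 + b^2 = \Nm(a+bi)$; since any nonzero multiple of $a+bi$ has norm at least $\Nm(a+bi)$, the difference must vanish.

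The main work, and the main obstacle, is the cross case, where $\alpha + \beta i \in S$ and $\gamma + \delta i \in T$. Here the bounds $\alpha - \gamma \in [1-b,\,a-1]$ and $\beta - \delta \in [1,\,a+b-1]$ are asymmetric, and the crude norm bound $(a-1)^2 + (a+b-1)^2$ is not small enough to force $|y|^2 \leq 1$ when we write $(\alpha - \gamma) + (\beta - \delta)i = y(a+bi)$. What does work is the slightly looser estimate $(a-1)^2 + (a+b-1)^2 < 3(a^2+b^2)$, which I would verify by expanding and reducing to $(a-b)^2 + b^2 + 4a + 2b - 2 > 0$ (automatic since $a > b \geq 0$). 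Because no Gaussian integer has norm $3$, this forces $|y|^2 \in \{1,2\}$, giving the eight candidates $y \in \{\pm 1, \pm i, \pm(1+i), \pm(1-i)\}$.

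I would then rule out each candidate by comparing the real or imaginary part of $y(a+bi)$ against the interval constraints. For example, $y=1$ forces $\alpha - \gamma = a$, but $\alpha - \gamma \leq a-1$; $y = i$ forces $\alpha - \gamma = -b$, but $\alpha - \gamma \geq 1-b$; $y = 1+i$ gives $(a-b) + (a+b)i$, forcing $\beta - \delta = a+b$, which exceeds the allowed $a+b-1$; the remaining five cases are symmetric in flavor and each violates exactly one of the two interval bounds. With all three cases disposed of, distinct elements of $S \cup T$ sit in distinct cosets, the map is injective hence bijective, and the lemma follows.
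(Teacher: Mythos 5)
Your proposal is correct and takes essentially the same route as the paper: injectivity of $S \cup T$ into $\Z[i]/(a+bi)$ via a norm bound on the multiplier $y$, elimination of the finitely many small-norm candidates, and the cardinality count $|S \cup T| = a^2 + b^2 = \Nm(a+bi)$. The only cosmetic differences are your sharper estimate $(a-1)^2 + (a+b-1)^2 < 3\Nm(a+bi)$ (the paper uses $< 4\Nm(a+bi)$ together with the nonexistence of Gaussian integers of norm $3$) and your explicit interval checks on $\alpha-\gamma$ and $\beta-\delta$, which replace the paper's figure-based observation that translating $S$ or $T$ by any candidate $y(a+bi)$ leaves $S \cup T$.
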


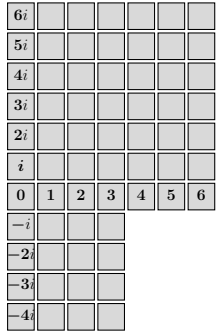
\begin{figure}[ht] 
\centering 

	\begin{tikzpicture} [scale=.5, transform shape]						
		
		\foreach \x in {0,...,6}
		\foreach \y in {0,...,6}{%
			\node[square]  at (.8*\x,.8*\y) {}; }
			
		\foreach \x in {0,...,3}
		\foreach \y in {1,...,4}{%
			\node[square]  at (.8*\x,-.8*\y) {}; }
			
		\foreach \x in {0,...,6}
			\node[circle,minimum size=1cm]  at (.8*\x,.4) {$\bm \x $}; 
			
		\foreach \y in {-4,...,-2}
			\node[circle,minimum size=1cm]  at (0,.4 + .8*\y) {$\bm \y i $}; 
			
		\node[circle,minimum size=1cm]  at (0,-.4) {$\bm -i $}; 
		\node[circle,minimum size=1cm]  at (0,1.2) {$\bm i $};	
			
		\foreach \y in {2,...,6}
			\node[circle,minimum size=1cm]  at (0,.4 + .8*\y) {$\bm \y i $};

	\end{tikzpicture}
	\caption{$S \cup T$ for $a +bi = 7 +4i$}	
	\label{Fig:S_cup_T}			
\end{figure}

\begin{proof} See Figure \ref{Fig:S_cup_T}.  Lemma \ref{a_square} shows that two distinct elements of $S$ (respectively, $T$) are not equivalent modulo $a+bi$.  It remains to show that if 
$\alpha + \beta i \in T$ and $\gamma + \delta i \in S$, then $\alpha + \beta i \not \equiv \gamma + \delta i \pmod{a+bi}$.  

Suppose, leading to a contradiction, there exists some 
$y \in \mathbb{Z}[i]$ such that $(\alpha + \beta i) - (\gamma + \delta i) = y(a+bi)$.  Then
\begin{align*}
\Nm(y)\Nm(a+bi) &= (\alpha - \gamma)^2 + (\beta - \delta)^2 \\
& \leq (a-1)^2 + (a+b-1)^2\\
& < 4 (a^2 + b^2) = 4 \Nm(a+bi),
\end{align*}
 so $1 \leq \Nm(y) < 4$. This means that $\Nm(y) = 1$ or $2$, as there are no Gaussian integers with norm $3$.  The Gaussian integers 
 with norm $1$ or $2$ are $\{ \pm 1, \pm i, \pm 1 \pm i \}$ 
and thus the set $C$ of potential values of $y(a+bi)$, where the real part of $y(a+bi)$ is $\geq 0$, is 
\begin{equation*}
 \{  a+bi,  b-ai,  a-b + (a+b)i,  a + b + (b-a)i \}.
\end{equation*}
If $x \in C$, if $\alpha + \beta i \in S$, and if $\gamma + \delta i \in T$, then neither $x + \alpha + \beta i$ nor 
$x + \gamma + \delta i$ is in $S \cup T$ (see Figure \ref{Fig:triangle}), so no two distinct elements of $S \cup T$ are congruent modulo $a +bi$.  
As $S$ and $T$ are disjoint, as $|S| = a^2$, and as $|T| = b^2$,
 the size of their union is $|S \cup T |= a^2 + b^2 = \Nm(a +bi)$.  
 We conclude that any translate of $S \cup T$ contains precisely one representative for each coset of $a +bi$.
\end{proof}

\begin{coro}\label{down_to_one_square} If $M \subset \Z[i]$, if $M$ is closed under multiplication by units, and if 
$S \subset U = \displaystyle \bigcup_{q \in \Z[i]} ( M + q(a +bi))$, then $M \twoheadrightarrow \Z[i]/(a+bi)$.
\end{coro}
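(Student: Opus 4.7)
The plan is to reduce to Lemma \ref{two_squares} by showing $S \cup T \subseteq U$, which by that lemma forces $M$ to hit every coset of $a+bi$. Since $S \subseteq U$ is given, all of the work is in proving $T \subseteq U$, and the only ingredient available is that $M$ (and hence $U$) is closed under multiplication by units of $\Z[i]$.

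The first step I would carry out is to observe that $U$ itself is stable under multiplication by every unit $u \in \{\pm 1, \pm i\}$. Indeed,
\[
uU \;=\; \bigcup_{q \in \Z[i]} \bigl(uM + uq(a+bi)\bigr) \;=\; \bigcup_{q' \in \Z[i]} \bigl(M + q'(a+bi)\bigr) \;=\; U,
\]
where the first equality uses $uM = M$ (closure of $M$ under units) and the second uses that $q \mapsto uq$ is a bijection of $\Z[i]$. In particular $-iU = U$, so from $S \subseteq U$ we immediately obtain $-iS \subseteq U$.

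Second, I would verify the geometric inclusion $T \subseteq -iS$. A direct computation gives $-iS = \{\,y - xi : 0 \le x,y < a\,\}$, an $a\times a$ square occupying the fourth quadrant with real part in $[0,a)$ and imaginary part in $(-a,0]$. Under the Lemma \ref{two_squares} hypothesis $a > b \ge 0$, the set $T = \{\,x+yi : 0 \le x < b,\ -b \le y < 0\,\}$ has real part in $[0,b) \subset [0,a)$ and imaginary part in $[-b,0) \subset (-a,0]$, so $T \subseteq -iS \subseteq U$. Combined with $S \subseteq U$, this yields $S \cup T \subseteq U$.

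Finally, by Lemma \ref{two_squares}, $S \cup T$ contains exactly one representative of every coset of $a+bi$. Given any $[x] \in \Z[i]/(a+bi)$, pick its representative $r \in S \cup T \subseteq U$; then $r = m + q(a+bi)$ for some $m \in M$ and $q \in \Z[i]$, so $m \equiv r \equiv x \pmod{a+bi}$, proving $M \twoheadrightarrow \Z[i]/(a+bi)$. The main obstacle is conceptually mild --- it is the coordinate check that the rotated square $-iS$ actually covers $T$, which is precisely where the strict inequality $a > b$ (rather than $a \ge b$) is used; everything else is bookkeeping leveraging the unit-closure of $M$.
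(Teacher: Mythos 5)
Your proposal is correct and follows essentially the same route as the paper: both establish $T \subset -iS \subset -iU \subset U$ using the unit-closure of $M$, and then invoke Lemma \ref{two_squares} to pull a representative of each coset from $S \cup T$ back into $M$. The only difference is that you spell out the coordinate check $T \subseteq -iS$ and the stability $uU = U$, which the paper leaves implicit.
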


\begin{proof}  If $M$ is closed under multiplication by units and $S \subset U$, then $T \subset -iS \subset -i U \subset U$, and $S \cup T \subset U$.  

Given $[x] \in \Z[i]/(a +bi)$,  there exists an $r \in (S \cup T)$ such that 
$[x] = [r]$ by Lemma \ref{two_squares}.  Our hypothesis says there exist an $m \in M$ and $q \in \Z[i]$ such that $r = m + q(a +bi)$.  We conclude that 
$[m] = [x]$ and thus  $M \twoheadrightarrow \Z[i]/(a +bi)$.
\end{proof}

So far, we have looked at squares 
to analyze collections of representatives of cosets of $a +bi$.  
We now turn to triangles.

\begin{definition} \label{basic_triangle}  If $a+bi \in \Z[i] \setminus 0$, let
\begin{equation*}
\mathscr{S}_{a+bi} := \{ x+yi: 0 \leq x,y, x +y  < \max (|a|, |b| )\}.
\end{equation*}
\end{definition}

\begin{lemma}\label{triangle} Suppose that  $a > b \geq 0$, that $(1 +i)  \nmid a +bi$, and that $M \subset \Z[i]$ is closed under multiplication by units.  
If 
$\mathscr{S}_{a+bi} \subset U = \displaystyle \bigcup_{q \in \Z[i]} (M + q(a+bi))$, then $M \twoheadrightarrow \mathbb{Z}[i]/(a+bi)$.

\end{lemma}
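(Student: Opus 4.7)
The plan is to apply Corollary \ref{down_to_one_square} by showing that the solid square $S=\{x+yi:0\leq x,y<a\}$ is contained in $U$. Because $M$ is closed under multiplication by units and $q$ ranges over all of $\Z[i]$ in the definition of $U$, the set $U$ is itself stable under multiplication by units. Consequently all four rotates $\mathscr{S}_{a+bi}$, $i\mathscr{S}_{a+bi}$, $-\mathscr{S}_{a+bi}$, $-i\mathscr{S}_{a+bi}$ lie in $U$, and their union is the ``diamond'' $D_0:=\{x+yi:|x|+|y|<a\}$. Invariance of $U$ under translation by $(a+bi)\Z[i]$ then forces $D_0+q(a+bi)\subset U$ for every $q\in\Z[i]$, so it suffices to cover $S$ by finitely many such translates of $D_0$.

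For the ``lower'' half of $S$ (where $x+y<a$) the point $(x,y)$ lies directly in $D_0=D_0+0\cdot(a+bi)$. For the remaining triangle $\{(x,y)\in S:x+y\geq a\}$, a direct calculation shows that the translate $D_0+(a+bi)$, centered at $(a,b)$, covers precisely the slice $b-x<y<b+x$, while $D_0+i(a+bi)$, centered at $(-b,a)$, covers $y>b+x$. Since $x+y\geq a>b$ already gives $y>b-x$, these two translates together handle every point of $S$ except those lying on the single diagonal line $y=x+b$.

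The main obstacle is this diagonal line, and it is precisely where the hypothesis $(1+i)\nmid a+bi$ enters. Writing $a+bi=(1+i)(c+di)$ forces $a\equiv b\pmod 2$, so our hypothesis is equivalent to $a+b$ (and hence $a-b$) being odd. For a point $(x,b+x)\in S$ with $x+y\geq a$ we have $2x+b\geq a$; since $(a-b)/2$ is not an integer, the inequality $x\geq(a-b)/2$ is automatically strict. Subtracting $(1+i)(a+bi)=(a-b)+(a+b)i$ then yields $(x+b-a)+(x-a)i$, and a case split on whether $x+b<a$ or $x+b\geq a$ verifies that $|x+b-a|+|x-a|<a$, placing $(x,b+x)$ in $D_0+(1+i)(a+bi)\subset U$. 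Hence $S\subset U$, and Corollary \ref{down_to_one_square} delivers $M\twoheadrightarrow\Z[i]/(a+bi)$.
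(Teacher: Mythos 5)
Your proposal is correct and follows essentially the same route as the paper: reduce to showing the square $S=\{x+yi:0\leq x,y<a\}$ lies in $U$ and invoke Corollary \ref{down_to_one_square}, cover $S$ by unit-rotates of $\mathscr{S}_{a+bi}$ translated by $0$, $a+bi$, $i(a+bi)$, and $(1+i)(a+bi)$, and use the oddness of $a-b$ (equivalent to $(1+i)\nmid a+bi$) to catch the one leftover lattice diagonal. Packaging the four rotates into the diamond $\{x+yi:|x|+|y|<a\}$ is a tidy bookkeeping simplification (your leftover diagonal is $y=x+b$ rather than the paper's $y=a-x$), but the underlying covering argument is the same as the paper's.
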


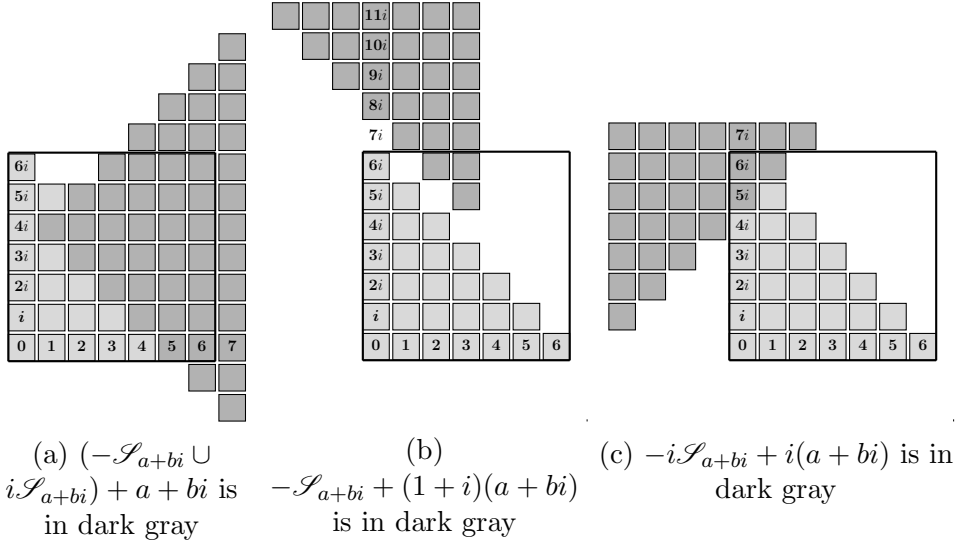
\begin{figure}[ht]\centering

\subcaptionbox{
 $(- \mathscr{S}_{a+bi} \cup i \mathscr{S}_{a+bi} ) + a +bi$ is in dark gray}{
	\begin{tikzpicture} [scale=.5, transform shape]						
		
		\foreach \y in {0,...,6}
		\node[square]  at (0,.8*\y) {};

		\foreach \y in {0,...,3}
		\node[square]  at (.8,.8*\y) {};
		\foreach \y in {5,...,5}
		\node[square]  at (.8,.8*\y) {};
		\foreach \y in {4,...,4}
		\node[squared]  at (.8,.8*\y) {};
		
		\foreach \y in {0,...,2}
		\node[square]  at (1.6,.8*\y) {};
		\foreach \y in {3,...,5}
		\node[squared]  at (1.6,.8*\y) {};
		
		\foreach \y in {0,...,1}
		\node[square]  at (2.4,.8*\y) {};
		\foreach \y in {2,...,6}
		\node[squared]  at (2.4,.8*\y) {};
		
		\node[square]  at (3.2,0) {};
		\foreach \y in {1,...,7}
		\node[squared]  at (3.2,.8*\y) {};
		
		\foreach \y in {0,...,8}
		\node[squared]  at (4,.8*\y) {};
		
		\foreach \y in {-1,...,9}
		\node[squared]  at (4.8,.8*\y) {};
		
		\foreach \y in {-2,...,10}
		\node[squared]  at (5.6,.8*\y) {};

		\foreach \x in {0,...,7}
			\node[circle,minimum size=1cm]  at (.8*\x,.4) {$\bm \x $};

		\node[circle,minimum size=1cm]  at (0,1.2) {$\bm i $};	
			
		\foreach \y in {2,...,6}
			\node[circle,minimum size=1cm]  at (0,.4 + .8*\y) {$\bm \y i $}; 		
			
		\draw[thick] (-.35,0)--(5.15,0);
		\draw[thick] (-.35,5.55)--(5.15,5.55);
		\draw[thick] (-.35,0)--(-.35,5.55);	
		\draw[thick] (5.15,0)--(5.15,5.55);	
		

	\end{tikzpicture}}		
	\subcaptionbox{
 $- \mathscr{S}_{a+bi}  + (1 +i)(a +bi)$ is in dark gray}{
	\begin{tikzpicture} [scale=.5, transform shape]						
		
		\foreach \y in {11,...,11}
		\node[squared]  at (-2.4,.8*\y) {};
		
		\foreach \y in {10,...,11}
		\node[squared]  at (-1.6,.8*\y) {}; 
		
		\foreach \y in {9,...,11}
		\node[squared]  at (-.8,.8*\y) {}; 
		
		\foreach \y in {0,...,6}
		\node[square]  at (0,.8*\y) {}; 
		
		\foreach \y in {8,...,11}
		\node[squared]  at (0,.8*\y) {};

		\foreach \y in {0,...,5}
		\node[square]  at (.8,.8*\y) {};
		\foreach \y in {7,...,11}
		\node[squared]  at (.8,.8*\y) {};
		
		\foreach \y in {0,...,4}
		\node[square]  at (1.6,.8*\y) {};
		\foreach \y in {6,...,11}
		\node[squared]  at (1.6,.8*\y) {};
		
		\foreach \y in {0,...,3}
		\node[square]  at (2.4,.8*\y)  {};
		\foreach \y in {5,...,11}
		\node[squared]  at (2.4,.8*\y)  {};
		
		\foreach \y in {0,...,2}
		\node[square]  at (3.2,.8*\y)  {};
		
		\foreach \y in {0,...,1}
		\node[square]  at (4,.8*\y)  {};
		
		\node[square]  at (4.8,0)  {};

		\foreach \x in {0,...,6}
			\node[circle,minimum size=1cm]  at (.8*\x,.4) {$\bm \x $};

		\node[circle,minimum size=1cm]  at (0,1.2) {$\bm i $};	
			
		\foreach \y in {2,...,11}
			\node[circle,minimum size=1cm]  at (0,.4 + .8*\y) {$\bm \y i $}; 		
			
		\draw[thick] (-.35,0)--(5.15,0);
		\draw[thick] (-.35,5.55)--(5.15,5.55);
		\draw[thick] (-.35,0)--(-.35,5.55);	
		\draw[thick] (5.15,0)--(5.15,5.55);	
		
		\draw[thick] (5.6, -1.6) --(5.6, -1.6);			
		
	\end{tikzpicture}}		
	\subcaptionbox{
 $-i \mathscr{S}_{a+bi}  + i(a +bi)$ is in dark gray}{
	\begin{tikzpicture} [scale=.5, transform shape]	
	
		\foreach \y in {1,...,7}
		\node[squared]  at (-3.2,.8*\y) {};
		
		\foreach \y in {2,...,7}
		\node[squared]  at (-2.4,.8*\y) {};
		
		\foreach \y in {3,...,7}
		\node[squared]  at (-1.6,.8*\y) {};
		
		\foreach \y in {4,...,7}
		\node[squared]  at (-.8,.8*\y) {};

		\foreach \y in {0,...,4}
		\node[square]  at (0,.8*\y) {}; 
		\foreach \y in {5,...,7}
		\node[squared]  at (0,.8*\y) {}; 		
		
		\foreach \y in {0,...,5}
		\node[square]  at (.8,.8*\y) {};
		\foreach \y in {6,...,7}
		\node[squared]  at (.8,.8*\y) {};
		
		\foreach \y in {0,...,4}
		\node[square]  at (1.6,.8*\y) {};
		\foreach \y in {7,...,7}
		\node[squared]  at (1.6,.8*\y) {};
		
		\foreach \y in {0,...,3}
		\node[square]  at (2.4,.8*\y) {};
		
		\foreach \y in {0,...,2}
		\node[square]  at (3.2,.8*\y) {};
		
		\foreach \y in {0,...,1}
		\node[square]  at (4,.8*\y) {};
		
		\foreach \y in {0,...,0}
		\node[square]  at (4.8,.8*\y) {};

		\foreach \x in {0,...,6}
			\node[circle,minimum size=1cm]  at (.8*\x,.4) {$\bm \x $};

		\node[circle,minimum size=1cm]  at (0,1.2) {$\bm i $};	
			
		\foreach \y in {2,...,7}
			\node[circle,minimum size=1cm]  at (0,.4 + .8*\y) {$\bm \y i $}; 		
		
		\draw[thick] (-.35,0)--(5.15,0);
		\draw[thick] (-.35,5.55)--(5.15,5.55);
		\draw[thick] (-.35,0)--(-.35,5.55);	
		\draw[thick] (5.15,0)--(5.15,5.55);	
		
		\draw[thick] (5.6, -1.6) --(5.6, -1.6);		
		
	\end{tikzpicture}}
\caption{When $a +bi = 7 +4i$\\$\mathscr{S}_{a+bi}$ is in light gray in all three figures}
\label{Fig:triangle}				
\end{figure}

\begin{proof}  
We will show that if $\mathscr{S}_{a+bi} \subset U$, then $S = \{ x +yi: 0 \leq x, y <a \}$ is also contained in $U$.
  Observe that if $u \in \{ \pm 1, \pm i\}$, if $q \in \Z[i]$,
and if $\mathscr{S}_{a+bi} \subset U$, then $u (\mathscr{S}_{a+bi} + q(a+bi)) \subset U$.  
Figure \ref{Fig:triangle}, with its outlined $S$, may help the reader visualize the following arguments.

Computation shows that
\begin{equation}\label{long}
 ((- \mathscr{S}_{a+bi} \cup i \mathscr{S}_{a+bi}) + a + bi) \supset \{x + yi: 0 < x \leq a, -x + b < y < x + b \}.
 \end{equation}
The set $\mathscr{S}_{a+bi}$ can be written as $\{x + yi: 0 \leq x <a, 0 \leq y<a-x\}$.  As $a >b$, $-x + b < a-x$ for all $x$ and thus 
equation \ref{long} implies that 
\begin{align}\label{triangle_subsets}
\nonumber
U &\supset \mathscr{S}_{a+bi} \cup ((- \mathscr{S}_{a+bi} \cup i \mathscr{S}_{a+bi}) + a + bi) \\
&\supset \{ x + yi: 0 \leq x < a, 0 \leq y < \max (a -x, x + b )\}.
\end{align}  
Because $x + b -1 \geq a-1$ when $x \geq a-b$, 
$\{x + yi: a-b \leq x < a, 0 \leq y < a \} \subset U$
(in Figure \ref{Fig:triangle}, this is $[3,6] \times [0, 6i] \subset U$).  Our proof that $S \subset U$ then reduces to demonstrating that
\[\{x + yi: 0 \leq x < a-b, \max (a-x, x+b ) \leq y < a \} \subset U.\]  

Mark that \[-\mathscr{S}_{a+bi} + (1+i)(a+bi) \supset \{x+yi: 0 \leq  x \leq a-b, a - x < y \leq a+b\},\] 
so $U$ contains 
$\{x + yi: 0 \leq x < a-b, 0 \leq y < a, y \neq a-x\}$.  
When $x > \frac{a-b}{2}$, $a-x < x+b $, so 
$U$ contains $\{x +yi: \frac{a-b}{2} < x < a-b, y = a-x\}$ by equation \ref{triangle_subsets}.
We have now reduced the problem to showing that 
\begin{equation} \label{diagonal_subset}
\left \{x+yi: 0 \leq x < \frac{a-b}{2}, y = a-x  \right \} \subset U;
\end{equation}
the condition is $x < \frac{a-b}{2}$ as $1+i \nmid a+bi$, which is equivalent to $a-b$ being odd.  
The variable $x$ represents an integer, so if $x \leq \frac{a-b}{2}$, then $x < \frac{a-b}{2}$.

To finish, note that 
\[-i\mathscr{S}_{a+bi} + i(a+bi) \supseteq \{x +yi: 0 \leq x < a-b, b + x < y \leq a\}.\]  
When $0 \leq x < \frac{a-b}{2}$, $a - x > b+x$,
so $-i\mathscr{S}_{a+bi} + i(a+bi)$ ( and thus the union $U$) contains $\{x+yi: 0 \leq x <\frac{a-b}{2}, y = a-x\}$.

  We have now shown that equation \ref{diagonal_subset} does hold, so
 $U$ contains all of $S$, and therefore $M \twoheadrightarrow \Z[i]/(a + bi)$
by Corollary \ref{down_to_one_square}. 

\end{proof}

\subsection{$(1 + i)$-ary expansions in $\mathbb{Z}[i]$}\label{expansions}

\begin{definition}\label{sets B_n} The sets $B_n$ are the Gaussian integers that can be written with $n+1$ `digits,' i.e. 
$$B_n = \left \{ \sum_{j=0}^n v_j (1+i)^n, v_j \in \{0, \pm 1, \pm i\} \right \}.$$
\end{definition}

This new notation allows us to restate Lenstra's result, Equation \ref{1+i expansion}, as 
$\phi_{\Z[i]}^{-1} ([0,n]) = A_{\Z[i],n} = B_n$.  
  Unfortunately for us,
it is not obvious which sets $B_n$ a given element $a+bi$ belongs to.  For example, 
as $4=-(1+i)^4$, it is clear that $4+i = -(1+i)^4 +i$, and thus $4+i \in B_4$.  It is not so obvious that 
$4+i = i(1+i)^2 +(1+i) +1,$
revealing that $4+i$ is also in $B_2$ (and thus also $B_3$).


In \cite{Graves}, the author introduced the following geometric sets and theorem, giving a fast way to compute $\phi_{\Z[i]}(a+bi)$. 
The sets are all octagonal when plotted in $\Z \times  \Z i$, as shown in Figure \ref{fig:oct_examples}.  

\begin{definition}\label{octogons}  We define 
\begin{align*}
Oct_n &: = \{ x+yi \in \Z[i]: |x|,|y| \leq w_n -2 ,|x| + |y| \leq w_{n+1} - 3 \},\\
S_n &: = \{ x+yi \in \Z[i] \setminus 0:  |x|,|y| \leq w_n -2, |x| + |y| \leq w_{n+1} - 3 ,2 \nmid \gcd (x,y)\},\\
\intertext{and}
D_n &: = \{ x+yi \in \Z[i] \setminus 0:  |x|,|y| \leq w_n -2, |x| + |y| \leq w_{n+1} - 3 ,2 \nmid (x+y)\}.
\end{align*}
\end{definition}

It follows that 
$S_n = \{x +yi \in Oct_n: (1 +i)^2 \nmid (x +yi)\}$ and 
$D_n = \{x +yi \in Oct_n: (1+i) \nmid (x+yi) \}$, so $D_n \subset S_n \subset Oct_n$, as shown in Figure \ref{fig:oct_examples}.    
Lemma 2.6 from \cite{Graves} shows that for $n \geq 1$, $S_n = D_n \cup (1+i) D_{n-1}$.  

\begin{figure}[ht]\centering
\subcaptionbox{$D_2$}{
	\begin{tikzpicture} [scale=.4, transform shape]

		\foreach \y in {-3,-1, 1,3}
		\node[square]  at (0,.8*\y) {};
		
		\foreach \y in {-2,...,2}
		\node[square]  at (.8,1.6*\y) {};
		
		\foreach \y in {-2,...,2}
		\node[square]  at (-.8,1.6*\y) {};
		
		\foreach \y in {-3,-1, 1,3}
		\node[square]  at (1.6,.8*\y) {};
		
		\foreach \y in {-3,-1, 1,3}
		\node[square]  at (-1.6,.8*\y) {};
		
		\foreach \y in {-1,...,1}
		\node[square]  at (2.4,1.6*\y) {};
		
		\foreach \y in {-1,...,1}
		\node[square]  at (-2.4,1.6*\y) {};
		
		\node[square]  at (3.2,.8) {};
		
		\node[square]  at (-3.2,.8) {};
		
		\node[square]  at (3.2,-.8) {};
		
		\node[square]  at (-3.2,-.8) {};
		
		\node [circle,minimum size=1cm] at (0,.4) {$\bm 0 $};
		\node [circle,minimum size=1cm] at (.8,.4) {$\bm 1 $};
		\node [circle,minimum size=1cm] at (-.8,.4) {$\bm -1 $};
		\node [circle,minimum size=1cm] at (0,1.2) {$\bm i $};
		\node [circle,minimum size=1cm] at (0,-.4) {$\bm -i $}; 
		 
	\end{tikzpicture}}	
\subcaptionbox{$S_2$}{
	\begin{tikzpicture} [scale=.4, transform shape]

		\node[square]  at (.8,0) {};  
		\node[square]  at (-.8,0) {}; 
		\node[square]  at (0,.8) {}; 
		\node[square]  at (0,-.8) {};

		\node[square]  at (.8, .8) {}; 
		\node[square]  at (-.8, .8) {};
		\node[square]  at (-.8, -.8) {};
		\node[square]  at (.8, -.8) {};
		
		\node[square]  at (0, 2.4) {}; 
		
		\node[square]  at (.8, 1.6) {};
		\node[square]  at (.8, 2.4) {};
		\node[square]  at (.8, 3.2) {};
		
		\node[square]  at (1.6, .8) {}; 
		\node[square]  at (1.6, 2.4) {};
		
		\node[square]  at (2.4, .8) {};
		\node[square]  at (2.4, 1.6) {};
		
		\node[square]  at (3.2, .8) {};
		
		\node[square]  at (2.4, 0) {}; 
		
		\node[square]  at (0, -2.4) {}; 
		
		\node[square]  at (.8, -1.6) {};
		\node[square]  at (.8, -2.4) {};
		\node[square]  at (.8, -3.2) {};
		
		\node[square]  at (1.6, -.8) {}; 
		\node[square]  at (1.6, -2.4) {};
		
		\node[square]  at (2.4, -.8) {};
		\node[square]  at (2.4, -1.6) {};
		
		\node[square]  at (3.2, -.8) {};
		
		\node[square]  at (0, 2.4) {}; 
		
		\node[square]  at (-.8, 1.6) {};
		\node[square]  at (-.8, 2.4) {};
		\node[square]  at (-.8, 3.2) {};
		
		\node[square]  at (-1.6, .8) {}; 
		\node[square]  at (-1.6, 2.4) {};
		
		\node[square]  at (-2.4, .8) {};
		\node[square]  at (-2.4, 1.6) {};
		
		\node[square]  at (-3.2, .8) {};
		
		\node[square]  at (-2.4, 0) {}; 
		
		\node[square]  at (-.8, -1.6) {};
		\node[square]  at (-.8, -2.4) {};
		\node[square]  at (-.8, -3.2) {};
		
		\node[square]  at (-1.6, -.8) {}; 
		\node[square]  at (-1.6, -2.4) {};
		
		\node[square]  at (-2.4, -.8) {};
		\node[square]  at (-2.4, -1.6) {};
		
		\node[square]  at (-3.2, -.8) {};
		
		\node[square]  at (0, -.8) {};
		
		\node [circle,minimum size=1cm] at (0,.4) {$\bm 0 $};
		\node [circle,minimum size=1cm] at (.8,.4) {$\bm 1 $};
		\node [circle,minimum size=1cm] at (-.8,.4) {$\bm -1 $};
		\node [circle,minimum size=1cm] at (0,1.2) {$\bm i $};
		\node [circle,minimum size=1cm] at (0,-.4) {$\bm -i $};

	\end{tikzpicture}}
	\subcaptionbox{$B_2 $}{
	\begin{tikzpicture} [scale=.4, transform shape]
		
		\node[square]  at (0,0) {};
		
		\foreach \y in {-3,-1, 1,3}
		\node[square]  at (0,.8*\y) {};
		
		\foreach \y in {-2,2}
		\node[square]  at (0,.8*\y) {};
		
		\foreach \y in {-4,...,4}
		\node[square]  at (.8,.8*\y) {};
		
		\foreach \y in {-4,...,4}
		\node[square]  at (-.8,.8*\y) {};
		
		\foreach \y in {-3,-1,1,3}
		\node[square]  at (1.6,.8*\y) {};
		
		\foreach \y in {-3,-1,1,3}
		\node[square]  at (-1.6,.8*\y) {};
		
		\node[square]  at (-1.6,0) {};
		\node[square]  at (1.6,0) {};
		
		\foreach \y in {-2,...,2}
		\node[square]  at (2.4,.8*\y) {};
		
		\foreach \y in {-2,...,2}
		\node[square]  at (-2.4,.8*\y) {};
		
		\foreach \y in {-1,1}
		\node[square]  at (3.2,.8*\y) {};
		
		\foreach \y in {-1,1}
		\node[square]  at (-3.2,.8*\y) {};
		
		\node [circle,minimum size=1cm] at (0,.4) {$\bm 0 $};
		\node [circle,minimum size=1cm] at (.8,.4) {$\bm 1 $};
		\node [circle,minimum size=1cm] at (-.8,.4) {$\bm -1 $};
		\node [circle,minimum size=1cm] at (0,1.2) {$\bm i $};
		\node [circle,minimum size=1cm] at (0,-.4) {$\bm -i $};

\end{tikzpicture}}
\subcaptionbox{$Oct_2$}{
	\begin{tikzpicture} [scale=.4, transform shape]

		\foreach \y in {-4,...,4}
		\node[square]  at (0,.8*\y) {};
		
		\foreach \y in {-4,...,4}
		\node[square]  at (.8,.8*\y) {};
		
		\foreach \y in {-4,...,4}
		\node[square]  at (-.8,.8*\y) {};
		
		\foreach \y in {-3,...,3}
		\node[square]  at (1.6,.8*\y) {};
		
		\foreach \y in {-3,...,3}
		\node[square]  at (-1.6,.8*\y) {};
		
		\foreach \y in {-2,...,2}
		\node[square]  at (2.4,.8*\y) {};
		
		\foreach \y in {-2,...,2}
		\node[square]  at (-2.4,.8*\y) {};
		
		\foreach \y in {-1,...,1}
		\node[square]  at (3.2,.8*\y) {};
		
		\foreach \y in {-1,...,1}
		\node[square]  at (-3.2,.8*\y) {};

		\node [circle,minimum size=1cm] at (0,.4) {$\bm 0 $};
		\node [circle,minimum size=1cm] at (.8,.4) {$\bm 1 $};
		\node [circle,minimum size=1cm] at (-.8,.4) {$\bm -1 $};
		\node [circle,minimum size=1cm] at (0,1.2) {$\bm i $};
		\node [circle,minimum size=1cm] at (0,-.4) {$\bm -i $}; 
		 
	\end{tikzpicture}}			
\caption{Examples of $D_n$, $S_n$, $B_n $, and $Oct_n$ when $n =2$}
\label{fig:oct_examples}			
\end{figure}
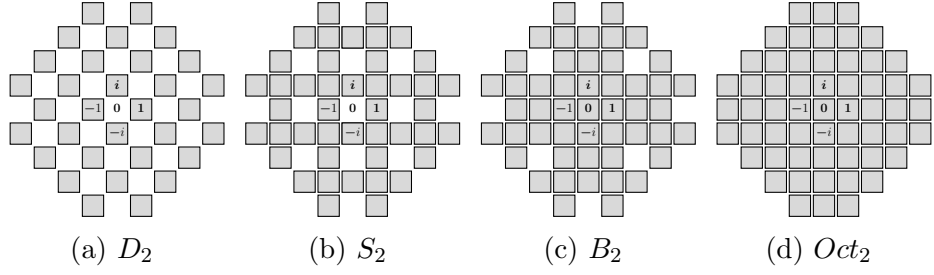
Our definitions let us describe the shape of $B_n$.
\begin{theorem}\label{octo_union} (\cite{Graves}, Theorems 2.4 and 2.7)
The set $B_n \setminus 0$ equals the disjoint union
\[  \displaystyle \coprod_{j=0}^{\lfloor n/2 \rfloor } 2^j S_{n- 2j} = \coprod_{j=0}^n (1+i)^j D_{n-j}.\]
\end{theorem}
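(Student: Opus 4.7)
The plan is to reduce the statement to the single identity $B_n \setminus 0 = \coprod_{j=0}^n (1+i)^j D_{n-j}$, and then prove that identity. The equivalence of the two unions on the right follows from Lemma 2.6 of \cite{Graves}, which gives $S_m = D_m \sqcup (1+i) D_{m-1}$ (with the convention $D_{-1} = \emptyset$), together with the factorization $2^j = (-i)^j (1+i)^{2j}$ and the closure of $D_m$ under multiplication by units (since the defining inequalities are symmetric under coordinate swap and sign change, and divisibility by $1+i$ is unit-invariant). These give $2^j S_{n-2j} = (1+i)^{2j} D_{n-2j} \sqcup (1+i)^{2j+1} D_{n-2j-1}$, and summing over $j \in \{0,\ldots,\lfloor n/2 \rfloor\}$ produces every index $l \in \{0,\ldots,n\}$ exactly once.

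For the inclusion $B_n \setminus 0 \subseteq \coprod_j (1+i)^j D_{n-j}$, I would first establish $B_n \subseteq Oct_n$ by direct computation. Using that $(1+i)^k$ has real and imaginary parts of absolute value $2^{k/2}$ with one part zero when $k$ is even, and both parts equal to $\pm 2^{(k-1)/2}$ when $k$ is odd, the triangle inequality applied to $\sum_{k=0}^n v_k (1+i)^k$ yields $\max(|\Re|, |\Im|) \leq w_n - 2$ and $|\Re|+|\Im| \leq w_{n+1} - 3$ after telescoping through $w_{k+2} = 2 w_k$. Next, I observe that if $x \in B_n$ is divisible by $1+i$, then the constant term of its expansion must be zero (the four units $\pm 1, \pm i$ all represent the unique nonzero residue class in $\Z[i]/(1+i)$), so $x/(1+i) \in B_{n-1}$; iterating gives $x/(1+i)^j \in B_{n-j}$ whenever $(1+i)^j \parallel x$, which forces $j \leq n$ and places the quotient in $Oct_{n-j}$ and coprime to $1+i$, hence in $D_{n-j}$. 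Disjointness of the union is immediate from stratification by $(1+i)$-adic valuation.

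For the reverse inclusion $\coprod_j (1+i)^j D_{n-j} \subseteq B_n$, it suffices to prove $D_m \subseteq B_m$ for all $m \geq 0$, since $(1+i)^j B_{n-j} \subseteq B_n$ trivially by padding the expansion with zeros in the low-index slots. I would prove $D_m \subseteq B_m$ by strong induction on $m$: given $a+bi \in D_m$, pick $v_0 \in \{\pm 1, \pm i\}$ and set $y = ((a+bi) - v_0)/(1+i)$. All four choices yield lattice points at the four corners of a unit square, with $\Re(y)$ and $\Im(y)$ each adjustable by $1$ independently. Choosing signs to minimize each coordinate yields $\max(|\Re(y)|, |\Im(y)|) \leq (|a|+|b|-1)/2$ and $|\Re(y)|+|\Im(y)| \leq \max(|a|,|b|) - 1$, which via $w_{k+2} = 2w_k$ translates to $y \in Oct_{m-1}$.

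The main obstacle is that $y$ must land in $B_{m-1}$, which is strictly smaller than $Oct_{m-1}$ in general (for instance $4 \in Oct_2 \setminus B_2$): elements of high $2$-adic valuation are constrained to strictly smaller octagonal regions. To close the induction one must refine the choice of $v_0$ according to the $2$-adic structure of $(a+bi) - v_0$, so that the resulting $y$ satisfies the tighter bounds $\max(|\Re(y)|, |\Im(y)|) \leq w_{m-1} - 2^{v_2(y)+1}$ and $|\Re(y)|+|\Im(y)| \leq w_m - 3 \cdot 2^{v_2(y)}$ required by the inductive form of Theorem \ref{octo_union} for $m-1$. Verifying uniformly that such a refinement always exists is where the main technical effort concentrates.
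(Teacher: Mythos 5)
Your outer layers are fine: the triangle-inequality computation does give $B_n \subseteq Oct_n$; stripping powers of $(1+i)$ (forced because a nonzero digit $v_0$ is a unit, hence not divisible by $1+i$) puts $x/(1+i)^j$ in $B_{n-j}\subseteq Oct_{n-j}$, nonzero and coprime to $1+i$, hence in $D_{n-j}$; disjointness by $(1+i)$-adic valuation is immediate; and the identity $2^j S_{n-2j} = (1+i)^{2j}D_{n-2j} \sqcup (1+i)^{2j+1}D_{n-2j-1}$ correctly reconciles the two unions (using $2^j=(-i)^j(1+i)^{2j}$ and the quoted splitting $S_m = D_m \sqcup (1+i)D_{m-1}$).

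The genuine gap is the reverse inclusion, which is where the entire content of the theorem lives. You correctly reduce it to $D_m \subseteq B_m$ and set up an induction by choosing a unit digit $v_0$ and passing to $y = ((a+bi)-v_0)/(1+i)$, but your sign analysis only lands $y$ in $Oct_{m-1}$, and $Oct_{m-1}$ is strictly larger than $B_{m-1}$: by the inductive form of the statement, $y\in B_{m-1}$ requires the valuation-dependent bounds $|\Re y|,|\Im y| \le w_{m-1}-2^{l+1}$ and $|\Re y|+|\Im y| \le w_m - 3\cdot 2^{l}$, where $2^{l}\parallel\gcd(\Re y,\Im y)$. The coordinate-minimizing choice of $v_0$ can easily produce a $y$ of high $2$-adic valuation that violates these tighter constraints, and you acknowledge this yourself, deferring ``verifying uniformly that such a refinement always exists.'' That deferred verification is not a routine check; it is essentially the whole substance of Theorems 2.4 and 2.7 of \cite{Graves}, which is precisely why the present paper does not prove Theorem \ref{octo_union} at all but imports it from the companion paper. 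As written, your induction does not close, so the proposal is a proof outline with its central step missing rather than a proof.
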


\begin{coro}\label{one_up} Suppose that $x +yi \in Oct_n$, and that $2^l \parallel (x,y)$.  If $l \neq \lfloor \frac{n}{2} \rfloor + 1$, then $l \leq \lfloor \frac{n}{2} \rfloor$ 
and $x +yi \in B_{n+1}$.  
\end{coro}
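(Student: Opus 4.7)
The plan is to handle the two assertions separately: first bound $l$ from above, then place $x+yi$ inside one of the pieces in the decomposition of $B_{n+1}$ provided by Theorem \ref{octo_union}.

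For the bound $l \leq \lfloor n/2 \rfloor$: assume $x+yi \neq 0$, so that $\max(|x|,|y|) \geq 2^l$. Combined with the octagonal bound $\max(|x|,|y|) \leq w_n - 2$, this yields $2^l \leq w_n - 2$. A direct check on the two parities of $n$ shows that $w_n \leq 2^{\lfloor n/2 \rfloor + 2}$, so $2^l \leq w_n - 2 < 2^{\lfloor n/2 \rfloor + 2}$, which forces $l \leq \lfloor n/2 \rfloor + 1$. Ruling out equality then gives the first conclusion.

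For the membership $x+yi \in B_{n+1}$: write $x = 2^l x'$ and $y = 2^l y'$, so that the hypothesis $2^l \parallel (x,y)$ says $\gcd(x',y')$ is odd, or equivalently $(1+i)^2 \nmid (x' + y'i)$. Using the recursion $w_{m-2} = w_m/2$, the octagonal inequalities defining $Oct_n$ translate into $|x'|, |y'| \leq w_{n-2l} - 2^{1-l}$ and $|x'| + |y'| \leq w_{n+1-2l} - 3 \cdot 2^{-l}$. I would then sharpen these to the $S_{n+1-2l}$ thresholds $|x'|, |y'| \leq w_{n+1-2l} - 2$ and $|x'|+|y'| \leq w_{n+2-2l} - 3$ by combining the integrality of $x'$ and $y'$ with the strict monotonicity of $w$. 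Since $l \leq \lfloor n/2 \rfloor$ forces $n+1-2l \geq 1$, the gap $w_{m+1} - w_m \geq 2$ available for $m \geq 1$ supplies exactly the slack needed. This places $x'+y'i \in S_{n+1-2l}$, and Theorem \ref{octo_union} then puts $x+yi = 2^l(x'+y'i)$ inside $2^l S_{n+1-2l} \subseteq B_{n+1}$.

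The main obstacle is the final sharpening: naive division of the defining inequalities of $Oct_n$ by $2^l$ misses the $S_{n+1-2l}$ thresholds by fractional amounts whose size depends on $l$, so the argument must be broken into the small cases $l = 0$, $l = 1$, and $l \geq 2$, each recovering the needed bound by a different combination of floor rounding and the growth of $w_m$. The restriction $n+1-2l \geq 1$ forced by $l \leq \lfloor n/2 \rfloor$ is exactly what prevents falling into the slow-growth regime at $m = 0$ where $w_1 - w_0 = 1$ would not suffice.
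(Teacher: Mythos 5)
Your proposal is correct and follows essentially the same route as the paper: bound $l$ by $\lfloor n/2 \rfloor + 1$ (the paper cites Lemma \ref{max_power} for this), use the hypothesis to get $l \leq \lfloor n/2 \rfloor$, and then show $x+yi \in 2^l S_{n+1-2l}$ so that Theorem \ref{octo_union} gives membership in $B_{n+1}$. The only cosmetic difference is that the paper sharpens the bounds before dividing, using $2^l \mid w_n$ and $2^l \mid w_{n+1}$ to write $|x|,|y| \leq w_n - 2^l = 2^l(w_{n-2l}-1)$ and $|x|+|y| \leq w_{n+1}-2^l$, whereas you divide first and recover the $S_{n+1-2l}$ thresholds by integrality and the gap $w_{m+1}-w_m \geq 2$ for $m \geq 1$ --- the same computation in a different order.
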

The prove Corollary \ref{one_up}, we need the following two lemmas, which are simple to verify.

\begin{lemma}\label{max_power} If $x +yi \in Oct_n$ and $2^l \mid \gcd(x,y)$, then $l \leq \lfloor \frac{n}{2} \rfloor + 1$.  
 If $l = \lfloor \frac{n}{2} \rfloor + 1$ and $n = 2k$, 
then $x +yi \in 2^{k+1}\{ \pm 1, \pm i \}$.  
If $n = 2k +1$, then $x + yi \in 2^{k+1}\{ \pm 1, \pm i, \pm 1 \pm i \}$.
\end{lemma}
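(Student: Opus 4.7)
The plan is a direct arithmetic verification against the two inequalities defining $Oct_n$, splitting on the parity of $n$ and using the closed forms $w_{2k} = 3 \cdot 2^k$ and $w_{2k+1} = 4 \cdot 2^k$. I tacitly assume $x + yi \neq 0$ (the hypothesis is otherwise content-free), and using the invariance of $Oct_n$ under sign changes and the swap $(x,y) \mapsto (y,x)$, I reduce to $|x| \geq |y| \geq 0$, so $|x| \geq 1$.

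For the first claim $l \leq \lfloor n/2 \rfloor + 1$, I use $2^l \leq |x| \leq w_n - 2$ and check in each parity that $w_n - 2 < 2^{k+2}$, where $k = \lfloor n/2 \rfloor$. In other words, both $3 \cdot 2^k - 2$ and $4 \cdot 2^k - 2$ are strictly less than $2^{k+2}$, so $l < k+2$, that is, $l \leq k+1$.

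For the equality case $l = k+1$, the key observation is that $w_n - 2 < 2 \cdot 2^{k+1}$ in both parities, forcing each of $|x|$ and $|y|$, as a nonnegative multiple of $2^{k+1}$, to lie in $\{0, 2^{k+1}\}$. Together with $x + yi \neq 0$ this already gives $x + yi \in 2^{k+1}\{\pm 1, \pm i, \pm 1 \pm i\}$, which is the desired conclusion when $n = 2k+1$. When $n = 2k$, the secondary $Oct_n$ bound becomes $|x| + |y| \leq 4 \cdot 2^k - 3 < 2 \cdot 2^{k+1}$, which rules out $|x| = |y| = 2^{k+1}$ and leaves only $x + yi \in 2^{k+1}\{\pm 1, \pm i\}$.

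There is no substantive obstacle; the lemma is a bookkeeping exercise. The one point that merits care is the parity-sensitive comparison between $w_{n+1} - 3$ and $2 \cdot 2^{k+1}$: the sum-bound is just tight enough to exclude the four diagonal corners when $n$ is even, but lax enough to allow them when $n$ is odd, which is exactly why the conclusion splits into the two different corner sets.
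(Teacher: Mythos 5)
Your verification is correct, and it is exactly the direct bookkeeping the paper intends: the paper offers no proof of this lemma at all, simply declaring it (together with Lemma \ref{identities}) ``simple to verify,'' so your case-by-case comparison of $2^l$ against $w_n-2$ and of the corner sum against $w_{n+1}-3$ fills in precisely what was left to the reader, and your explicit exclusion of $x+yi=0$ (where $\gcd(x,y)=0$ makes the divisibility hypothesis vacuous) is a point the paper's statement glosses over. The only quibble is a harmless aside: for $n=2k+1$ with $k=0$ the sum bound does \emph{not} actually admit the diagonal corners $\pm 2\pm 2i$, but since the lemma only asserts containment in the eight-element set, and your odd-case argument never invokes the sum bound, this does not affect the proof.
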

\begin{lemma}\label{identities} 
The following identities hold:
\begin{multicols}{2}
\begin{itemize}
 \item $w_{n+2} = 2 w_n$
 \item $w_{n-2} \leq w_{n+1} - w_n \leq w_{n-2}$
 \item $2(w_{n+1} - w_n) \leq w_n$
 \item $3(w_{n+1} - w_n) \leq w_{n+1}$
 \item If $2^{l+1} < w_n$, then $l\leq \lfloor \frac{n}{2} \rfloor$.
 \item If $2^{l+1} \leq w_n$, then $2^l \leq w_{n+1} - w_n$ .
 \item If $w_{n+1} - w_n \leq 2^l$, then $\lfloor \frac{n+1}{2} \rfloor \leq l$.
 \item If  $l \leq \lfloor \frac{n}{2} \rfloor$, then $2^l | (w_n - 2^l)$.
 \item If $l \leq \lfloor \frac{n}{2} \rfloor$, then $2^l \leq w_{n+1} - w_n$ .
 \item If $l \leq \lfloor \frac{n}{2} \rfloor$, then $w_{n+1} - w_n + 2^l \leq w_n$.

 \end{itemize}
 \end{multicols}
 \end{lemma}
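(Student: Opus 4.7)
The plan is to verify each bullet by splitting into the two cases $n = 2k$ and $n = 2k+1$, substituting the closed forms $w_{2k} = 3 \cdot 2^k$ and $w_{2k+1} = 4 \cdot 2^k$. After this substitution, every claim reduces to a comparison of small multiples of powers of $2$, and the entire lemma becomes a short checklist rather than a structural argument.

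The first identity, $w_{n+2} = 2 w_n$, is immediate from $w_{2k+2} = 3 \cdot 2^{k+1} = 2 w_{2k}$ and $w_{2k+3} = 4 \cdot 2^{k+1} = 2 w_{2k+1}$. For the bullets involving the increment $w_{n+1} - w_n$, I would compute once that $w_{2k+1} - w_{2k} = 2^k$ and $w_{2k+2} - w_{2k+1} = 2^{k+1}$. All of the subsequent inequalities such as $2(w_{n+1} - w_n) \leq w_n$ and $3(w_{n+1} - w_n) \leq w_{n+1}$ then follow by substitution in each parity; for instance, when $n = 2k+1$ one obtains $2 \cdot 2^{k+1} = 4 \cdot 2^k = w_{2k+1}$, hitting equality.

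For the assertions involving $2^l$, the uniform observation is that $w_n = c_n \cdot 2^{\lfloor n/2 \rfloor}$ with $c_n \in \{3, 4\}$. Hypotheses like $2^{l+1} < w_n$ or $w_{n+1} - w_n \leq 2^l$ then translate directly to bounds on $l$ relative to $\lfloor n/2 \rfloor$, and the divisibility claim $2^l \mid (w_n - 2^l)$ for $l \leq \lfloor n/2 \rfloor$ is immediate because $2^{\lfloor n/2 \rfloor}$ divides $w_n$ in both parities, hence so does $2^l$.

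The only mild subtlety, and the one place that requires attention, is the boundary $l = \lfloor n/2 \rfloor$: in the even case $w_{2k} = 3 \cdot 2^k$ lies strictly between $2^{k+1}$ and $2^{k+2}$, whereas in the odd case $w_{2k+1} = 2^{k+2}$ is itself a power of $2$, so equality versus strict inequality in a bullet can shift the active parity case. Once this bookkeeping is organized and performed in parallel for the two parities, each item is a one-line check, and there is no real obstacle beyond careful accounting.
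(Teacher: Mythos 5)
Your proposal is correct and matches the paper's approach: the paper gives no written proof at all (it introduces the lemma as ``simple to verify''), and direct substitution of the closed forms $w_{2k}=3\cdot 2^{k}$, $w_{2k+1}=4\cdot 2^{k}$ with a case split on the parity of $n$ is exactly the intended verification, including your correct observation about equality occurring at the boundary cases. The only caveat is the second bullet, which as printed ($w_{n-2}\leq w_{n+1}-w_n\leq w_{n-2}$) is a typo: read literally it asserts $w_{n+1}-w_n=w_{n-2}$, which fails for even $n=2k$ where $w_{n+1}-w_n=2^{k}$ but $w_{n-2}=3\cdot 2^{k-1}$, so your computed increments $w_{2k+1}-w_{2k}=2^{k}$ and $w_{2k+2}-w_{2k+1}=2^{k+1}$ are the right data for what was evidently meant (namely $w_{n-3}\leq w_{n+1}-w_n\leq w_{n-2}$).
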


\begin{proof} (of Corollary \ref{one_up}) If $l =0$, then $x + yi \in S_n \subset B_n \subset B_{n+1}$.  Lemma \ref{max_power} implies $l \leq \lfloor \frac{n}{2} \rfloor$,
so if $l \geq 1$, then Lemma \ref{identities} shows 
\begin{align*}
|x|, |y| & \leq w_n - 2^l = 2^l (w_{n-2l} - 1) \leq 2^l (w_{n -2l +1} -2)\\
\intertext{and}
|x| + |y| & \leq w_{n+1} - 2^l = 2^l (w_{n - 2l +1} -1) \leq 2^l( w_{n - 2l +2} -3).
\end{align*}
These equations show that $x +yi \in 2^l S_{n  - 2l +1}$ and thus, by Theorem \ref{octo_union}, also in $B_n$.
\end{proof}

The sets $B_n$ look like lacy, octagonal snowflakes, and they have several nice properties.  
Like the other sequences ($D_n$, $S_n$, and $Oct_n$), they are nested, as well as closed
under both complex conjugation and  multiplication by units.  Theorem \ref{octo_union} tells us that 
$D_n \subset S_n \subset B_n \subset Oct_n$;
Definition \ref{sets B_n} implies that if $a+bi \in B_n$, then $(1+i)^j (a+bi) \in B_{n+j}$.  Similarly, if $2^j | \gcd(a,b)$ for some $a+bi \in B_n$, then 
$\frac{a}{2^j} + \frac{b}{2^j} i \in B_{n-2j}$.
Definition \ref{sets B_n} also tells us that if $(1 +i)^{n+1} | x$ and $x \in B_{n}$, then $x =0$.  
These properties lead to the following useful result on the sets $B_n$.

\begin{lemma} \label{divides_xy} If $xy \in B_n \setminus 0$, then $x \in B_n \setminus 0$.  
\end{lemma}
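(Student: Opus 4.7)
The plan is to reduce the problem to a self-contained statement about the innermost octagonal sets $D_m$, using Theorem \ref{octo_union}'s decomposition $B_n \setminus 0 = \coprod_{j=0}^n (1+i)^j D_{n-j}$. First I would write $x = (1+i)^a x'$ and $y = (1+i)^b y'$ with $x'$ and $y'$ coprime to $1+i$, and set $j = a+b$. The decomposition forces $xy \in (1+i)^j D_{n-j}$, so $x'y' \in D_{n-j}$; because the $D_m$ are nested (the $w_m$ are increasing) and $a \leq j$, also $x'y' \in D_{n-a}$. Applying the decomposition in the other direction, the goal $x \in B_n$ is equivalent to $x' \in D_{n-a}$. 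Thus everything reduces to the claim: if $x'y' \in D_m$ with $x'$ and $y'$ both coprime to $1+i$ and $x'y' \neq 0$, then $x' \in D_m$.

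For this reduced claim I would split on whether $y'$ is a unit. If $y' \in \{\pm 1, \pm i\}$, then $x' = (y')^{-1}(x'y')$ lies in $D_m$ because every inequality in Definition \ref{octogons} is preserved by multiplication by $\{\pm 1, \pm i\}$. Otherwise $y'$ is a non-unit Gaussian integer coprime to $1+i$; since the Gaussian integers of norm at most $4$ are $0$, the units, and the associates of $1+i$, such a $y'$ must satisfy $\Nm(y') \geq 5$, so $|y'| \geq \sqrt{5}$. Combining the coordinate bounds from $x'y' \in D_m$ gives $|x'y'|^2 \leq 2(w_m - 2)^2$, hence $|x'| \leq \sqrt{2/5}\,(w_m - 2) < w_m - 2$, which immediately yields the coordinate bounds for $x'$. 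For the diagonal constraint I would bound $|\Re x'| + |\Im x'| \leq \sqrt{2}\,|x'| \leq (2/\sqrt{5})(w_m - 2)$ and check that $(2/\sqrt{5})(w_m - 2) \leq w_{m+1} - 3$; combined with the free condition $(1+i) \nmid x'$, this places $x'$ in $D_m$.

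The main obstacle is the final numerical inequality: it is nearly tight at $m = 0$ (the left side is $2/\sqrt{5} \approx 0.894$ and the right side is $1$), so some case analysis using the parity-dependent formula $w_{2k} = 3 \cdot 2^k$, $w_{2k+1} = 4 \cdot 2^k$ is unavoidable. Ultimately it reduces to the clean bound $w_{m+1} \geq (4/3) w_m$ combined with $w_m \geq 3$, both of which are immediate from the definition. Once that inequality is in hand, the rest is a clean assembly using only Theorem \ref{octo_union} and the identities of Lemma \ref{identities}; the geometric content of the lemma is the observation that a non-unit Gaussian integer coprime to $1+i$ is large enough ($|y'| \geq \sqrt{5}$) that dividing by it shrinks an octagon strictly inside itself.
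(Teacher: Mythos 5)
Your proof is correct, but it takes a genuinely different route from the paper's. The paper argues by induction on $n$: if $1+i$ divides $x$ or $y$ it applies the inductive hypothesis to $xy/(1+i)\in B_{n-1}$, and in the remaining case (both factors coprime to $1+i$) it normalizes $x=a+bi$, $y=c+di$ with $a>b\geq 0$, $c>d$ via the eight-fold symmetry and runs an exact three-case analysis on the sign of $d$ to show that $\max(|a|,|b|)$ and $|a|+|b|$ are dominated by the corresponding quantities for $xy$, so $x\in S_n\subset B_n$. You instead strip all powers of $1+i$ at once through the decomposition $B_n\setminus 0=\coprod_{j}(1+i)^jD_{n-j}$ of Theorem \ref{octo_union}, reducing to a self-contained claim about $D_m$, and then replace the coordinate case analysis by a metric estimate: a non-unit coprime to $1+i$ has $\Nm\geq 5$, so dividing by it contracts by $\sqrt{5}$, and Cauchy--Schwarz gives the octagon inequalities once $(2/\sqrt{5})(w_m-2)\leq w_{m+1}-3$ is verified (which does hold, by the parity-split values of $w_m$, or by your observation that $w_{m+1}\geq \tfrac{4}{3}w_m$ and $w_m\geq 3$). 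Your route avoids both the induction and the sign analysis, at the price of a numerical inequality that is nearly tight at $m=0$; the paper's route stays entirely in exact integer inequalities and yields the slightly sharper coordinatewise domination of $x$ by $xy$. One small slip in your justification: the Gaussian integers of norm $4$ are $\pm 2,\pm 2i$, which are associates of $(1+i)^2$ rather than of $1+i$; since they are still divisible by $1+i$, your conclusion that a non-unit $y'$ prime to $1+i$ has $\Nm(y')\geq 5$ is unaffected.
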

\begin{proof}
Proof by induction.  
The hypothesis clearly holds for $xy \in B_0\setminus 0$, as $xy$, $x$, and $y$ are all multiplicative units, 
and $B_0 \setminus 0$ is the set of all the multiplicative units $\Z[i]^{\times}$.

Now suppose that our claim holds for all $j$, $ 0 \leq j \leq n-1$.  
Suppose that $x = a +bi$ and $y = c + di$, so $xy = (ac -bd) + (ad+bc) i \in B_n \setminus 0$. 
We will show that $x=a + bi \in B_n$. 

We may suppose that $(1+i)$ divides neither $x$ nor $y$, because then $\frac{xy}{1+i}$ would be an element of $B_{n-1}$, 
allowing us to apply our induction hypothesis.  
Corollary \ref{you_get_the_whole_set} lets us assume without loss of generality that
$a > b \geq 0$, that $c > d$, and that $a,c >0$.  There are three cases to consider.  

If $d=0$, then $0 \leq b < a \leq ac = \max (ac-bd, ad + bc) \leq w_n -2$ and 
\begin{align*}
0 &<a +b \leq ac +bc = (ac - bd) + (ad +bc) \leq w_{n+1} - 3.\\
\intertext{If $d < 0$, then }
0  &\leq b < a  \leq a +b \leq ac -bd \leq w_n - 2 \leq w_{n+1} -3.\\
\intertext{If $d >0$, then }
 0 &\leq b <a \leq a+b \leq ad+bc \leq w_n -2 \leq w_{n+1} -3.
 \end{align*}

 As $2 \nmid \gcd(a,b)$, $x = a +bi \in S_n$. 
 Theorem \ref{octo_union} tells us that $S_n \subset B_n$, so $x \in B_n$ in all three scenarios.
\end{proof}

\subsection{Motzkin sets and $(1+i)$-ary expansions}

Our proof that $A_{\mathbb{Z}[i], n} = B_n$ uses induction to show containment in both directions.
We start with three lemmas that show containment between our sets under special circumstances.  

\begin{lemma} \label{containment}If $A_{\mathbb{Z}[i], n }= B_n$, then $A_{\mathbb{Z}[i], n+1} \subset B_{n+1}$.
\end{lemma}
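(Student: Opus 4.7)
The plan is to take an arbitrary $\beta \in A_{\mathbb{Z}[i],n+1}$ and exhibit a $(1+i)$-ary expansion of length at most $n+1$. Unwinding Definition~\ref{construction} and the hypothesis $A_{\mathbb{Z}[i],n}=B_n$, I may assume $\beta \notin B_n$, so $B_n \twoheadrightarrow \mathbb{Z}[i]/\beta$; it then suffices to produce $v \in \{0,\pm 1,\pm i\}$ and $\gamma \in B_n$ with $\beta = v + (1+i)\gamma$, since then $\beta \in B_{n+1}$ by Definition~\ref{sets B_n}. Because $v$ is forced modulo $(1+i)$, the proof splits according to whether $(1+i)\mid\beta$.

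For $(1+i) \mid \beta$, write $\beta = (1+i)\gamma$ and take $v = 0$; it suffices to show $\gamma \in B_n = A_{\mathbb{Z}[i],n}$, which by Definition~\ref{construction} reduces to establishing $B_{n-1}\twoheadrightarrow \mathbb{Z}[i]/\gamma$. Given $x \in \mathbb{Z}[i]$, apply the surjection $B_n \twoheadrightarrow \mathbb{Z}[i]/\beta$ to the element $(1+i)x$ to obtain $r \in B_n$ with $r \equiv (1+i)x \pmod{\beta}$; since $\beta = (1+i)\gamma$, the difference $(1+i)x - r$ is divisible by $(1+i)$, forcing $(1+i)\mid r$. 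Theorem~\ref{octo_union} then places $s := r/(1+i)$ in $B_{n-1}$, and dividing the congruence by the non-zero-divisor $(1+i)$ yields $s \equiv x \pmod \gamma$. Hence $B_{n-1}\twoheadrightarrow \mathbb{Z}[i]/\gamma$, so $\gamma \in B_n$ and $\beta = (1+i)\gamma \in B_{n+1}$.

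For $(1+i) \nmid \beta$, there are four candidate quotients $\gamma_v := (\beta - v)/(1+i)$ with $v \in \{\pm 1, \pm i\}$, which are precisely the four Gaussian integers nearest to $\beta/(1+i)$, and I need to show at least one lies in $B_n$. The direct analogue of the previous trick stalls: lifting $(1+i)x + v \pmod \beta$ to some $r = v_0 + (1+i)s \in B_n$ only yields $s \equiv x + (qv - v_0)/(1+i) \pmod{\gamma_v}$, with a residual term depending on the implicit quotient $q$ that is not forced to vanish modulo $\gamma_v$. This is the main obstacle.

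I expect to overcome it by a geometric argument. By Theorem~\ref{octo_union} combined with $(1+i)\nmid \beta$, the conclusion $\beta \in B_{n+1}$ is equivalent to $\beta \in S_{n+1}$, i.e., to the octagonal inequalities $\max(|a|,|b|)\leq w_{n+1}-2$ and $|a|+|b|\leq w_{n+2}-3$. I would prove the contrapositive: using Corollary~\ref{you_get_the_whole_set} to reduce to $a \geq b \geq 0$ and the arithmetic identities in Lemma~\ref{identities}, assume an octagonal inequality fails and exhibit an explicit Gaussian integer $c$ whose coset modulo $\beta$ is not represented in $B_n$, by checking that every candidate small translate $c + q\beta$ falls outside $Oct_n \supseteq B_n$. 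This contradicts $B_n \twoheadrightarrow \mathbb{Z}[i]/\beta$, forcing the octagonal inequalities and thereby $\beta \in B_{n+1}$. The base case $n=0$, where $B_{n-1}$ is undefined, is verified directly from Example~\ref{example_in_G}.
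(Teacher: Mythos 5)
Your first case and the overall reduction are fine, but the lemma's content for odd-norm divisors is entirely concentrated in your second case, and there you have only a plan, not a proof. The central claim of that plan --- that whenever $\beta=a+bi$ violates one of the $S_{n+1}$ inequalities you can exhibit a coset of $\beta$ all of whose representatives avoid $Oct_n$ --- is unproven, and it is not a routine pigeonhole: $\Nm(\beta)$ can be well below $|Oct_n|$. For instance, with $n=2k$ and only $\max(|a|,|b|)\geq w_{n+1}-1$ failing, $\Nm(\beta)$ can be about $16\cdot 4^k$ while $|Oct_n|$ (and even $|B_n|$) is about $28\cdot 4^k$; concretely for $n=2$, $\beta=7+2i$ has $\Nm(\beta)=53<57=|Oct_2|$, so nothing forces a coset to miss $Oct_n$, and in such tight cases you would have to exploit the ``holes'' of $B_n$ inside $Oct_n$ rather than the outer octagon alone. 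In effect you are proposing to prove a hard converse (non-surjectivity of $B_n$ for every $\beta$ outside the region), which is a substantial geometric theorem in its own right and is exactly the kind of work the paper never needs for this lemma. A secondary issue: in your $(1+i)\mid\beta$ case, passing from $B_{n-1}\twoheadrightarrow \Z[i]/\gamma$ to $\gamma\in A_{\Z[i],n}$ uses $B_{n-1}\subseteq A_{\Z[i],n-1}$, which is not part of the stated hypothesis $A_{\Z[i],n}=B_n$ (it is available where the lemma is applied, but the lemma as stated, and as proved in the paper, needs only the single hypothesis).

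The paper's proof avoids all of this with one trick: since $\beta\in A_{\Z[i],n+1}$, divide the single element $(1+i)^{n+1}$ by $\beta$ to get $(1+i)^{n+1}=q\beta+r$ with $r\in A_{\Z[i],n}=B_n$. Then $q\beta=(1+i)^{n+1}-r\in B_{n+1}\setminus 0$ (nonzero because a nonzero element of $B_n$ cannot be divisible by $(1+i)^{n+1}$), and Lemma \ref{divides_xy} --- divisors of nonzero elements of $B_{n+1}$ lie in $B_{n+1}$ --- gives $\beta\in B_{n+1}$ immediately, with no case split on divisibility by $(1+i)$ and no geometry. If you want to salvage your approach, you should either prove the missing non-representability statement in full (for both failing inequalities and all shapes of $\beta$, allowing representatives in $Oct_n\setminus B_n$), or redirect your effort through a divisor-closedness statement like Lemma \ref{divides_xy}.
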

\begin{proof}  Given $a+bi \in A_{\mathbb{Z}[i], n+1}$, there exists some $q \in \mathbb{Z}[i]$ and $r \in A_{\mathbb{Z}[i], n}=B_n$ such that 
$(1+i)^{n+1} = q(a+bi) +r$.  Rearranging terms reveals that 
\begin{equation*}
q(a+bi) = (1+i)^{n+1} - r \in B_{n+1} \setminus 0,
\end{equation*}
so  $a+bi \in B_{n+1}$ by Lemma \ref{divides_xy}.
\end{proof}

\begin{lemma}\label{multiply_by_1+i}  If $A_{\mathbb{Z}[i], j} = B_j$ for $j \in \{n, n-1\}$, then $(1+i)B_n \subset A_{\mathbb{Z}[i], n+1}$.
\end{lemma}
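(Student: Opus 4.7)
The plan is to show directly that $B_n = A_{\Z[i], n}$ surjects onto $\Z[i]/((1+i)x)$ for each nonzero $x \in B_n$, which by the definition of the Motzkin sets places $(1+i)x$ in $A_{\Z[i], n+1}$. The zero case is immediate since $0 \in A_{\Z[i], 0}$.

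The core idea is a Chinese-remainder-style decomposition exploiting that $\Z[i]/(1+i) \cong \mathbb{F}_2$. Given an arbitrary $\alpha \in \Z[i]$, write $\alpha = s + (1+i)\beta$ with $s \in \{0, 1\}$ representing the parity of $\alpha$ modulo $(1+i)$ and $\beta \in \Z[i]$. Any $r$ of the form $s + (1+i)\gamma$ with $\gamma \equiv \beta \pmod{x}$ then satisfies $r - \alpha = (1+i)(\gamma - \beta) \in (1+i)x\Z[i]$, so $r \equiv \alpha \pmod{(1+i)x}$. The task therefore reduces to producing such a $\gamma$ in a manner that makes the resulting $r$ land in $B_n$.

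Here I would invoke Motzkin's definition to extract the surjection $B_{n-1} \twoheadrightarrow \Z[i]/x$ for every nonzero $x \in B_n$: if $\phi_{\Z[i]}(x) = k \leq n$, then by construction of $A_{\Z[i],k}$ we have $A_{\Z[i], k-1} \twoheadrightarrow \Z[i]/x$, and $A_{\Z[i], k-1} \subset A_{\Z[i], n-1} = B_{n-1}$ because the Motzkin sets are nested (units and $x=0$ are handled trivially). This produces $\gamma_0 \in B_{n-1}$ with $\gamma_0 \equiv \beta \pmod{x}$, and I set $r := s + (1+i)\gamma_0$.

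The last step, which I expect to be the only substantive verification, is confirming $r \in B_n$. Writing $\gamma_0 = \sum_{j=0}^{n-1} v_j (1+i)^j$ with digits $v_j \in \{0, \pm 1, \pm i\}$, multiplication by $(1+i)$ shifts every digit up one position, so $(1+i)\gamma_0 = \sum_{j=1}^{n} v_{j-1}(1+i)^j$ has an empty constant slot. Adding $s \in \{0,1\} \subset \{0, \pm 1, \pm i\}$ fills that slot and produces the expansion $r = s \cdot (1+i)^0 + \sum_{j=1}^n v_{j-1}(1+i)^j$ of length $\leq n$ with admissible digits, so $r \in B_n$. Since $\alpha$ was arbitrary, the surjection $B_n \twoheadrightarrow \Z[i]/((1+i)x)$ is established, whence $(1+i)x \in A_{\Z[i], n+1}$. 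The main obstacle is simply organizing the parity/digit bookkeeping; no octagonal or triangle machinery from Section~\ref{cosets} is needed for this lemma, which is why the argument is short.
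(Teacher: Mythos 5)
Your proof is correct and takes essentially the same route as the paper's: divide $\alpha$ by $(1+i)$, reduce the resulting quotient modulo $x$ via the surjection $B_{n-1}=A_{\Z[i],n-1} \twoheadrightarrow \Z[i]/x$ supplied by the induction hypothesis and Motzkin's construction, and check that the reassembled remainder $s+(1+i)\gamma_0$ lies in $B_n$ by the digit-shift observation. The only cosmetic differences are your choice of the parity representative $s\in\{0,1\}$ where the paper uses a remainder in $A_{\Z[i],0}=\{0,\pm 1,\pm i\}$, and your explicit justification (via nestedness of the Motzkin sets) of the step the paper leaves implicit.
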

\begin{proof}  Given $x \in \mathbb{Z}[i]$, we can write $x = q(1+i) +r$ for some $q \in \Z[i]$ and $r \in A_{\Z[i],0}$.  
Suppose that  $b \in B_n = A_{\mathbb{Z}[i], n}$, so we can expand $q$ as $q = q' b + r'$, where 
$r' \in A_{\Z[i], n-1}$.
Then 
\begin{align*}
(1+i)q + r &= (1+i)q' b + (1+i)r' +r\\
\intertext{and thus}
x &= q' (1+i)b + ((1+i)r'+r).
\end{align*}
The element $(1+i)r' + r \in B_n = A_{\mathbb{Z}[i], n}$, so $A_{\mathbb{Z}[i], n} \twoheadrightarrow \Z[i]/b(1+i)$ and  $b(1+i) \in A_{\mathbb{Z}[i], n+1}$.
\end{proof}

\begin{lemma} \label{subset_containment} If $A_{\Z[i], j} = B_j$ for $j \in \{n, n-1\}$, and if 
$\left ( B_{n+1} \setminus (1+i)\Z[i] \right )\subset A_{\Z[i], n+1} $, then 
$A_{\Z[i], n+1} = B_{n+1}$.
\end{lemma}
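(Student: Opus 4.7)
The plan is to establish the missing inclusion $B_{n+1} \subset A_{\Z[i], n+1}$, since Lemma \ref{containment} already gives the reverse inclusion $A_{\Z[i], n+1} \subset B_{n+1}$ from the hypothesis $A_{\Z[i], n} = B_n$. The stated hypothesis covers every element of $B_{n+1}$ not divisible by $(1+i)$, so the only elements left to check are $0$ together with the nonzero elements of $B_{n+1} \cap (1+i)\Z[i]$. The first case is trivial, as $0 \in A_{\Z[i],0} \subset A_{\Z[i], n+1}$.

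For a nonzero $x \in B_{n+1}$ with $(1+i) \mid x$, the idea is to strip off a single factor of $(1+i)$ so that Lemma \ref{multiply_by_1+i} applies. I would invoke Theorem \ref{octo_union} to obtain the disjoint decomposition
\[ B_{n+1} \setminus 0 \;=\; \coprod_{j=0}^{n+1} (1+i)^j D_{n+1-j}. \]
Because each $D_m$ consists by definition of elements with $2 \nmid (x+y)$, which is exactly the condition $(1+i) \nmid (x+yi)$, our $x$ must lie in $(1+i)^k D_{n+1-k}$ for some $k \geq 1$. Setting $y := x/(1+i)$ then places $y$ in $(1+i)^{k-1} D_{n+1-k}$, and this piece is one of the terms of the analogous decomposition of $B_n \setminus 0$ (since $(k-1) + (n+1-k) = n$), so $y \in B_n$.

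The induction hypothesis $A_{\Z[i], n} = B_n$ yields $y \in A_{\Z[i], n}$, and Lemma \ref{multiply_by_1+i}, which is applicable here precisely because both $A_{\Z[i], n} = B_n$ and $A_{\Z[i], n-1} = B_{n-1}$ are assumed, then gives $x = (1+i)y \in A_{\Z[i], n+1}$, completing the proof. The only mild obstacle is correctly bookkeeping the $(1+i)$-adic exponent when sliding from the level-$(n+1)$ decomposition down to the level-$n$ decomposition; once that index shift is in place, the argument is essentially a one-line application of the two preceding lemmas.
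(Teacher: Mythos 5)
Your proof is correct and follows essentially the same route as the paper: split $B_{n+1}$ by divisibility by $(1+i)$, handle the non-divisible part by hypothesis, handle the divisible part via Lemma \ref{multiply_by_1+i}, and get the reverse inclusion from Lemma \ref{containment}. The only difference is that you explicitly justify, via the decomposition in Theorem \ref{octo_union}, that every $(1+i)$-divisible element of $B_{n+1}$ lies in $(1+i)B_n$, a fact the paper asserts without detailed argument.
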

\begin{proof}  The set $B_{n+1}$ is the union of its elements that are divisible by $(1 +i)$, and the elements that are not.  The set of elements of $B_{n+1}$ that 
are divisible by $(1+i)$ is the set $(1 +i) B_n$, i.e., 
\[\{x + yi \in B_{n+1}: (1 +i) | (x +iy)\} = (1 +i) B_n.\]  Lemma \ref{multiply_by_1+i} shows that, under our assumptions,
$(1 +i)B_n \subset A_{\Z[i], n+1}$, so if $\{x + yi \in B_{n+1} : (1 +i) \nmid (x +iy)\} \subset A_{\Z[i], n+1}$, then all of $B_{n+1} \subset A_{\Z[i], n+1}$. 
Then, under our assumptions, $A_{\Z[i], n+1} \subset B_{n+1}$ by Lemma \ref{containment}, so $B_{n+1} = A_{\Z[i], n+1}$.
\end{proof}

 \section{Main Result}\label{Main Result}
We reduce proving $A_{\Z[i], n} = B_n$  to showing that $\mathscr{S}_{a+bi} \subset U = \bigcup _{q \in \Z[i]} (B_n + q(a+bi))$ for all 
$a +bi \in B_{n+1}\setminus (1+i)\Z[i]$.  
We use the geometry of our sets $D_n$, $S_n$, $B_n$, and $Oct_n$ to prove containment. 

Section \ref{iden} introduces some necessary lemmas, and Section \ref{meat} uses them to prove two technical propositions that allow us to apply 
Proposition \ref{subset_containment}.
Each of the two propositions has a long proof, broken up into cases.  
Having done all the heavy lifting,
we conclude with a short argument in subsection \ref{finally} that $A_{\Z[i], n} = B_n$.  

\subsection{Necessary Lemmas}\label{iden}

\begin{lemma}\label{oct_translate} Suppose that $a + bi \in \Z[i]\setminus (1+i)\Z[i]$ and that $u \in \mathbb{Z}[i]^{\times} = \{\pm 1, \pm i\}$.
If  $x+yi \in (Oct_n + u(a+bi))$ and $2|(x +y)$, then $x+yi \in (B_n + u(a+bi))$.
\end{lemma}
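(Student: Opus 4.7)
The plan is to unpack what membership in $Oct_n + u(a+bi)$ gives us, and then use the parity hypothesis together with $(1+i) \nmid (a+bi)$ to force the ``offset'' element into $D_n$, which sits inside $B_n$ by Theorem~\ref{octo_union}.

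First I would write $x+yi = \alpha + u(a+bi)$ for some $\alpha \in Oct_n$; the goal is to show $\alpha \in B_n$. The key parity observation is that $(1+i) \nmid (a+bi)$ is equivalent to $a+b$ being odd, and that multiplication by a unit $u \in \{\pm 1, \pm i\}$ preserves this parity property: for each choice of $u$ the quantity $\mathrm{Re}(u(a+bi)) + \mathrm{Im}(u(a+bi))$ equals $\pm(a+b)$ or $\pm(a-b)$, all of which are odd when $a+b$ is odd. Hence $u(a+bi)$ has odd coordinate-sum. Combined with the hypothesis $2 \mid (x+y)$, we get that $\mathrm{Re}(\alpha) + \mathrm{Im}(\alpha)$ is (even)$-$(odd)$=$odd, so $(1+i) \nmid \alpha$.

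Now I would split on whether $\alpha = 0$. If $\alpha = 0$, then $0 \in B_n$ trivially. Otherwise $\alpha \in Oct_n \setminus 0$, and the parity conclusion above places $\alpha$ in the set $D_n$ by Definition~\ref{octogons}. By Theorem~\ref{octo_union}, $D_n$ is the $j=0$ summand of $\coprod_{j=0}^n (1+i)^j D_{n-j} = B_n \setminus 0$, so $\alpha \in B_n$. Either way $x+yi = \alpha + u(a+bi) \in B_n + u(a+bi)$, as desired.

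The argument is essentially bookkeeping — there is no real obstacle once one notices that the parity of $\mathrm{Re} + \mathrm{Im}$ (i.e.\ divisibility by $1+i$) is a unit-invariant of Gaussian integers, so the hypothesis about $x+y$ being even transfers directly onto $\alpha$ and pushes it from the ambient octagon $Oct_n$ down into the ``odd-sum'' layer $D_n$ of $B_n$.
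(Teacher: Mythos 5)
Your proof is correct and follows essentially the same route as the paper: subtract $u(a+bi)$, note the difference lies in $Oct_n$, and use the parity of the coordinate sum (equivalently $(1+i)$-divisibility, which is unit-invariant) to place it in $D_n \subset B_n$ via Theorem~\ref{octo_union}. The case $\alpha = 0$ you split off is in fact already excluded by your parity computation, so it is harmless but unnecessary.
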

\begin{proof}  If $x+yi \in (Oct_n + u(a+bi))$, then $c +di = (x +yi) - u(a+bi)$ is an element of $Oct_n$.  
Because $(1+i) | (x +yi)$ and $(1 +i) \nmid (a+bi)$, we see that $(1+i) \nmid (c+di)$ and thus $c+di \in D_n \subset B_n$ by Theorem \ref{octo_union}.
\end{proof}

\begin{lemma}\label{broom}  Suppose that $(1+i) \nmid (a+bi)$ and that $2^k \parallel \gcd(x,y), k \geq 1$.  
If any one of $(a-x) + (b-y)i$, $(a-y) + (b+x)i$, or $-(b+x) + (a-y)i \in Oct_n$, then 
$x+yi \in U = \bigcup_{q\in \Z[i]} (B_n + q(a+bi))$.
\end{lemma}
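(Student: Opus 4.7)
The plan is to handle the three possibilities case by case: in each case I will algebraically identify $x+yi$ as lying in a translate of $Oct_n$ by a unit multiple of $a+bi$, and then invoke Lemma \ref{oct_translate} to upgrade the containment from $Oct_n$ to $B_n$.

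The first observation is that $2^k \parallel \gcd(x,y)$ with $k \geq 1$ forces $2 \mid x$ and $2 \mid y$, so in particular $2 \mid (x+y)$.  This is exactly the parity hypothesis required to apply Lemma \ref{oct_translate}, so, combined with $(1+i) \nmid (a+bi)$, showing $x+yi \in Oct_n + u(a+bi)$ for some unit $u \in \{\pm 1, \pm i\}$ will suffice to conclude $x+yi \in B_n + u(a+bi) \subset U$.  The three identities that tie the hypothesis to the conclusion, each verified by direct computation, are
\begin{align*}
(a-x)+(b-y)i &= (a+bi) - (x+yi),\\
(a-y)+(b+x)i &= (a+bi) + i(x+yi),\\
-(b+x)+(a-y)i &= i(a+bi) - (x+yi).
\end{align*}

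The defining inequalities of $Oct_n$ from Definition \ref{octogons} are invariant under negation of either coordinate and under swapping the real and imaginary parts, so $Oct_n$ is closed under multiplication by every unit in $\{\pm 1, \pm i\}$.  In the first case, the identity gives $x+yi - (a+bi) = -\bigl[(a-x)+(b-y)i\bigr] \in Oct_n$, so $x+yi \in Oct_n + (a+bi)$.  In the second case, the identity rearranges to $i(x+yi) \in Oct_n - (a+bi)$; multiplying through by the unit $-i$ and using closure of $Oct_n$ under that unit yields $x+yi \in Oct_n + i(a+bi)$.  In the third case, the identity rearranges directly to $x+yi - i(a+bi) = -\bigl[-(b+x)+(a-y)i\bigr] \in Oct_n$, so $x+yi \in Oct_n + i(a+bi)$.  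Applying Lemma \ref{oct_translate} in each case (which is legal because $2 \mid (x+y)$ and $(1+i) \nmid (a+bi)$) upgrades $Oct_n$ to $B_n$, giving $x+yi \in B_n + u(a+bi) \subset U$.  There is no real obstacle beyond careful bookkeeping: the whole proof reduces to the three algebraic identities, the unit-symmetry of $Oct_n$, and one invocation of Lemma \ref{oct_translate}.
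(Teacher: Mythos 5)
Your proof is correct and takes essentially the same route as the paper: it rests on the same three identities expressing the given elements as $(a+bi)-(x+yi)$, $(a+bi)+i(x+yi)$, and $i(a+bi)-(x+yi)$, and your invocation of Lemma \ref{oct_translate} (legitimate, since $k\geq 1$ gives $2\mid(x+y)$) is just a packaged form of the paper's inline observation that the relevant element is coprime to $1+i$ and hence lies in $D_n\subset B_n$, followed by the same rearrangement into a unit translate.
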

\begin{proof}  
As $(1+i) | (x+yi)$ and $(1+i) \nmid (a+bi)$, $(1+i)$ divides neither 
$(a-x) + (b-y)i = (a+bi) - (x+yi)$ nor $-(b+x) + (a-y)i = i(a+bi) - (x+yi)$.  
It certainly does not divide
$(a-y) + (b+x)i = (a + bi) +i(x +yi).$  These three elements are all in $Oct_n \setminus (1+i) \Z[i] $, i.e., they are in $D_n \subset B_n$.  
Some computational housekeeping then shows that $x+yi \in U$.
\end{proof}

\begin{lemma} \label{small} If $a+bi \in B = ( B_{n+1} \cap Oct_n) \setminus(  B_n \cup (1+i) B_n)$, then $B_n \twoheadrightarrow \mathbb{Z}[i]/(a+bi)$.
\end{lemma}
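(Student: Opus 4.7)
The plan is to reduce to Lemma \ref{triangle}. First, $(1+i) \nmid (a+bi)$: Theorem \ref{octo_union} identifies $(1+i) B_n$ with $B_{n+1} \cap (1+i)\Z[i]$, and $a+bi$ is excluded from $(1+i) B_n$ by hypothesis. Corollary \ref{you_get_the_whole_set} then lets me assume $a > b \geq 0$. It suffices to prove $\mathscr{S}_{a+bi} \subset U := \bigcup_{q \in \Z[i]} (B_n + q(a+bi))$, after which Lemma \ref{triangle} delivers $B_n \twoheadrightarrow \Z[i]/(a+bi)$.

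Fix $x + yi \in \mathscr{S}_{a+bi}$, so $0 \leq x, y$ and $x+y < a$. Because $a+bi \in Oct_n$ gives $a \leq w_n-2$, and $w_n - 2 \leq w_{n+1} - 3$ by Lemma \ref{identities}, the point $x + yi$ itself lies in $Oct_n$. If $x = y = 0$ or $\gcd(x, y)$ is odd, then Theorem \ref{octo_union} puts $x + yi$ into $\{0\} \cup S_n \subset B_n \subset U$.

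In the remaining case $2^k \parallel \gcd(x, y)$ with $k \geq 1$, I invoke Lemma \ref{broom} by exhibiting one of $E_1 = (a-x)+(b-y)i$ or $E_2 = (a-y)+(b+x)i$ in $Oct_n$. The coordinate bounds are routine: $|a-x|, |a-y| \leq a \leq w_n - 2$, $|b-y| \leq \max(b, y) \leq w_n - 2$, and $|b+x| = b + x < y + x < a$ in the subcase $y > b$. For the sum bound I split on the signs of $y - x$ and $y - b$: if $y \leq x$ or $y \leq b$, then $|a-x|+|b-y|$ equals either $(a+b)-(x+y)$ or $(a-b)-(x-y)$, each at most $a+b \leq w_{n+1}-3$; if $y > x$ and $y > b$, then $(a-y)+(b+x) = (a+b)-(y-x) \leq a+b$. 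The main obstacle is this sign bookkeeping; once each sum is written as $a \pm b \pm (x \pm y)$ with explicit signs, every bound collapses to $a+b \leq w_{n+1}-3$, and Lemma \ref{broom} places $x + yi$ in $U$.
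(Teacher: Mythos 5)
Your proposal is correct, but it handles the hard part of the lemma by a genuinely different and shorter route than the paper. Both arguments open the same way: deduce $(1+i)\nmid (a+bi)$, normalize to $w_n-2\geq a>b\geq 0$, reduce via Lemma \ref{triangle} to showing $\mathscr{S}_{a+bi}\subset U$, and dispose of the points of $\mathscr{S}_{a+bi}$ with odd $\gcd(x,y)$ by putting them in $S_n\subset B_n$ (the paper splits instead on whether $(1+i)\mid(x+yi)$ and uses $D_n$, a cosmetic difference). The divergence is in the doubly-even case $2^k\parallel\gcd(x,y)$, $k\geq 1$: the paper tries to place $x+yi$ itself in $B_n$ via the decomposition into sets $2^jS_{n-2j}$, which requires the sharper bounds $x,y\leq w_n-2^{k+1}$ and $x+y\leq w_{n+1}-3\cdot 2^k$, hence a delicate treatment of the boundary failures (the case $n=2k$, $x=y=2^k$, and the axis case $z=w_n-2^k$), leaning on Lemmas \ref{max_power} and \ref{identities} and an induction framing with base cases $n\in\{0,1\}$. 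You never try to put $x+yi$ into $B_n$: you hand one of the translates $(a+bi)-(x+yi)$ or $(a+bi)+i(x+yi)$ to Lemma \ref{broom}, and since that lemma only asks for membership in the much larger set $Oct_n$, the crude facts $a,b\leq w_n-2$, $a+b\leq w_{n+1}-3$, $x+y<a$ together with your sign split on $y$ versus $x$ and $b$ suffice; I verified the coordinate bounds and the three sum identities $(a+b)-(x+y)$, $(a-b)-(x-y)$, and $(a+b)-(y-x)$, and every case closes. Your argument thus needs no induction, no $n\geq 2$, and neither Lemma \ref{max_power} nor Lemma \ref{identities} (indeed it uses only $a+bi\in Oct_n\setminus(1+i)\Z[i]$, not $a+bi\in B_{n+1}\setminus B_n$); what the paper's finer analysis buys is the extra information that the generic even point of $\mathscr{S}_{a+bi}$ already lies in $B_n$, which the lemma does not require. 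One small citation point: Corollary \ref{you_get_the_whole_set} concerns membership in $A_{\Z[i],n}$, so for the normalization $a>b\geq 0$ you should instead argue, as the paper does, that $B_n$ (and the set $B$) is closed under units and complex conjugation, so the surjectivity claim is invariant under replacing $a+bi$ by an associate or its conjugate; the content is the same, and your identification of $(1+i)B_n$ with $B_{n+1}\cap(1+i)\Z[i]$ via Theorem \ref{octo_union} is a nice explicit justification of a step the paper leaves implicit.
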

\begin{proof} Proof by induction. 
Simple computations show this holds true for $n \in \{0, 1\}$, so for the rest of the proof, assume that $n \geq 2$.  
For ease of notation, we again define $U = \bigcup_{q \in \mathbb{Z}[i]} (B_n + q(a+bi))$.  
The set $B$ is closed under complex conjugation and multiplication by units, so as $(1 + i) \nmid a + bi$, 
we can assume without loss of generality that $w_n - 2 \geq a > b \geq 0$.
By applying Proposition \ref{triangle}, it suffices to show that $\mathscr{S}_{a+bi} \subset U$ to prove our claim.

As $0 <a \leq w_n -2$, the set $\mathscr{S}_{a +bi} \subset Oct_n$, so if $x +iy \in \mathscr{S}_{a+bi}$ and $(1+i) \nmid (x +yi)$, then $x +iy \in D_n \subset  B_n \subset U$.
For the rest of this proof, assume that $x +yi \in \mathscr{S}_{a+bi}$ and that $(1+i) | (x +yi)$; 
we must show that $x +yi \in U$.  We do this by  showing that either $x +yi \in B_n$ or 
 $x +yi \in Oct_n + u(a+bi)$ for some $ u \in \Z[i]^{\times}$, as then $x +yi \in U$ by Lemma \ref{oct_translate}.

Let us first consider $x +yi$, where $x, y \neq 0$.  
Suppose that $2^k \parallel \gcd(x,y)$, so that $2^k \leq  x,y < x+y \leq w_n -2^k $ (as $x +y < a \leq w_n -2$) 
and thus $2^k \leq x,y  \leq w_n - 2^{k+1}$.  
As $2^{k+1} < w_n$, we see by Lemma \ref{identities} that $k\leq \lfloor \frac{n}{2} \rfloor$ 
and that \[x + y \leq w_n - 2^k + (w_{n+1} - w_n - 2^k) = w_{n+1} - 2^{k+1}.\]
If $x + y \leq w_{n+1} - 3 \cdot 2^k$, then $x +yi \in 2^k S_{n-2k} \subset B_n \subset U$.

If not, then $x + y = w_{n+1} - 2^{k+1} < a \leq w_n -2$ and thus $w_{n+1} - 2^{k+1} \leq w_n - 2^k$.
We rearrange to see that $w_{n+1} - w_n \leq 2^k$ and thus $\lfloor \frac{n+1}{2} \rfloor \leq k$ by Lemma \ref{identities}.

In this situation, $\lfloor \frac{n+1}{2} \rfloor \leq k \leq \lfloor \frac{n}{2} \rfloor$, so $n = 2k$, $k \geq 1$, $a > x + y = 2^{k+1}$, and $x= y = 2^k$. 
We know that $2 \nmid \gcd (a-2^k, b-2^k)$, that 
$|a-2^k| , |b - 2^k| \leq w_n - 2^k - 2  < w_n -2$, and that 
\begin{align*}
|a-2^k| + |b-2^k| & \leq \max \{ a+b - 2^{k+1}, a-b\} \\
& \leq \max \{ w_{n+2} - 2^{k+1} - 3, w_n -3\}\\
& \leq w_{n+1} - 3,
\end{align*}
so $(a-x) + (b-y)i \in D_n \subset B_n$ and $x +yi \in U$. 

Now we consider $x+yi$, where one of the coordinates is zero.  
Label the non-zero coordinate $z$.  
If $2^k \parallel z$ and if $2^k \leq z \leq w_n - 2^{k+1}$, 
then $k \geq 1$ and $z \leq w_{n+1} - 3 \cdot 2^k$, demonstrating that $z, zi \in 2^k S_{n-2k} \subset B_n \subset U$.  
If $2^k \leq z = w_n - 2^k $, then $0 \leq b, |a-z| < w_n -2$.  As $2 \nmid \gcd (a-z, b)$ and 
\begin{align*}
0 < b + |a-z| &\leq \max (a +b-z, b + z-a)\\
&\leq \max (w_n + 2^k -3, w_n - 2^k -1)\\
& \leq w_{n+1} - 3,
\end{align*}, 
$(a-z) + bi \in D_n \subset B_n$, allowing us to conclude that both
$z , zi \in U$ by Lemma \ref{broom}.
\end{proof}

\subsection{Propositions at the heart of our proof}\label{meat}
Our main proof requires us to show that if $A_{\Z[i], j} = B_j$ for all $0 \leq j \leq n$, and if $a+bi \in B_{n+1} \setminus ( B_n \cup (1+i)\Z[i])$, then 
$\mathscr{S}_{a+bi} \subset U = \bigcup_{q \in \mathbb{Z}[i]} (B_n + q(a+bi))$.  
Lemma \ref{small} established our claim for the $a+bi \in B_{n+1} \setminus ( B_n \cup (1+i)\Z[i])$ that are also in $Oct_n$.
We now prove it for the $a+bi \in B_{n+1} \setminus ( B_n \cup (1+i)\Z[i])$ that are not in $Oct_n$.  
First, Proposition \ref{inside_the_octogon} shows that, under our assumptions, $\mathscr{S}_{a+bi} \cap Oct_n \subset U$.  
Proposition \ref{outside_the_octogon} then demonstrates that, under the same assumptions, $(\mathscr{S}_{a+bi}\setminus Oct_n) \subset U$ as well.  

\begin{prop}\label{inside_the_octogon}  Suppose that $A_{\Z[i], n} = B_n$.  
If $a +bi \in B_{n+1} \setminus (Oct_n \cup (1+i) \Z[i])$, if $a >b \geq 0$, and if $x+yi \in \mathscr{S}_{a+bi} \cap Oct_n$, then $x +yi \in U = \bigcup_{q \in \mathbb{Z}[i]} (B_n + q(a+bi))$. 
\end{prop}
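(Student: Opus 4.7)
The plan is to show that every $x+yi \in \mathscr{S}_{a+bi} \cap Oct_n$ either already lies in $B_n \subset U$ or can be shifted by a unit multiple of $a+bi$ into $Oct_n$, whereupon Lemma \ref{oct_translate} puts it in $U$. The first reduction is immediate: if $(1+i) \nmid (x+yi)$, then $x+yi$ lies in $Oct_n$ and is not divisible by $1+i$, so $x+yi \in D_n \subset B_n \subset U$ by Theorem \ref{octo_union}. We are left to handle $(1+i) \mid (x+yi)$, i.e.\ $x+y$ even. The goal in this case is to exhibit a unit $u \in \Z[i]^{\times}$ for which $(x+yi) - u(a+bi) \in Oct_n$; Lemma \ref{oct_translate} then gives $x+yi \in B_n + u(a+bi) \subset U$. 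When furthermore $2 \mid \gcd(x,y)$, Lemma \ref{broom} already lists three explicit candidate translates (corresponding essentially to $u = 1$ and $u = \pm i$), and it suffices to place one of them in $Oct_n$.

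To select $u$, split according to the boundary of $Oct_n$ that $a+bi$ has crossed. Because $a > b \geq 0$, $(1+i) \nmid (a+bi)$, and $a+bi \in B_{n+1} \setminus Oct_n$, either (A) $a \leq w_n - 2$ while $a + b > w_{n+1} - 3$, or (B) $a > w_n - 2$ (in which case $a \leq w_{n+1} - 2$, since $a+bi \in B_{n+1}$). In Case (A) the natural choice is $u = 1$: the translate is $-(a-x) - (b-y)i$, and one verifies $|a-x| = a-x \leq w_n - 2$ and $|y-b| \leq w_n - 2$ from $0 \leq x, y$ and $a, b \leq w_n - 2$, while the $L^1$ bound splits into the sub-cases $y \geq b$ (where $|a-x| + |y-b| = (a-x) + (y-b)$ is controlled by the triangle constraint $x+y < a$) and $y < b$ (where $|a-x| + |y-b| = (a+b) - (x+y)$, so that $x+y \leq w_{n+1} - 3$ from $x+yi \in Oct_n$ combined with $a+b \leq w_{n+2} - 3 = 2w_n - 3$ gives the required bound, possibly after reverting to $u = i$ for points close to the real axis). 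In Case (B), the shift $u = 1$ still succeeds whenever $x$ is large enough relative to $a - (w_n - 2)$; for $x$ too small, the shift $u = i$ yields $(x+b) + (y-a)i$, and the bounds follow from $y < a \leq w_{n+1} - 2$, $b \leq a$, and the $Oct_n$ constraint on $x+yi$.

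The principal obstacle is this case-by-case verification: Cases (A) and (B) each decompose into sub-regions of $\mathscr{S}_{a+bi} \cap Oct_n$ on which different units work, and each coordinate inequality must be checked using repeated recourse to Lemma \ref{identities} (especially $w_{n+2} = 2w_n$, $2(w_{n+1} - w_n) \leq w_n$, and $3(w_{n+1} - w_n) \leq w_{n+1}$) to convert the defining constraints of $B_{n+1}$ into those of $Oct_n$. I expect the combination of the two unit shifts $u = 1$ and $u = i$ (together with the symmetries of $Oct_n$ that Lemma \ref{broom} exploits) to cover every relevant $x+yi$, so that nothing further beyond the elementary geometry of the octagons is required.
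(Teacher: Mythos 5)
Your overall strategy coincides with the paper's: odd points of $\mathscr{S}_{a+bi}\cap Oct_n$ lie in $D_n\subset B_n$, and even points are to be placed in $B_n$ itself or shifted by $u(a+bi)$, $u\in\{1,i\}$, into $Oct_n$, after which Lemma \ref{oct_translate} (equivalently Lemma \ref{broom}) finishes. The gap is in the case analysis that is supposed to decide which unit works: you organize it by which face of $Oct_n$ the element $a+bi$ has crossed, and at least one of the resulting concrete claims is false. Take $n=6$ (so $w_6=24$, $w_7=32$, $w_8=48$), $a+bi=22+9i$ and $x+yi=20i$. Then $a+bi\in S_7\subset B_7$, $(1+i)\nmid(a+bi)$, $a+b=31>w_7-3$ while $a\le w_6-2$, and $20i\in\mathscr{S}_{a+bi}\cap Oct_6$ with $x+y$ even and $20i\notin B_6$ (one checks $20i\notin S_6\cup 2S_4\cup 4S_2\cup 8S_0$). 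This is your Case (A) with $y\ge b$, where you assert the $u=1$ shift is ``controlled by the triangle constraint''; but $(x+yi)-(a+bi)=-22+11i$ has $|{-22}|+|11|=33>w_7-3=29$, so it is not in $Oct_6$, and only the $u=i$ shift $(x+b)+(y-a)i=9-2i\in D_6$ works. Likewise, in your Case (A) sub-case $y<b$, the needed bound $(a+b)-(x+y)\le w_{n+1}-3$ does not follow from $a+b\le w_{n+2}-3$ and $x+y\le w_{n+1}-3$ alone, since $w_{n+2}-3>w_{n+1}-3$; your hedges (``possibly after reverting to $u=i$,'' ``I expect \dots to cover every relevant $x+yi$'') sit exactly where the proof has to be done.

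The missing idea, which the paper supplies, is to classify first which even points of $Oct_n$ are \emph{not} already in $B_n$: writing $2^l\parallel\gcd(x,y)$, Theorem \ref{octo_union} forces $x=w_n-2^l$, or $y=w_n-2^l$, or ($x,y\le w_n-2^{l+1}$ and $x+y\ge w_{n+1}-2^{l+1}$), with the maximal-power points of Lemma \ref{max_power} checked separately. Only after this reduction, which confines the troublesome points to specific boundary strips, does the choice among the three translates of Lemma \ref{broom} become manageable, with sub-cases governed by comparisons such as $b\ge y$, $b+x\le y$, and $a-x>w_n-2$, rather than by which constraint $a+bi$ violates. To salvage your outline you would need either to import that classification or to refine your region split until every sub-region comes with a verified unit; as written, one step fails outright and the coverage claim is asserted rather than proved.
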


\begin{proof}    

Suppose that $x +yi \in Oct_n$ and that $2^l \parallel \gcd(x,y)$. 
If $x +yi \in S_n \subset B_n$, then $x+yi$ is certainly an element of $U$, so we will assume for the rest of this proof that 
$x+yi \notin S_n$, so $1 \leq l \leq \lfloor \frac{n}{2} \rfloor$. 
Lemma \ref{max_power} states  that $l \leq \lfloor \frac{n}{2} \rfloor +1$.  
If $x+yi \in Oct_n \cap \mathscr{S}_{a+bi}$ and $l = \lfloor \frac{n}{2} \rfloor +1$, 
then $x +yi \in \{ 2^{k+1}, 2^{k+1} i\}$ when $n = 2k$, and $x \in \{2^{k+1}, 2^{k+1} i, 2^{k+1}(1+i) \}$ when $n = 2k+1$.  
Checking all five cases shows that at least one of $(a+bi) - (x+yi)$ and $i(a+bi) - (x+yi)$ must be an element of $B_n$.  
We therefore assume for the rest of the proof that $l \leq \lfloor \frac{n}{2} \rfloor$,
so $1\leq l \leq \lfloor \frac{n}{2} \rfloor$ and  $x +yi \in B_{n+1}$ by Corollary \ref{one_up}.   
Because $a > b \geq 0$ and $a+bi \notin Oct_n$, we observe
 that $a > w_n -2$.  

As $x +yi \in Oct_n$, we note that $x, y \leq w_n - 2^l$ and $x+y \leq w_{n+1} -\max(3,2^l)$.
Theorem \ref{octo_union} shows $x+yi \in B_n$ if and only if $x, y \leq w_n - 2^{l+1}$
and $x+y \leq w_{n+1} - 3 \cdot 2^l$.  Our element
$x+yi \in Oct_n \cap (B_{n+1} \setminus B_n)$ then falls into one of three cases:  either $x = w_n - 2^l$; $y = w_n - 2^l$;  
or $x,y \leq w_n - 2^{l+1}$ and $x+y \geq w_{n+1} - 2^{l+1}$.  We address each of the three cases below.

\underline{$\mathbf{x = w_n - 2^l}$:}  By our assumptions and Lemma \ref{identities}, 
\begin{align*}
0  \leq a -x  &\leq (w_{n+1}-2) - (w_n - 2^l)  \leq 2(w_{n+1} - w_n) -2  \leq w_n -2.\\
\intertext{
As $x+y \leq w_{n+1} - 2^l$, we also see that $y \leq w_{n+1} - w_n$.  
This then implies that }
|b-y| & \leq \max (b,y) \leq \max (w_n -2, w_{n+1} - w_n ) \leq w_n -2,\\
\intertext{and thus}
|a-x| + |b-y| & = \max ( a+ b - (x+y) , (a-b) + y -x) \\
& \leq \max ( w_{n+2} -3 - w_n + 2^l, w_{n+1} -3 + w_{n+1} - w_n - (w_n + 2^l) ) \\
& \leq \max (w_{n+1} - 3, 2(w_{n+1} - w_n)-2^l - 3 ) \\
&= w_{n+1} -3.
\end{align*}

We conclude that $(a-x) + (b-y)i \in Oct_n$ and thus $x+yi \in U$ by Lemma \ref{broom}.\\

\underline{$\mathbf{y = w_n - 2^l}$:}   
When $y = w_n -2^l$, then $0 \leq  a-y  \leq 2(w_{n+1} - w_n )- 2 \leq w_n -2.$
The condition $a-x > w_n -2$ is equivalent to  $b+x \leq a+b - w_n +1$;
the right hand side is bounded above by $w_{n+2} - 3 - w_n + 1 = w_n -2$.  
The assumption is also equivalent to $x < a-w_n +2$.
As $a-w_n +2 < w_{n+1} - w_n$, note that $x \leq w_{n+1} - w_n - 2^l$.  
We then see that if $a -x > w_n -2$, then
\begin{align*}
|a-y| + |b+x| &\leq a+b -y + x \\
&\leq w_{n+2} - 3 - w_n +2^l + w_{n+1} - w_n - 2^l \\
&= w_{n+1} - 3,
\end{align*}
demonstrating that $(a-y) + (b+x)i \in Oct_n$.  

Similarly, if $b+ x \leq y = w_n - 2^l \leq w_n -2$, then the odd sum $|a-y| + |b+x| \leq a -y +y  =a \leq w_{n+1} -2$, so $|a-y| + |b+x| \leq w_{n+1} -3$ and 
$(a-y) + (b+x) i \in Oct_n$. Lemma \ref{broom} shows that $x +yi \in U$ when either $a-x > w_n -2$ or $b+x \leq y$.

Let us now suppose that $a-x \leq w_n -2$ and $b+x >y$.  Note that
$|b-y| \leq  w_n -2$.  If $b \geq y$, then 
\begin{align*}
|a-x| + |b-y| &= (a+b) - (x+y) \leq w_{n+2} - 3 - w_n + 2^l \leq w_{n+1} - 3; \\
\intertext{otherwise, $b < y < b+x$ and }
|a-x| + |b-y| &= a + (y - (b+x)) \leq a-1 \leq w_{n+1} - 3.
\end{align*}
Either way, $(a-x) + (b-y) i \in Oct_n$ and thus $x+yi \in U$ by Lemma~\ref{broom}.\\

\underline{$\mathbf{x,y \leq w_n - 2^{l +1} \text{ and } x + y \geq w_{n+1} - 2^{l+1}}$:}
These conditions imply that $|b-y| \leq w_n -2$, that $\min(x,y) \geq w_{n+1} - w_n$, and that 
\[w_{n+1} - w_n < a-x, a-y \leq w_{n+1} -2 - (w_{n+1} -w_n) = w_n -2.\]
If $b \geq y$, then 
\[|a -x| + |b-y| = (a+b) - (x+y) \leq w_{n+2} - 3 - w_{n+1} + 2^{l+1} = w_{n+1} - 3\]
 and $(a-x) + (b-yi) \in Oct_n$ by Lemma \ref{identities}, as desired.

If $b + x \leq y \leq w_n -2$, then  $|a-y| + |b+x| \leq a- y + y  \leq w_{n+1} -2$ and thus the odd sum $|a-y| + |b+x|$ is bounded above by $w_{n+1} -3$, showing that 
$(a-y) + (b+x) i \in Oct_n$.

We are then left with when $b+x > y > b$, implying that 
\[|a-x| + |b-y| = a +y - (b+x) \leq a-1 \leq w_{n+1} - 3,\]
demonstrating that $(a - x ) + (b-y) i \in Oct_n$.  In all three scenarios, $x +yi \in U$ by Lemma \ref{broom}.

\end{proof} 

\begin{prop} \label{outside_the_octogon}  Suppose that $A_{\Z[i], n} = B_n$.  
If $a+bi \in B_{n+1} \setminus (Oct_n \cup (1+i)\Z[i])$, if $a > b \geq 0$, and if $x +yi \in \mathscr{S}_{a+bi} \setminus Oct_n$, then 
$x +yi \in U = \bigcup_{q \in \mathbb{Z}[i]} (B_n + q(a+bi))$. 
\end{prop}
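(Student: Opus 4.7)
The plan is to follow Proposition~\ref{inside_the_octogon}'s template, splitting into two disjoint cases based on which inequality of $Oct_n$ is violated by $x+yi$. Since the triangle condition forces $x + y \leq a - 1 \leq w_{n+1} - 3$, the sum constraint of $Oct_n$ is automatic, so $x+yi \notin Oct_n$ forces $x \geq w_n - 1$ or $y \geq w_n - 1$; these cannot occur simultaneously because $2(w_n-1) > w_{n+1}-3$ by Lemma~\ref{identities}. The standing assumptions give $a \in [w_n - 1, w_{n+1} - 2]$, $a + b \leq w_{n+2} - 3 = 2w_n - 3$ and hence $b \leq w_n - 2$, with $a + b$ odd.

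In Case~1 ($x \geq w_n - 1$) I take $z := (x+yi) - (a+bi) = (x-a)+(y-b)i$. The bounds $|z_1| = a - x \leq w_{n+1} - w_n - 1$ and $|z_2| = |y - b| \leq w_n - 3$ (the latter using the Case~1-only refinement $b \leq w_n - 3$, forced by $b = w_n - 2 \Rightarrow a = w_n - 1$, conflicting with $x < a$), together with a sign-case on $y - b$, put $z \in Oct_n$. If $(1+i) \mid (x+yi)$, then $(1+i) \nmid z$ by a parity argument modulo $(1+i)$, so $z \in D_n \subset B_n$ by Theorem~\ref{octo_union} and $x+yi \in B_n + (a+bi) \subset U$. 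If $(1+i) \nmid (x+yi)$, then $(1+i) \mid z$; set $j := v_2(z)$. For $j = 0$, $(1+i)^2 \nmid z$ gives $z \in S_n \subset B_n$ at once. For $j \geq 1$, the simultaneous divisibility of $z_1, z_2$ by $2^j$ together with the identities $w_{n+1} - w_n = 2^{\lceil n/2 \rceil}$ and $w_{n+2} = 2w_n$ from Lemma~\ref{identities} upgrade the bounds to $|z_i| \leq w_n - 2^{j+1}$ and $|z_1|+|z_2| \leq w_{n+1} - 3\cdot 2^j$, so $z/2^j \in S_{n-2j}$ and $z \in 2^j S_{n-2j} \subset B_n$.

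Case~2 ($y \geq w_n-1$, $x < w_n-1$) is the mirror image of Case~1, using the translate $z' := (x+yi) - i(a+bi) = (x+b)+(y-a)i$. From $x \leq a - 1 - y$ and $a + b \leq w_{n+2} - 3$ I get $|z'_1| = x + b \leq w_n - 3$; also $|z'_2| = a - y \leq w_{n+1} - w_n - 1$ and $|z'_1|+|z'_2| \leq w_{n+1} - 4$, so $z' \in Oct_n$. Because $-i \equiv 1 \pmod{1+i}$, we have $(1+i) \mid z'$ if and only if $(1+i) \nmid (x+yi)$, so the parity / $v_2$ analysis runs exactly as in Case~1 and yields $z' \in B_n$, giving $x+yi \in B_n + i(a+bi) \subset U$.

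The main obstacle is the $v_2 \geq 1$ branch of the odd-parity sub-case: the ambient $Oct_n$ bounds are not tight enough on their own to force $z$ (or $z'$) into $B_n$. The key observation is that $|z_1| \in [2^j, w_{n+1} - w_n - 1]$ combined with $2^j \mid |z_1|$ forces $j \leq \lfloor n/2 \rfloor$ (so the target set $S_{n-2j}$ is well defined) and simultaneously tightens the bounds on $|z_2|$ and on $|z_1|+|z_2|$ by the same powers of two that shrink $S_{n-2j}$ relative to $Oct_n$. This mirrors the bookkeeping already present in Proposition~\ref{inside_the_octogon}, and the small cases $n \leq 2$ where Case~1 or Case~2 is vacuous can be verified by direct computation.
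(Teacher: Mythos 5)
Your overall frame (split into $x>w_n-2$ and $y>w_n-2$, translate by $a+bi$ or $i(a+bi)$, then analyze the power of $2$ dividing the translated element) is the same as the paper's, but the step you yourself flag as the main obstacle is where the proof breaks. The claim that for $j\geq 1$ the $Oct_n$ bounds on $z$ (or $z'$) automatically ``upgrade'' to the $2^jS_{n-2j}$ bounds $|z_1|,|z_2|\leq w_n-2^{j+1}$, $|z_1|+|z_2|\leq w_{n+1}-3\cdot 2^j$ is false: the boundary value $w_n-2^j$ is divisible by $2^j$ and lies strictly above $w_n-2^{j+1}$, and in your Case~2 it genuinely occurs. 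Concretely, take $n=5$ (so $w_5=16$, $w_6=24$, $w_7=32$), $a+bi=19+10i\in B_6\setminus(Oct_5\cup(1+i)\Z[i])$, and $x+yi=2+15i\in\mathscr{S}_{a+bi}\setminus Oct_5$. Here $(1+i)\nmid(x+yi)$ and $z'=(x+yi)-i(a+bi)=12-4i$ has $j=2$, but $12>w_5-2^{3}=8$, so $z'\notin 4S_1$ and hence $z'\notin B_5$ by Theorem \ref{octo_union}; in fact no unit multiple $u(a+bi)$ puts $x+yi$ into $B_5+u(a+bi)$. The paper handles exactly this boundary sub-case ($b+x\geq w_n-2^l$, forcing $l\geq 2$) by passing to the translate $q=1+i$: it shows $(1+i)(a+bi)-(x+yi)=(a-b-x)+(a+b-y)i$ lies in $Oct_n$ and is not divisible by $1+i$, hence lies in $D_n\subset B_n$ (in the example, $7+14i\in D_5$), giving $x+yi\in B_n+(1+i)(a+bi)$. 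That use of a non-unit quotient $q=1\pm i$ is the missing idea in your proposal.

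Two smaller points. First, in your Case~1 the analogous boundary case $|y-b|=w_n-2^j$ is in fact vacuous, but this needs an argument you did not give: it requires $b\geq w_n-2^j$ while $x\geq w_n-1$ and $2^j\mid(a-x)$ force $a\geq w_n-1+2^j$, contradicting $a+b\leq w_{n+2}-3=2w_n-3$; the paper instead covers it (without noticing vacuity) by the same $(1\pm i)(a+bi)$ device. Second, your reduction of the $j=0$ and $(1+i)\mid(x+yi)$ branches to $S_n$ and $D_n$ membership is fine and matches the paper's use of Lemma \ref{broom}, so the proof can be repaired exactly by inserting the paper's boundary argument; as written, however, it has a genuine gap.
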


\begin{proof} Our assumptions imply that $b \leq w_n -2 <a$.   As $x +yi \in \mathscr{S}_{a+bi} \setminus Oct_n$,
$x +y \leq a-1 \leq w_{n+1} -3$, so  either $x > w_n -2$ or $y > w_n -2$.
We address the two cases below.

\underline{$\mathbf{x > w_n -2}:$} As $x+yi \in \mathscr{S}_{a+bi}$, our bound implies that 
\[\max (y, 2^l) \leq a-x \leq w_{n+1} - w_n -1< w_n -2 < x.\]
Suppose that $2^l \parallel (a-x, b-y),$
so that 
\begin{equation}\label{heart}
0 \leq y< a-x \leq w_{n+1} - w_n - 2^l < 2(w_{n+1} - w_n - 2^l) \leq w_n - 2^{l+1},
\end{equation}
and $l \leq \lfloor \frac{n}{2} \rfloor$ by Lemma \ref{identities}.
If $|b-y| \leq w_n - 2^{l+1}$, then 
\[|a-x| + |b-y|  \leq (w_{n+1} - w_n - 2^l) + (w_n - 2^{l+1}) = w_{n+1} - 3\cdot 2^l,\]
and $(a-x) + (b-y)i \in 2^l S_{n-2l} \subset B_n,$ so Lemma \ref{broom} places $x +yi \in U$.

If $|b-y| > w_n - 2^{l+1}$, then $b-y = w_n -2^l$, as $0 \leq y <  w_n - 2^{l+1}$ and $0\leq b \leq w_n -2$, thereby forcing $l \geq 1$.
Lemma \ref{identities} then shows that, as $l\leq \lfloor \frac{n}{2} \rfloor$,  
\begin{align*}
\max (x, 2^l) \leq a - b + y & \leq (w_{n+1} -2) - (w_n - 2^{l})  \leq 2(w_{n+1} -w_n) -2 \leq w_n -2,\\
\intertext{that}
0 < a +b - x &\leq (w_{n+2} -3) - (w_n -1) = w_n -2,\\
\intertext{and that}
|a-b+y| + |a+b -x| & = (a+b) + (a-x) -(b-y) \\
&\leq (w_{n+2} -3) + (w_{n+1} - w_n -2^l) - (w_n -2^l) \\
&= w_{n+1} -3.
\end{align*}
We noted previously that $l \geq 1$, so $2 | (a-x) + (b-y)i$.  As $(1+i) \nmid (a+bi)$, it follows that $(1+i) \nmid (x+yi)$ and thus $(1+i)$ does not divide
$(1+i)(a+bi) - i(x+yi) = (a-b+y) + (a+b-x)i$.
We conclude that 
$(a-b+y) + (a+b-x) i \in D_n \subset B_n$ and thus
$x +yi \in (B_n + (1-i)(a+bi)) \subset U$.

\underline{$\mathbf{y > w_n -2}:$} 
Suppose that 
$2^l \parallel (a-y, b+x)$.  We apply Lemma \ref{identities} to see that
\begin{equation}\label{med}
0 < a-y \leq w_{n+1} - w_n - 2^l < 2(w_{n+1} - w_n - 2^l) \leq w_n - 2^{l+1},
\end{equation}
and $l \leq \lfloor \frac{n}{2} \rfloor.$
If $b+x \leq w_n - 2^{l+1}$, then 
\[|a-y| + |b+x| \leq (w_{n+1} - w_n - 2^l) + (w_n - 2^{l+1}) = w_{n+1} - 3\cdot 2^l\]
and $(a- y) + (b+x)i \in 2^l S_{n - 2l} \subset B_n$, setting
$x +yi \in  U$ by Lemma \ref{broom}.

If $b + x > w_n - 2^{l+1}$, then 
\begin{equation}\label{needed?}
w_n - 2^l \leq b+x < b + (a-y)  \leq w_n -2 <a,
\end{equation}
and $l \geq 2$. Equation \ref{needed?} just showed that $0 < a+b - y \leq w_n -2$, so as
\begin{align*}
|a - b-x| = a - (b+x) & \leq w_{n+1} - 2 - (w_n - 2^l) \leq  w_n -2\\
\intertext{and}
|a -b-x| + |a + b - y| & \leq (a -y) + (a +b) - (b+x) \\
&\leq (w_{n+1} - w_n - 2^l) + (w_{n+2} - 3) +(2^{l} - w_n) \\
&=w_{n+1} -3,
\end{align*}
we see that $(a-b -x ) + (a+b -y )i \in Oct_n$.  
As $l \geq 2$, $(1 +i)$ divides $(a-y) +(b+x)i = (a+bi) + i(x+yi)$.
We deduce that $(1 +i) \nmid (x+yi)$, and thus  $(1+i)$ does not divide
$(a -b-x) + (a+b -y)i = (1+i)(a+bi) - (x+yi)$.  
We conclude that $(a-b-x) + (a+b-y)i \in D_n \subset B_n$ and that 
$x+yi \in (B_n + (1+i)(a+bi)) \subset U$.
\end{proof}

\subsection{Main Results}\label{finally}
\begin{theorem} (Lenstra, \cite{Lenstra})\label{main_result} For $n \geq 0$, $A_{\mathbb{Z}[i],n} = \phi_{\Z[i]}^{-1}([0,n])= B_n$.
\end{theorem}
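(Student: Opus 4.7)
The plan is to prove the two equalities separately. The equality $A_{\Z[i],n} = \phi_{\Z[i]}^{-1}([0,n])$ is immediate from Motzkin's Lemma (\ref{Motzkins_Lemma}), so the real content is to establish $A_{\Z[i],n} = B_n$, which I will do by induction on $n$. For the base case, I would check directly from Example \ref{example_in_G} and Definition \ref{sets B_n} that $A_{\Z[i],0} = B_0 = \{0, \pm 1, \pm i\}$, handling $n=1$ as well if needed to bootstrap the hypotheses of the later lemmas.

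For the inductive step, assume $A_{\Z[i], j} = B_j$ for all $j \leq n$ and aim to show $A_{\Z[i], n+1} = B_{n+1}$. By Lemma \ref{subset_containment}, it suffices to show $B_{n+1} \setminus (1+i)\Z[i] \subset A_{\Z[i], n+1}$. Pick $a+bi$ in this set; by Corollary \ref{you_get_the_whole_set}, which transports membership through unit multiplication and complex conjugation, I may assume $a > b \geq 0$. I then split into three cases based on where $a+bi$ sits relative to $B_n$ and $Oct_n$.

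In the first case, $a+bi \in B_n$, and the inductive hypothesis trivially gives $a+bi \in A_{\Z[i],n} \subset A_{\Z[i],n+1}$. In the second case, $a+bi \in (B_{n+1}\cap Oct_n) \setminus B_n$; since $a+bi \notin (1+i)\Z[i]$ in particular rules out membership in $(1+i)B_n$, Lemma \ref{small} applies and yields $B_n = A_{\Z[i],n} \twoheadrightarrow \Z[i]/(a+bi)$, placing $a+bi$ in $A_{\Z[i],n+1}$. In the third and most delicate case, $a+bi \in B_{n+1} \setminus (Oct_n \cup (1+i)\Z[i])$; Propositions \ref{inside_the_octogon} and \ref{outside_the_octogon} together show that the entire triangle $\mathscr{S}_{a+bi}$ is contained in $U = \bigcup_{q \in \Z[i]} (B_n + q(a+bi))$, at which point Lemma \ref{triangle} (noting $B_n$ is closed under units and $(1+i) \nmid a+bi$) gives $B_n \twoheadrightarrow \Z[i]/(a+bi)$, hence $a+bi \in A_{\Z[i], n+1}$.

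The main obstacle, already surmounted by the preceding sections, is the geometric verification that $\mathscr{S}_{a+bi} \subset U$ in case three; once Propositions \ref{inside_the_octogon} and \ref{outside_the_octogon} are in hand, the final theorem is essentially a bookkeeping exercise that glues together Lemma \ref{subset_containment}, Lemma \ref{small}, Lemma \ref{triangle}, and the symmetry provided by Corollary \ref{you_get_the_whole_set}. The induction then closes and Motzkin's Lemma supplies the remaining equality with $\phi_{\Z[i]}^{-1}([0,n])$.
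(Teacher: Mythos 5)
Your proposal is correct and follows essentially the same route as the paper: base cases from Example \ref{example_in_G}, reduction via Lemma \ref{subset_containment} and the symmetry of Corollary \ref{you_get_the_whole_set}, then the split into the $Oct$ case handled by Lemma \ref{small} and the non-$Oct$ case handled by Propositions \ref{inside_the_octogon} and \ref{outside_the_octogon} together with Lemma \ref{triangle}. The only differences are cosmetic (indexing the induction at $n+1$ rather than $n$ and applying the WLOG normalization up front), so nothing further is needed.
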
 

\begin{proof}  Proof by induction.  
Example \ref{example_in_G} computes our base cases and shows that $A_{\mathbb{Z}[i],n} = B_n$ when $n =0,1,$ and $2$.  
Suppose that $n \geq 2$ and $A_{\mathbb{Z}[i],j} = B_j$ for all $j < n$.   
If $(B_n \setminus (1+i)\Z[i]) \subset A_{\Z[i],n}$, then $A_{\mathbb{Z}[i],n} = B_{n}$ by Lemma \ref{subset_containment}.
It is clear that if $a + bi \in B_{n-1} = A_{\Z[i], n-1}$, then $a +bi \in A_{\Z[i], n}$. 
To prove our theorem, it therefore suffices to prove that if $a + bi \in B_n \setminus (B_{n-1} \cup (1+i) \Z[i])$, then $a + bi \in A_{\Z[i], n}$.  

Lemma \ref{small} shows that if $a+bi \in B_n \setminus (B_{n-1} \cup (1+i) \Z[i])$ and $a + bi \in Oct_{n-1}$, then 
$B_{n-1} \twoheadrightarrow \Z[i]/(a+bi)$. 
As $B_{n-1} = A_{\Z[i], n-1}$, $a+bi \in A_{\Z[i], n}$.

If $a + bi \notin Oct_{n-1}$, it is certainly not in $B_{n-1}$, so the set of $a+bi \in B_n \setminus (B_{n-1} \cup (1+i) \Z[i])$ that are not in $Oct_{n-1}$ 
is the set $B_n \setminus (Oct_{n-1} \cup (1+i) \Z[i])$. Suppose that $a + bi \in B_n \setminus (Oct_{n-1} \cup (1+i) \Z[i])$, that $\alpha = \max (|a|, |b|)$, and that 
$\beta = \max (|a|, |b|)$.  
As $\alpha > \beta \geq 0$, Proposition  \ref{inside_the_octogon} says that 
$\mathscr{S}_{\alpha + \beta i} \cap Oct_{n-1} \subset U = \bigcup_{q \in \Z[i]} (B_{n-1} + q (a+bi))$ and 
Proposition \ref{outside_the_octogon} says that $\mathscr{S}_{\alpha + \beta i} \setminus Oct_{n-1} \subset U$.
The union $\mathscr{S}_{\alpha + \beta i} \subset U$ and $B_{n-1}$ is closed under multiplication by units, so 
$B_{n-1} = A_{\Z[i], n-1} \twoheadrightarrow \Z[i]/(\alpha + \beta i)$ by Lemma \ref{triangle}.  
As $\alpha + \beta i \in A_{\Z[i], n}$, $a+bi \in A_{\Z[i], n}$ by Corollary \ref{you_get_the_whole_set}.
We have now shown that $B_n \setminus (B_{n-1} \cup (1+i) \Z[i]) \subset A_{\Z[i],n}$, as required.  
\end{proof}

We can now prove Theorem \ref{pre-images} and describe the sets $\phi_{\Z[i]}^{-1}(n)$.
\begin{proof} (of Theorem \ref{pre-images})
As Theorem \ref{main_result} shows that $\phi_{\Z[i]}^{-1}([0,n]) = B_n$, it follows that, for $n \geq 1$,
\begin{align*}
\phi_{\Z[i]}^{-1}(n) &= B_n \setminus B_{n-1}\\
& = \coprod_{j=1}^{\lfloor n/2 \rfloor} 2^j S_{n-2j}  \setminus \left (\coprod_{j=0}^{\lfloor (n-1)/2 \rfloor} 2^j S_{n-2j-1} \right ).
\end{align*}
Then, for $k \geq 0$,
\begin{align*}
\phi_{\Z[i]}^{-1}(2k+1) &= B_{2k+1} \setminus B_{2k}\\
& = \coprod_{j=1)}^{\lfloor n/2 \rfloor} 2^j ( S_{2(k-j)+1} \setminus S_{2(k-j)}) \\
& = \displaystyle \coprod _{j=0}^{k}
\left ( 
a+bi:
\begin{array}{c}
2^j \parallel (a+bi); |a|, |b|\leq w_n - 2^{j+1}; \\
|a| + |b| \leq w_{n+1} - 3 \cdot 2^j ,\\
\text{ and either } \max(|a|, |b|) > w_{n-1} - 2^{j+1} \\
\text{ or } |a| + |b| > w_{n} - 3 \cdot 2^j 
\end{array}
\right )\\
\intertext{ and for $k \geq 1$,}
\phi_{\Z[i]}^{-1}(2k) &= B_{2k} \setminus B_{2k-1}\\
& = (2^k S_0) \cup \coprod_{j=1)}^{\lfloor n/2 \rfloor} 2^j ( S_{2(k-j)+1} \setminus S_{2(k-j)}) \\
& = \begin{array}{c}
 \{\pm 2^k, \pm 2^k i \} \cup \\
 \displaystyle \coprod _{j=0}^{k-1}
\left ( 
a+bi:
\begin{array}{c}2^j \parallel (a+bi); |a|, |b|\leq w_n - 2^{j+1};\\
  |a| + |b| \leq w_{n+1} - 3 \cdot 2^j ,\\
\text{ and either } \max(|a|, |b|) > w_{n-1} - 2^{j+1} \\
\text{ or } |a| + |b| > w_{n} - 3 \cdot 2^j 
\end{array}
\right ).
\end{array}
\end{align*}
\end{proof}

\section{Application: Answering Samuel's question}\label{Application}

As mentioned in Sections~\ref{introduction} and \ref{history}, Pierre Samuel computed $|\phi_{\Z[i]}^{-1} (n)|$ for $n \in [0,8]$ (\cite{Samuel}, p. 290).  
He did not compute $|\phi_{\Z[i]}^{-1}(9)|$, presumably because the sets involved became so large that the computations became unwieldy.  
After all, $|\phi_{\Z[i]}^{-1}(8)| = 3364$ and $A_{\Z[i],8} = 6457$ (see Table).

In this section, we will describe the naive method to find $|\phi_{\Z[i]}^{-1}(9)|$ using techniques known when Samuel wrote his his survey.
Then we will describe the (still exponential) techniques implied by Lenstra's theorem to compute $|\phi_{\Z[i]}^{-1}(9) |$.
Lastly, we present a closed-form exponential function that computes $|\phi_{\Z[i]}^{-1}(9) |$.
Appendix A is a table  presenting 
$|\phi_{\Z[i]}^{-1}(n) |$ and $|A_{\Z[i], n}|$ for $n \in [0,\ldots, 20]$
and Appendix B contains Sage code used to do this section's calculations.  
To clarify, the last subsection introduces a closed-form exponential function; the previous subsections require doing exponentially many operations.

\subsection{Before Lenstra}

We present a reasonable method to calculate $|\phi_{\Z[i]}^{-1}(9) |$ with the knowledge Samuel had when he wrote his survey \cite{Samuel}.
He had computed $|\phi_{\Z[i]}^{-1}(n) |$ for $n \in [0, \ldots, 8]$, so he knew that $|A_{\Z[i],8}| = 6457$.  
He also knew that if $a + bi \in \phi_{\Z[i]}^{-1}(9) $, then $\Nm(a+bi) \leq 6457$, 
as every equivalence class in $\Z[i]/(a+bi)\Z[i]$ must have a representative in $A_{\Z[i],8}$.
In order to find $|\phi_{\Z[i]}^{-1}(9) |$, he would have  had to examine each element of norm $\leq 6457$, 
and see if all of their cosets had a representative in $A_{\Z[i], 8}$.

We reduce our study to pairs $a + bi$ such that $a \geq b \geq 0$, as that cuts our search range by approximately a factor of $8$.  
A simple program in SAGE (not available in 1971)
shows that $|\{a+bi \in \Z[i]: 0 \leq b \leq a, \Nm(a+bi) \leq 6457 \} | = 2605$ (see Appendix B's first listing).
We then go through this list and remove all elements that are already in $A_{\Z[i],8}$.  
Appendix B's second program shows there are $842$ elements $a+bi \in A_{\Z[i],8}$ such that 
$0 \leq b \leq a$, so we would have to examine $1763$ elements (see Appendix B's third program).
For each of these $1763$ remaining $a+bi$, we would have to check whether every elment in the associated set 
$S \cup T$ (see Lemma \ref{two_squares}) is congruent to some element of $A_{\Z[i],8}$ modulo $a+bi$.  
This means checking $7476972$ cosets against $6457$ elements.

\subsection{Using Lenstra's Theorem}

Lenstra's Theorem makes it significantly easier to study $|\phi_{\Z[i]}^{-1}(9) |$.
Every element of $A_{\Z[i],9} \setminus A_{\Z[i],8}$ can be written as $u(1+i)^9 +b$ for some $u \in \{ \pm 1, \pm i \}$ and some $b \in A_{\Z[i],8}$.
A simple way to find $|\phi_{\Z[i]}^{-1}(9) |$ would be to compute all $4 \cdot 6457 = 25828$ sums $\{ u(1+i)^9 + b, u \in \Z[i]^{\times}, b \in A_{\Z[i],8} \}$,
remove all dulplicate elements from the list, and then remove any elements that are also in $A_{\Z[i], 8}$.  
There are ways to make the general computation more efficient, but they all involve calculating $\sim |A_{\Z[i],n}|$ sums, where $c$ is a small constant.  
Appendix \ref{Table}'s table shows that this involves exponentially (in $n$) many sums.

\subsection{Explicit Formula}

Computing $|\phi_{\Z[i]}^{-1}(9) |$ is the same as calculating $|A_{\Z[i],9} \setminus A_{\Z[i],8}| = |A_{\Z[i],9}| - |A_{\Z[i],8}|$.
Theorem \ref{octo_union} shows that each $A_{\Z[i],n} \setminus 0$ can be written as a disjoint union of multiples of sets $S_j$, so to find 
$|B_n|$, we need to know $|S_n|$.

\begin{lemma}  For $n \geq 1$, $|S_n| = 3(w_n -2)^2 + 2(w_n -2) -6(w_n - w_{n-1})(w_n - w_{n-1} -1)$.
\end{lemma}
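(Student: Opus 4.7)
The plan is to use complementary counting inside $Oct_n$: since $\gcd(x,y)$ is even iff both $x$ and $y$ are even (with $0$ treated as even), and $0\in Oct_n$ lies in $2\Z[i]$, we get
\[
|S_n| \;=\; |Oct_n| \;-\; |Oct_n \cap 2\Z[i]|.
\]
Thus the task reduces to counting lattice points in two similar octagons, one being a scaled copy of the other.

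Set $M=w_n-2$ and $N=w_{n+1}-3$. The set $Oct_n$ is the axis-aligned integer square $[-M,M]^2$ with four corner triangles $\{|x|+|y|>N\}$ removed; a direct lattice count gives $|Oct_n|=(2M+1)^2-4\binom{2M-N+1}{2}$, provided the four corners are pairwise disjoint, which they are since $N\ge M$. From the closed forms $w_{2k}=3\cdot 2^k$ and $w_{2k+1}=4\cdot 2^k$ one extracts the key identity $w_{n+1}=2w_{n-1}$, so $2M-N=2w_n-w_{n+1}-1=2(w_n-w_{n-1})-1$. For the even part, the map $(x,y)\mapsto(x/2,y/2)$ is a bijection between $Oct_n\cap 2\Z[i]$ and the octagon with parameters $M'=(w_n-2)/2$ and $N'=(w_{n+1}-4)/2$; this is legitimate because $w_n$ is even and $w_{n+1}-3$ is odd for $n\ge 1$. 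The same square-minus-corners formula applies, and $2M'-N'=(2w_n-w_{n+1})/2=w_n-w_{n-1}$, again by $w_{n+1}=2w_{n-1}$.

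Writing $d=w_n-w_{n-1}$, the two counts become $|Oct_n|=(2w_n-3)^2-4d(2d-1)$ and $|Oct_n\cap 2\Z[i]|=(w_n-1)^2-2d(d+1)$. Subtracting, the difference of the square terms factors as $(2w_n-3)^2-(w_n-1)^2=(w_n-2)(3w_n-4)=3(w_n-2)^2+2(w_n-2)$, while the corner contributions collapse to $-4d(2d-1)+2d(d+1)=-6d(d-1)$, which together yield the stated formula. The main obstacle is not conceptual but bookkeeping: verifying the parity statements ($w_n$ even and $w_{n+1}-3$ odd for $n\ge 1$), confirming the identity $w_{n+1}=2w_{n-1}$, and ensuring that the square-minus-corners identity applies correctly in both octagons (i.e.\ that the corners really are disjoint in each). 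The low-$n$ cases where disjointness could fail can be checked directly against the formula.
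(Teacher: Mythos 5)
Your proposal is correct, and it reaches the stated formula by a route that is organized quite differently from the paper's. The paper exploits the eight-fold symmetry to reduce to the axes plus the open first quadrant, then does an inclusion--exclusion inside the quadrant (all points, minus those with both coordinates even, minus those beyond the diagonal cut, the latter split by the parity of $x$) and evaluates everything by explicit arithmetic-series sums. You instead observe that $2\nmid\gcd(x,y)$ fails exactly on $Oct_n\cap 2\Z[i]$ (with $0$ absorbed there), so $|S_n|=|Oct_n|-|Oct_n\cap 2\Z[i]|$, count each octagon as a square minus four disjoint corner triangles, and use the halving bijection $(x,y)\mapsto(x/2,y/2)$ together with $w_{n+1}=2w_{n-1}$ to express both counts in terms of $d=w_n-w_{n-1}$; the algebra $(2w_n-3)^2-(w_n-1)^2=3(w_n-2)^2+2(w_n-2)$ and $-4d(2d-1)+2d(d+1)=-6d(d-1)$ then gives the claim. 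Your version buys a cleaner, more conceptual computation with no parity-split summations; its only extra obligations are the applicability checks you flagged, and these all hold for every $n\ge 1$ rather than needing low-$n$ exceptions: $w_n$ is even and $w_{n+1}-3$ is odd for $n\ge 1$, $N=w_{n+1}-3\ge M=w_n-2$, and for the scaled octagon $N'=(w_{n+1}-4)/2\ge M'=(w_n-2)/2$ because $w_{n+1}-w_n\ge 2$ for all $n\ge 1$ (it equals a power of $2$ that is at least $2$), so the corner triangles are disjoint in both cases and your binomial corner counts are exact. A spot check against the paper's table ($n=1$ gives $21-5=16$, $n=2$ gives $44$) confirms the bookkeeping.
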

\begin{proof}
By symmetry,
\begin{equation*}
\begin{split}
|S_n| = {}& 4 | \{ x \in \Z: 1 \leq x \leq w_n -2, 2 \nmid x\} \\
 &  + 4 | \{ x+yi \in \Z[i]: 1 \leq x,y \leq w_n -2, x + y \leq w_{n+1} - 3, 2 \nmid \gcd(x,y) \}\\
 ={}& 4 \left (\frac{w_n -2}{2} \right ) + 4 | \{x + yi \in \Z[i]: 1 \leq x, y \leq w_n -2; 2 \nmid \gcd (x,y) \}|\\
  &  - 4 |\{x+yi\in \Z[i]: w_{n+1} - 2 \leq x+y; w_{n+1} - w_n \leq x, y\leq w_n -2; 2 \nmid \gcd(x,y) \} |\\
   ={}&   + 4 | \{x + yi \in \Z[i]: 1 \leq x, y \leq w_n -2 \}|\\
   & -4 | \{x + yi \in \Z[i]: 1 \leq x, y \leq w_n -2; 2\mid x; 2 \mid y \}|\\
 &  - 4 \sum_{\mathclap{\substack{x = w_{n+1} - w_n \\x \text{ odd} }}}^{w_n -2} | \{y: w_{n+1} -2 -x \leq y \leq w_n -2 \} | \\
 &   - 4  \sum_{\mathclap{\substack{x = w_{n+1} - w_n \\x \text{ even} }}}^{w_n -2} | \{y: 2 \nmid y, w_{n+1} -2 -x \leq y \leq w_n -2 \}| \\
   ={}& 4 \left (\frac{w_n -2}{2} \right ) + 4 (w_n -2)^2 -4 \left ( \frac{w_n -2}{2} \right )^2 
  - 4 \sum_{\mathclap{\substack{x = w_{n+1} - w_n \\x \text{ odd} }}}^{w_n -2} x - (w_{n+1} - w_n) +1 \\
 &   - \frac{4}{2}  \sum_{\mathclap{\substack{x = w_{n+1} - w_n \\x \text{ even} }}}^{w_n -2} x - (w_{n+1} -w_n) \\
  ={}& 3(w_n -2)^2 + 2(w_n -2)   - 4 \hspace{-.7 cm}\sum_{\mathclap{\substack{x = 0 \\x \text{ odd} }}}^{w_{n+2} -w_{n+1} -3} \hspace{-.7 cm}x - (w_{n+1} - w_n) +1 
  - 2 \hspace{-.7 cm} \sum_{\mathclap{\substack{x = 0 \\x \text{ even} }}}^{w_{n+2} -w_{n+1} -2} \hspace{-.7 cm} x \\
  ={}& 3 (w_n -2)^2 + 2(w_n -2)  -6  \sum_{\substack{ x = 0\\x \text{ even}}}^{\mathclap{w_{n+2} - w_{n+1} -2}} x\\
  ={}& 3 (w_n -2)^2 + 2(w_n -2)  -12  \sum_{x = 0}^{\mathclap{w_n - w_{n-1} -1}} x\\
  ={}& 3 (w_n -2)^2 + 2(w_n -2)  -6 \cdot 2  \sum_{x = 0}^{\mathclap{w_{n} - w_{n-1} -1}} x\\
  ={}& 3 (w_n -2)^2 + 2(w_n -2) -6 (w_n - w_{n-1})(w_n - w_{n-1} -1).
 \end{split}
 \end{equation*}
 \end{proof}
 
 \begin{coro}
 If $n = 2k +1$, $k \geq 0$, then $S_n| = 42 \cdot 4^k - 34 \cdot 2^k + 8$.
 If $n = 2k$, $k \geq 1$, then $|S_n| = 21 \cdot 4^k - 24 \cdot 2^k + 8$.
 \end{coro}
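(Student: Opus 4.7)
The plan is to prove this corollary by direct substitution into the formula of the preceding lemma, namely
\[
|S_n| = 3(w_n - 2)^2 + 2(w_n - 2) - 6(w_n - w_{n-1})(w_n - w_{n-1} - 1),
\]
using the explicit values $w_{2k} = 3\cdot 2^k$ and $w_{2k+1} = 4\cdot 2^k$ from the definition of $w_n$. Since the formula splits naturally along the parity of $n$, I would handle the two cases separately.

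First I would dispose of the convenient observation that in both parities the factor $w_n - w_{n-1}$ simplifies to the same value. For $n = 2k$ with $k\geq 1$, we have $w_{n-1} = w_{2k-1} = 4\cdot 2^{k-1} = 2^{k+1}$, so $w_n - w_{n-1} = 3\cdot 2^k - 2^{k+1} = 2^k$. For $n = 2k+1$ with $k\geq 0$, we have $w_{n-1} = w_{2k} = 3\cdot 2^k$, so again $w_n - w_{n-1} = 4\cdot 2^k - 3\cdot 2^k = 2^k$. Therefore the subtracted term $6(w_n - w_{n-1})(w_n - w_{n-1} - 1)$ equals $6\cdot 2^k(2^k - 1) = 6\cdot 4^k - 6\cdot 2^k$ in both cases.

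Next I would plug $w_n = 3 \cdot 2^k$ into the first two terms for the even case, expand $3(3\cdot 2^k - 2)^2 + 2(3\cdot 2^k -2)$ to get $27 \cdot 4^k - 30 \cdot 2^k + 8$, and then subtract $6\cdot 4^k - 6\cdot 2^k$ to obtain $21\cdot 4^k - 24\cdot 2^k + 8$, matching the claimed formula. For the odd case, I would substitute $w_n = 4\cdot 2^k$, expand $3(4\cdot 2^k -2)^2 + 2(4\cdot 2^k -2)$ to get $48\cdot 4^k - 40\cdot 2^k + 8$, and again subtract $6\cdot 4^k - 6\cdot 2^k$ to obtain $42\cdot 4^k - 34\cdot 2^k + 8$, as claimed.

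The proof is entirely computational and presents no real obstacle beyond careful bookkeeping of the constants in the expansions of $(3\cdot 2^k - 2)^2$ and $(4\cdot 2^k - 2)^2$. The only structural remark worth making in the writeup is the unifying observation that $w_n - w_{n-1} = 2^k$ in both parities, which explains why the subtracted term has the same shape and guarantees that the leading coefficient simply drops by $6$ from $27$ to $21$ and from $48$ to $42$ between the raw expansion and the final answer.
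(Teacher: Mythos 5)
Your computation is correct, and it is exactly the intended argument: the paper gives no separate proof for this corollary, leaving it as the direct substitution of $w_{2k}=3\cdot 2^k$, $w_{2k+1}=4\cdot 2^k$ into the lemma's formula, which is what you carry out (and your arithmetic, including the observation $w_n - w_{n-1}=2^k$ in both parities, checks out against the table values such as $|S_2|=44$ and $|S_3|=108$).
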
  
 
 We can now use our formula for $|S_n|$ to find $|A_{\Z[i],n}|$.
 
 \begin{theorem}\label{pre-image_cardinality}  For all $k \geq 0$, $|A_{\Z[i], 2k+1}| = 14 \cdot 4^{k+1} - 34 \cdot 2^{k+1} + 8k + 29$.  
 For $k \geq 1$, $|A_{\Z[i], 2k}| = 28 \cdot 4^{k} - 48 \cdot 2^{k} + 8k + 25$.
 \end{theorem}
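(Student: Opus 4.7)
The plan is to combine Theorem \ref{main_result}, which identifies $A_{\Z[i],n}$ with $B_n$, Theorem \ref{octo_union}, which gives the disjoint decomposition $B_n \setminus 0 = \coprod_{j=0}^{\lfloor n/2 \rfloor} 2^j S_{n-2j}$, and the preceding corollary, which supplies closed forms for $|S_n|$. Since multiplication by a nonzero element of $\Z[i]$ is injective, $|2^j S_{n-2j}| = |S_{n-2j}|$, and including the single element $0$ yields
\[
|A_{\Z[i],n}| \;=\; 1 + \sum_{j=0}^{\lfloor n/2 \rfloor} |S_{n-2j}|.
\]
The rest of the argument is a case split on the parity of $n$, followed by summation of geometric and arithmetic series.

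For the odd case $n = 2k+1$ (with $k \geq 0$), I would reindex via $i = k-j$ so that the sum becomes $\sum_{i=0}^{k} |S_{2i+1}|$, and then substitute $|S_{2i+1}| = 42 \cdot 4^i - 34 \cdot 2^i + 8$. The three resulting sums are standard: a geometric series with ratio $4$, a geometric series with ratio $2$, and a constant. Collecting the closed forms with the leading $1$ gives $14 \cdot 4^{k+1} - 34 \cdot 2^{k+1} + 8k + 29$, matching the stated expression.

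For the even case $n = 2k$ (with $k \geq 1$), the analogous reindexing produces $\sum_{i=0}^{k} |S_{2i}|$, but now the $i = 0$ term is $|S_0|$, which lies outside the range where the corollary's formula $|S_{2i}| = 21 \cdot 4^i - 24 \cdot 2^i + 8$ is valid. I would therefore peel off $|S_0|$, which a direct check shows equals $4$ (the four units), apply the general formula for $i = 1, \ldots, k$, and sum the two geometric series and one arithmetic series as before. Combining all contributions (the leading $1$, the $4$ from $|S_0|$, and the closed-form sum) yields $28 \cdot 4^k - 48 \cdot 2^k + 8k + 25$.

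There is no substantive obstacle: the work is entirely a bookkeeping exercise in summing finite geometric series. The only place requiring care is the even case, where the lower index of the sum produces a term for $|S_0|$ that falls outside the domain of validity of the formula proved in the corollary and must be evaluated separately. Everything else is routine algebraic simplification, and one can verify the result against the small cases computed in Example \ref{example_in_G} (giving $|A_{\Z[i],1}| = 17$ and $|A_{\Z[i],2}| = 49$) as a sanity check.
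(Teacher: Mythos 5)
Your proposal is correct and follows essentially the same route as the paper: identify $|A_{\Z[i],n}|$ with $1 + \sum_{j=0}^{\lfloor n/2\rfloor}|S_{n-2j}|$ via Theorem \ref{octo_union}, substitute the closed forms for $|S_{2i+1}|$ and $|S_{2i}|$, and sum the geometric and constant terms, handling $|S_0|=4$ separately in the even case exactly as the paper does. No gaps; the arithmetic checks out against the stated formulas.
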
 
 \begin{proof}
 Theorem \ref{octo_union} shows that $A_{\Z[i],n} \setminus 0 = \coprod_{j=0}^{n/2} 2^j S_{n -2j}$, so \\
 $|A_{\Z[i],n}|= 1 + \sum_{j=0}^{n/2} |S_{n-2j}|$.  
 Therefore 
 \begin{equation*}
 \begin{split}
 |A_{\Z[i],2k}| &= 1 + |S_0| + \sum_{j=1}^k |S_{2j}|\\
 &=5 + \sum_{j=1}^k 21 \cdot 4^j - 24 \cdot 2^j + 8 \\
 &= 5 + \sum_{j=0}^{k-1} 84 \cdot 4^j - 48 \cdot 2^j+ 8\\
 &=84 \left (\frac{ 4^{k}-1}{3} \right ) - 48 \cdot 2^{k} + 8k + 53 \\
 &= 28 \cdot 4^{k} - 48 \cdot 2^{k} + 8k + 25
 \end{split}
  \end{equation*}
 and 
 \begin{equation*}
 \begin{split}
 |A_{\Z[i],2k +1}| &= 1  + \sum_{j=0}^k |S_{2j +1}|\\
 & = 1 + \sum_{j=0}^k 42 \cdot 4^j - 34 \cdot 2^j +8\\
 & = 1 + \left (42 \left (\frac{ 4^{k+1}-1}{3} \right ) - 34 \cdot 2^{k} + 8k \right ) \\
 & = 14 \cdot 4^{k+1} - 34 \cdot 2^{k+1} + 8k + 29.
 \end{split}
 \end{equation*}
 \end{proof}
 
 This naturally leads to Theorem \ref{size_of_sets}.  
 \begin{proof} (Of Theorem \ref{size_of_sets}).  
Applying Theorem \ref{pre-image_cardinality} reveals that
 \begin{align*}
 |\phi_{\Z[i]}^{-1}(2k)| & = | A_{\Z[i], 2k} \setminus A_{\Z[i], 2k-1}|\\
 & = | A_{\Z[i], 2k}| - |A_{\Z[i],2k -1}|\\
 & = (28 \cdot 4^{k} - 48 \cdot 2^{k} + 8k + 25) - (14 \cdot 4^{k} - 34 \cdot 2^{k} + 8(k-1) + 29 )\\
 & = 14 \cdot 4^k -14 \cdot 2^k +4\\
 \intertext{ and }
 |\phi_{\Z[i]}^{-1}(2k+1)| & = | A_{\Z[i], 2k+1} \setminus A_{\Z[i], 2k}|\\
 & = | A_{\Z[i], 2k+1}| - |A_{\Z[i],2k }|\\
 & = (14 \cdot 4^{k+1} - 34 \cdot 2^{k+1} + 8k + 29) - (28 \cdot 4^{k} - 48 \cdot 2^{k} + 8k + 25)\\
 & = 28 \cdot 4^k - 20 \cdot 2^k +4.
 \end{align*}
 \end{proof}
 
 \newpage 
 \appendix
 \section{Table}\label{Table}
 
 \begin{table}[h!]
 \centering
 \begin{tabular}{|c c c c|}
 \hline
 $n$ & $|S_n|$ & $|B_n|$ & $|\phi_{\Z[i]}^{-1}(n)|$\\
 \hline 
 0& 4&				5&			5\\
 1& 16&				17&			12\\
 2& 44&				49&			32\\
 3& 108&			125&		76\\
 4& 248& 			297&		172\\
 5& 544&			669&		372\\
 6& 1160&			1457&		788\\
 7& 2424& 			3093&		1636\\
 8& 5000&			6457&		3364\\
 9& 10216&			13309&		6852\\
 10& 20744&			27201&		13892\\
 11& 41928&			55237&		28036\\
 12& 84488&			111689&		56452\\
 13& 169864&		225101&		113412\\
 14& 341000&		452689&		227588\\
 15& 683784&		908885&		456196\\
 16&  1370120&		1822809&	913924\\
 17& 2743816&		3652701&	1829892\\
 18& 5492744&		7315553&	3662852\\
 19& 10992648&		14645349&	7329796\\
 20& 21995528&		29311081&	14665732\\
 21& 44005384&		58650733&	29339652\\
 22& 88031240 &		117342321&	58691588\\
 23& 176091144 &	234741877&	117399556\\
 24& 352223240 &	469565561&	234823684\\
 25& 704503816 &	939245693&	469680132\\
 \hline
 \end{tabular}
 \caption{$|S_n|$, $|B_n|$, and $|\phi_{\Z[i]}^{-1}(n)|$ for $n \in [0,25]$}
 \label{table:1}
\end{table}

\section{Code}\label{Code}
Finding the set of $x +yi$ pairs where $0 \leq y \leq x , x^2 + y^2 \leq 6456$:\\
\begin{lstlisting}[numbers=left,numberstyle=\tiny,numbersep=0pt]
	sage: Norm_set =[]
	sage: for x in [0,..,80]:  
			f = min(x,floor(sqrt(6457-x^2)))  
			for y in [0,..,f]:
				Norm_set.append((x,y))  
	sage:  len(Norm_set)
\end{lstlisting}   

Here is the code to compute the elements in $B_8$ such that $0 \leq b \leq a$.
Recall that 
$B_8 = S_8 \cup 2 \cdot S_6 \cup 4\cdot S_4 \cup 8\cdot S_2 \cup 16\cdot S_0.$
We compute the part of $16 \cdot S_0$ such that $0 \leq b \leq a $, 
and then repeat with $8 \cdot S_2$, etc. \\
\begin{lstlisting}[numbers=left,numberstyle=\tiny,numbersep=0pt]
sage: B=[(0,0)]
	
sage: for x in [1,..,1]:
		for y in [0,..,x]:
			if (gcd(x,y))%2==1:
				if (x +y) <=1:
					B.append((16*x,16*y))
sage: for x in [1,..,4]:
		for y in [0,..,x]:
			if (gcd(x,y))%2==1:
				if (x +y) <=5:
					B.append((8*x,8*y))
sage: for x in [1,..,10]:
		for y in [0,..,x]:
			if (gcd(x,y))%2==1:
				if (x +y) <=13:
					B.append((4*x,4*y))
sage: for x in [1,..,22]:
		for y in [0,..,x]:
			if (gcd(x,y))%2==1:
				if (x +y) <=29:
					B.append((2*x, 2*y))
sage: for x in [1,..,46]:
		for y in [0,..,x]:
			if (gcd(x,y))%2==1:
				if (x +y) <=61:
					B.append((x,y))
sage: print(len(B))
\end{lstlisting}

Computing the number of elements $x +yi$ pairs where $0 \leq y \leq x , x^2 + y^2 \leq 6456$ and $x +yi $ is not in $B_8$:\\
\begin{lstlisting}[numbers=left,numberstyle=\tiny,numbersep=0pt]
sage: Norm_List=[]  
sage: for x in [0,..,len(Norm_set)-1]:
		if (Norm_set[x]  in B)==0:
			Norm_List.append(Norm_set[x])    
sage: print(len(Norm_set)- len(B))
\end{lstlisting}

Computing the sizes of the sets $S_n$ for $0 \leq n \leq 21$:\\
\begin{lstlisting}[numbers=left,numberstyle=\tiny,numbersep=0pt]
sage: S_size=[4, 16]
sage: for n in [1,..,12]:
		S_size.append(21*4^n - 24*2^n + 8)
		S_size.append(42*4^n - 34*(2^n) +8)
sage: print(S_size)
\end{lstlisting}

Computing the sizes of the sets $B_n$ for $0 \leq n \leq 21$:\\
\begin{lstlisting}[numbers=left,numberstyle=\tiny,numbersep=0pt]
sage:	B_size=[0,..,21]
sage:	B_size=[0,..,21]
sage:	for n in [0,..,10]:
			B_size[2*n]=1
			for m in [0,..,n]:
				B_size[2*n] = B_size[2*n] + S_size[2*m]
			B_size[2*n +1]=1
				for m in [0,..,n]:
					B_size[2*n+1] = B_size[2*n +1]
									+ S_size[2*m +1]
sage: 	print(B_size)
\end{lstlisting}

Checking that the formula does give us the size of $B_n$ for $0 \leq n \leq 21$:\\
\begin{lstlisting}[numbers=left,numberstyle=\tiny,numbersep=0pt]
sage: formula_list=[0,..,25]
sage: formula_list[0]=5
sage: formula_list[1] =17
sage: for n in [1,..,12]:
		formula_list[2*n] = (28)*(4^(n)) - 48*2^(n) + 8*n +25
		formula_list[2*n +1] = (14)*(4^(n+1)) - 34*2^(n +1) 
								+8*n +29
sage: print(formula_list)        
\end{lstlisting}

Computing the sizes of the sets $\phi_{\Z[i]}^{-1}n$ for $1 \leq n \leq 21$:\\
\begin{lstlisting}[numbers=left,numberstyle=\tiny,numbersep=0pt]
sage:	B_size=[0,..,21]
sage:	for n in [0,..,10]:
			B_size[2*n]=1
			for m in [0,..,n]:
				B_size[2*n] = B_size[2*n] + S_size[2*m]
			B_size[2*n +1]=1
				for m in [0,..,n]:
					B_size[2*n+1] = B_size[2*n +1] 
					+ S_size[2*m +1]
sage: 	print(B_size)
\end{lstlisting}

Checking that the formula does give $\phi_{\Z[i]}^{-1}n$ for $2 \leq n \leq 21$:\\
\begin{lstlisting}[numbers=left,numberstyle=\tiny,numbersep=0pt]
sage:	for n in [1,..,12]:
			print(2*n, 14*4^n - 14*2^n +4)
			print(2*n+1, 28*4^n - 20*2^n + 4)
\end{lstlisting}

\section*{Acknowledgements}
I would like to thank my very patient spouse, Loren LaLonde, who has listened to me talk about this problem for the last fifteen years, and who has ensured that my LaTeX always compiled.

I greatly appreciate the help from Michael Bridgland, who not only read Martin Fuch's thesis \cite{Fuchs} for me (I don't speak German), 
but who also asked a question that led to Lemma \ref{max_power}.  
My frequent collaborator Jon Grantham gave me very useful comments on an earlier draft, leading to both the history and application sections.  

I would also like to thank H.W. Lenstra, Jr. for his helpful feedback and Franz Lemmermeyer for his help with this paper's background research and 
literature review.  I highly recommend Lemmermeyer's survey, ``The Euclidean Algorithm in Algebraic Number Fields,'' to anyone interested in the subject \cite{Lemmermeyer}.


\begin{thebibliography}{99}

\bibitem{Fuchs} M. Fuchs,
\newblock{``Der Minimale Euklidische Algorithmus im Ring der ganzen Gausschen Zahlen,''}
\newblock{Munich}, 2003.
\newblock{http://www.mafu.ws/papers/studienarbeit.pdf}

\bibitem{Graves} H. Graves,
\newblock{``The Minimal Euclidean Function on the Gaussian Integers,''} submitted.
\newblock{{\tt arXiv:2110.13112 [math.FA]}}


\bibitem{Lemmermeyer} F. Lemmermeyer,
\newblock{``The Euclidean Algorithm in Algebraic Number Fields,''} 2004.
\newblock{http://www.rzuser.uni-heidelberg.de/~hb3/publ/survey.pdf}

\bibitem{Lenstra} H.W. Lenstra, Jr.,
\newblock{``Lectures on Euclidean Rings,''}
\newblock{Bielefield}, 1974.

\bibitem{Motzkin} T. Motzkin, 
\newblock{``The Euclidean Algorithm,''}
\newblock{ \sl Bull. Am. Math. Soc}, 55 (1949), 1142-1146.

\bibitem{Samuel} P. Samuel,
\newblock{``About Euclidean Rings,''}
\newblock{ \sl J. Algebra}, 19 (1971), 282-301.  

 
 
 
 
 

\end{thebibliography}
\end{document}